\theoremstyle{plain}
\newtheorem{thrm}{Theorem}
\newtheorem{prop}{Proposition}
\newtheorem{lemma}{Lemma}
\newtheorem{coro}{Corollary}
\theoremstyle{definition}
\newtheorem{dftn}{Definition}
\theoremstyle{remark}
\newcommand\R{\mathbb R}
\newcommand\C{\mathbb C}
\newcommand\N{\mathbb N}
\newcommand\X{\mathbb X}
\newcommand\Pb{\mathbb P}
\newcommand\E{\mathbb E}
\newcommand\Var{\operatorname{Var}}
\newcommand\Leb{\operatorname{Leb}}
\newcommand\Card{\operatorname{Card}}
\newcommand\diam{\operatorname{diam}}
\newcommand\sign{\operatorname{sign}}
\newcommand\argmin{\operatorname{argmin}}
\DeclarePairedDelimiter\floor\lfloor\rfloor
\newcommand\s{\sigma}
\newcommand\la{\lambda}
\newcommand\1[1]{\mathbf{1}\mathopen{}\left(#1\right)}
\newcommand\norm[1]{\lVert #1 \rVert}
\newcommand\va[1]{\left\lvert #1 \right\rvert}
\newcommand\sva[1]{\lvert #1 \rvert}
\newcommand\vab{\sva{V^d}}
\newcommand\meanm{\frac{1}{m}\sum_{j=1}^m}
\newcommand\meann{\frac{1}{n}\sum_{i=1}^n}
\newcommandx\ik[1][1=k]{\hat i_{#1}(x)}
\newcommand\ta{\hat{\tau}_k}
\newcommand\tal{\hat{\tau}_{\ell}(x)}
\newcommandx\ieu[1][1=n]{\llbracket 1; #1 \rrbracket}
\newcommandx\cvg[3][1=, 2=n, 3=\infty]{\xrightarrow[#2 \to #3]{#1}}
\newcommandx\cvl[1][1=n]{\xRightarrow[#1\to\infty]{\mathcal L}}
\newcommand\pinf{\underline p}
\newcommand\psup{\bar p}
\newcommand\qsup{\bar q}
\newcommand{\lr}[3]{\left #1 #3 \right #2}  %
\newcommand{\mtlr}[3]{\mathopen{}\left #1 #3 \right #2}  %
\newcounter{subsubfigure}
\begin{document}

\title{Convergence rate for Nearest Neighbour matching: geometry of the domain and higher-order regularity}

\author{
Simon Viel\thanks{Univ Rennes, Ensai, CNRS, CREST -- UMR 9194, F-35000 Rennes, France, (email: 
\texttt{simon.viel@ensai.fr}).},\quad
Lionel Truquet\thanks{Univ Rennes, Ensai, CNRS, CREST -- UMR 9194, F-35000 Rennes, France (email: 
\texttt{lionel.truquet@ensai.fr}).},\quad
Ikko Yamane\thanks{Univ Rennes, Ensai, CNRS, CREST -- UMR 9194, F-35000 Rennes, France (email: 
\texttt{ikko.yamane@ensai.fr}).}
}

\maketitle
\begin{abstract}
\noindent

Estimating some mathematical expectations from partially observed data and in particular missing outcomes is a
central problem encountered in numerous fields such as transfer learning, counterfactual analysis or causal 
inference.  
\emph{Matching estimators}, estimators based on $k$-nearest neighbours, are widely used in this context.
It is known that the variance of such estimators can converge to zero at a parametric rate, but their bias can have a
slower rate when the dimension of the covariates is larger than $2$. This makes analysis of this bias particularly 
important. In this paper, we provide higher order properties of the bias. In contrast to the existing literature related to this problem, 
we do not assume that the support of the target distribution of the covariates is strictly included in that of the 
source, and we analyse two geometric conditions on the support that avoid such boundary bias problems. We show 
that these conditions are much more general than the usual convex support assumption, leading to an improvement of 
existing results. Furthermore, we show that the matching estimator studied by Abadie and Imbens (2006) for the 
average treatment effect can be asymptotically efficient when the dimension of the covariates is less than $4$, a
result only known in dimension $1$.
\end{abstract}
\begin{IEEEkeywords}
$k$-nearest neighbour estimators, transfer learning, average treatment effect, boundary bias
\end{IEEEkeywords}

\section{Introduction}\label{intro}

Estimating the expectation of a function of variables from a sample with some missing observations has attracted 
much effort in the statistical literature and $k$-nearest neighbours ($k$-NN) estimators are often useful in this 
context. In many interesting situations, these expectations concern pairs of random variables, covariates and a 
label, for which some labels are missing or partially missing in the data set. In the field of transfer learning 
or domain adaptation, the so-called covariate shift~\citep{shimodaira2000improving} arises in many real-world
situations such as natural language recognition \citep{bickel2006dirichlet}; \citep{jiang2007instance}, 
brain-computer interfaces \citep{sugiyama2007covariate}; \citep{li2010application}, pharmaceutics 
\citep{klarner2023drug}, and econometrics \citep{heckman1979sample}. In a simple but versatile formulation, a 
central task common in these applications is to estimate the expectation of a random variable of the form 
$Z^*:=h(X^*,Y^*)$ using only a sample of the unlabelled covariates $X^*$ and another training sample of the 
labelled pair $(X,Y)$, for which the two conditional distributions of $Y\vert X$ and $Y^*\vert X^*$ are identical 
while the covariates have different probability distributions, i.e., $\Pb_X\neq\Pb_{X^*}$. An important case 
arises in supervised learning when $h(x,y):=l(f(x),y)$ for a hypothesis function $f$ and a loss function $l$. In 
this case, our problem becomes risk estimation, which is central for empirical risk minimisation. When $P\equiv 
\Pb_X$ and $Q\equiv\Pb_{X^*}$ are absolutely continuous with respect to a reference measure, an expectation of 
$Z^*$ under $Q$ can be written as an expectation of $Z:=h(X,Y)$ under $P$ weighted by the density ratio $q/p$, 
where $p$ and $q$ denote respectively the densities of $P$ and $Q$ with respect to the reference measure.
\citet{shimodaira2000improving} considered such a reweighting for parametric maximum likelihood estimation in a 
mispecified case and some classical references discuss the use of linear-in-parameter models or the Reproducing 
Kernel Hilbert Space (RKHS) framework to estimate $q/p$. See for example \citet{gretton2009covariate}, 
\citet{kanamori2009least}, or \citet{sugiyama2008direct}.

Another important problem is the estimation of treatment effects in biostatistics or econometrics.
In these domains, a sample of triplets of random variables $(X,Y,W)\in\R^d\times\R\times\{0,1\}$ is observed. 
Using hidden random variables $Y(0)$ and $Y(1)$ called \emph{potential outcomes}, the observed response $Y$ is 
assumed to be either distributed as $Y(1)$ (if $W=1$, indicating that the corresponding individual is in the 
treated group) or distributed as $Y(0)$ (if $W=0$, indicating that the corresponding individual is in the control 
group). Only one of the two potential outcomes is observed for a given individual. The aim is then to estimate the 
expectation of the difference $Y(1)-Y(0)$, called the average treatment effect (ATE), conditional or unconditional 
on the treatment. The use of $k$-NN estimators has been widely studied in this context. Typically, the unobserved 
response $Y(1)$ or $Y(0)$ is replaced by an average of the outcomes corresponding to the $k$-NN covariate vectors 
in the other group. See for example Abadie and Imbens (\citeyear{abadie2006large}, \citeyear{abadie2008failure},
\citeyear{abadie2011bias}, \citeyear{abadie2012martingale}), or \citet{lin2023estimation}.
When $k$ is moderate, these estimators are often referred to as matching estimators.
This $k$-NN approach can be naturally translated to covariate shift adaptation and interpreted 
as density ratio estimators. This connection between matching and density ratio estimation has recently been
discussed in \citet{lin2023estimation}.

In a series of papers addressing the estimation of \emph{average treatment effects}, a problem related to ours,
Abadie and Imbens (\citeyear{abadie2006large}, \citeyear{abadie2008failure}, \citeyear{abadie2011bias}, 
\citeyear{abadie2012martingale}) introduced nearest-neighbour matching estimators and established several 
asymptotic results. \citet{lin2023estimation} further studied the properties of their matching estimators, and 
they showed, by letting the number of neighbours $k$ grow as the sample size increases under some relation, that 
the normalised matching estimator was consistent for the estimation of the density ratio.
Although we do not restrict ourselves to $k$ following this relation---for instance we allow the parameter $k$ to
remain constant---and therefore we lose the consistency result for the density ratio estimation in
\citet{lin2023estimation}, we show in this paper that the final risk estimator using NN-matching is still consistent.

Recently, $k$-NN estimators going beyond the approach of density ratio estimation have been proposed in the context 
of covariate shift adaptation~\citep{loog2012nearest, portier2023scalable, holzmann2024multivariate}.
The $k$-NN method that appeared in \citet{loog2012nearest} does not produce an estimate of the density ratio, but it 
was shown to have properties similar to the matching estimators~\citep{portier2023scalable}.
\citet{holzmann2024multivariate} extended the $k$-NN matching estimators by giving weights that can incorporate 
higher order smoothness.

Although our notation focuses mainly on covariate shift adaptation, both problems can be described in a similar 
framework. To estimate the expectation $e(h):=\E[h(X^*,Y^*)]$, we observe a sample $(X_i,Y_i),\,1\leq i\leq n$, 
of i.i.d. random vectors taking values in $\R^d\times\R$ as well as an additional sample of unlabelled i.i.d.
random variables $X^*_j,\,1\leq j\leq m$. We denote by $\X$ the measurable subset of $\R^d$ on which $X$ and $X^*$
take their values and consider the Euclidean norm $\norm\cdot$ on $\R^d$. We will discuss some statistical
properties of two estimators. The first one, introduced in \citet{portier2023scalable}, produces labels 
$\hat Y_j^*$ which are independent conditionally on the $(X_i,Y_i)$'s and the $X_j^{*}$'s and with a conditional 
distribution estimating $Y_j^{*}\vert X_j^{*}$. More precisely, denote by $\mathcal F_{m,n}=\sigma\left(X_i,Y_i,
X_j^*; 1\leq i\leq n,1\leq j\leq m\right)$ the sigma-algebra generated by those random variables. Their estimator 
is \begin{equation}\label{first}
\hat e_1(h):=\frac 1m\sum_{j=1}^mh\left(X_j^*,\hat Y_j^*\right),\quad\Pb\left(\hat Y_j^*=Y_i\mid\mathcal F_{m,n}
\right):=k^{-1}\1{\norm{X_i-X_j^*}\leq\ta(X_j^*)},
\end{equation}
where $\ta(x)$ is the $k$-nearest neighbour radius, that is, the $k$th-order statistic of the sample 
$\left(\norm{X_i-x}\right)_{1\leq i\leq n}$ for $x\in\X$ and $1\leq k\leq n$.
The second, which is similar to the matching estimators used to infer average treatment effects, is defined by 
\begin{equation}\label{second}
\hat e_2(h):=\frac 1n\sum_{i=1}^n\frac{nM_k^*(X_i)}{mk}\,h\left(X_i,Y_i\right),\quad M_k^*(x):=\sum_{j=1}^m
\1{\norm{x-X_j^*}\leq\ta(X_j^*)}.
\end{equation}

The estimator~\eqref{second} has recently been extended by \citet{holzmann2024multivariate} to a higher-order 
local polynomial estimator of the regression function $g\colon\X\to\R$ defined by
$$g(x):=\E[h(X,Y)\mid X=x]=\E[h(X^*,Y^*)\mid X^*=x].$$
It can be observed that $e(h)=\int_\X g(x)\,\mathrm dQ(x)$, which is simply the expectation of the regression 
function $g$ with respect to the target probability measure $Q$. Averaging the values of non-parametric estimates 
of the regression function evaluated with an additional sample of random variables with probability distribution 
$Q$ is then a natural idea. Note that the estimators~\eqref{first} and \eqref{second} are not very different. 
The estimator~\eqref{first} is the empirical mean $m^{-1}\sum_{j=1}^mh(X_j^{*},\hat Y_j^*)$ with $\hat Y_j^*$
following the conditional distribution
$$\hat\Pb_{Y\vert X}\left(\mathrm dy\mid X_j^*\right)=\frac 1k\sum_{i=1}^n\1{\norm{X_i-X_j^*}\leq\ta(X_j^*)}
\delta_{Y_i}(\mathrm dy),$$
while the estimator~\eqref{second} equals $m^{-1}\sum_{j=1}^m\hat g(X_j^*)$, where $\hat g_n(x)=\int_\X z\,
\hat\Pb_{Z\vert X}(\mathrm dz\vert x)$ and
$$\hat\Pb_{Z\vert X}\left(\mathrm dz\vert X_j^*\right)=\frac 1k\sum_{i=1}^n\1{\norm{X_i-X_j^*}\leq\ta(X_j^*)}
\delta_{h(Y_i,X_i)}(\mathrm dz).$$
Notice that $\hat g_n(x)$ is the standard $k$-NN nonparametric regression estimator. As we will see, the 
bias/variance of the estimators~\eqref{first} and \eqref{second} have the same convergence rates under quite 
similar regularity assumptions.
 
As is now well known in this literature, estimators constructed from an averaging of $k$-NN estimators of a 
regression function often have a variance that converges to $0$ at a parametric rate even when $k$ is small, $k=1$ 
being a standard choice. Then the regression function needs not to be consistently estimated to get this result.
This rate for the variance has also been obtained in some references that focus on estimating some expectations of 
a regression function in a different context. For example, \citet{Dev1} and \citet{Dev2} studied the estimation of 
some mean squared errors in non-parametric estimation using two identically distributed samples, from which an 
averaging of $1$-NN estimators leads to a variance converging to $0$ at a parametric rate. Estimators of some 
integrals of a probability density, useful for estimating entropies or divergences, have also been studied in 
\citet{sricharan2012estimation} and \citet{singh2016finite}, with a similar convergence rate for the variance.

The quality of these matching-type estimators then depends on their bias, the convergence rate of which generally
depends on the dimension of the covariate vector. If the regression function $g$ has Lipschitz properties on a 
bounded domain $\X$, the bias of the estimators (\ref{first}) or (\ref{second}) can be bounded by $k^{1/d}/
n^{1/d}$, up to a constant. However, if $g$ is two times continuously differentiable, one could expect the rate 
$k^{2/d}/n^{2/d}$, but as we will see, obtaining this rate requires a geometric analysis of the boundary of the 
domain $\X$ to avoid boundary bias problems. In the literature, this faster bias rate has been obtained by 
\citet[Theorem~2]{abadie2006large}, or using the local polynomial extension of the estimator~\eqref{second} 
recently studied by \citet{holzmann2024multivariate}, but assuming that the support of the measure $Q$ is 
contained in a compact subset of the interior of a convex domain $\X$, a simple condition that avoids boundary 
bias problems. In information theory, such a condition means that the relative entropy of $P$ with respect to $Q$, 
measured by the Kullback-Leibler divergence, is automatically infinite, and it excludes two probability measures 
with the same support. For the study of standard ATE, the two distributions $P$ and $Q$ of the covariates 
conditional on treatment or conditional on non-treatment are assumed to be mutually absolutely continuous, and 
such a condition on the support is not satisfied. 
 
The contribution of this paper is threefold. \begin{enumerate}
\item First, we show that under some suitable conditions on the domain $\X$ and smoothness conditions, the 
estimators (\ref{first}) and (\ref{second}) have a bias of order $k^{2/d}/n^{2/d}$. This shows that the parameter 
$e(h)$ can be estimated at a parametric rate when $d\leq 4$, using only $k=1$, a choice that leads to fast 
computation for the numerical implementation of these estimators. For ATE, we show that the matching estimators 
studied in \citet{abadie2006large} can be efficient in a semi-parametric sense when $d\leq 3$. We then obtain an 
extension of Corollary $4.1$ in \citet{lin2023estimation}, which was only stated for the univariate case $d=1$.
\item
We analyse two geometric conditions that avoid the bias boundary problem. These conditions are generally satisfied
if the boundary of the domain $\X$ is sufficiently smooth or for a finite union of well-shaped subsets of $\R^d$ 
such as convex sets.
\item We show that such geometric conditions are also sufficient to avoid the boundary bias problem of the $k$-NN
local polynomial estimator of \citet{holzmann2024multivariate}, who recently studied a very nice extension of 
matching estimators for obtaining parametric convergence rates for estimating $g$, under suitable smoothness 
conditions on $g$. Such an extension provides an alternative to the doubly robust estimators studied in 
\citet{lin2023estimation}, for which additional hyperparameters have to be tuned.
\end{enumerate}
The paper is organised as follows. In Section~\ref{S2}, we analyse the second-order properties of the bias for the
estimators~\eqref{first} and \eqref{second} using appropriate geometric conditions on the support of the 
covariates. We also give some consequences for the estimation of ATE in Section~\ref{ate}. A discussion of 
various sufficient conditions for checking our geometric conditions is given in Section~\ref{geometry}. In 
Section~\ref{local} we show that such conditions are sufficient to control higher order properties of the bias 
for a local polynomial extension of the previous estimates. Numerical experiments are given in
Section~\ref{numerical}. A conclusion is given in Section \ref{conclusion} and proofs of our results are given in 
Section \ref{proofs}. Finally, some technical lemmas are collected in an Appendix Section.

\section{Second-order bias property of matching estimators}\label{S2}

\subsection{Notation and assumptions}\label{assumptions}

For any $r\geq 0$ and $x\in\R^d$, let $B(x,r):=\left\{x'\in\R^d:\norm{x'-x}\leq r\right\}$ be the closed ball 
of centre $x$ and radius $r,\,S(x,r)$ be the sphere of center $x$ and radius $r$, $S^{d-1}:=S(0,1)$ be the unit
sphere, and for a given Borel subset $A$ of $\R^d$ we denote by $\1 A$ the indicator function which takes the 
value $1$ on $A$ and $0$ on its complement. The conditional bias of estimators (\ref{first}) and (\ref{second}) is
defined as $$B_{i,n}:=\E[\hat e_i(h)\mid X]-e(h),$$ 
for $i=1,2$ respectively. We first describe the bias problem due to boundary issues. For simplicity, we only 
consider the estimator~\eqref{first} (the case $i=1$). The bias for the second estimator behaves in a similar way. 
For $\ell=1,\dots,k$, let us denote by $\ik[\ell]$ the index of the $\ell$th-nearest neighbour of $x\in\X$ in 
the sample $(X_1,\dots,X_n)$. Let us also set $\Delta(x,u):=\E[h(x,Y_1)\vert X_1=u]$. We then have $\Delta(x,x)=
g(x)$ and assuming that the mapping $\Delta$ is two times continuously differentiable with respect to its second 
argument, we could expect that
$$B_{1,n}=k^{-1}\sum_{\ell=1}^k\int_\X\left[\Delta\left(x,X_{\ik[\ell]}\right)-g(x)\right]\,\mathrm dQ(x)
\approx k^{-1}\sum_{\ell=1}^k\int_\X\nabla_2\Delta(x,x)^\top Z_\ell(x)\,\mathrm dQ(x),$$
where $Z_\ell(x):=X_{\hat i_\ell(x)}-x$ and $\nabla_2\Delta(x,x)$ denotes the gradient of the mapping $u\mapsto
\Delta(x,u)$ with respect to its second argument evaluated at point $x$.
Note that $\norm{Z_\ell(x)}=\norm{X_{\ik[\ell]}-x}$ can be always bounded by $\ta(x)$, which is of order 
$(k/n)^{1/d}$, using Lemma~\ref{moments_tau}. This leads to the same rate for the bias.

However, one can get a better rate for the expectation of the vector $Z_\ell(x)$ if we restrict it to the event
$\{\tal\leq\delta(x)\}$, where $\delta(x)$ denotes the distance between $x$ and the complement $\X^c$ of $\X$. On
this event, the ball centered at $x$ with radius $\tal$ is included in $\X$. Moreover, conditionally on the radius
$\tal$, the normalized vector $Z_\ell(x)/\tal$ has a density, supported on $S^{d-1}$, given by $\theta\mapsto
p(x+\tal\theta)$, up to a normalizing constant denoted here by $\hat C_\ell(x)$. We then deduce that 
\begin{multline*}
\E[Z_\ell(x)\1{\tal\leq\delta(x)}] \\ =\E\left[\int_{S^{d-1}}\hat C_\ell(x)\,\tal\,\theta\,(p(x+\tal\theta)-p(x))
\,\mathrm d\s(\theta)\,\1{\tal\leq\delta(x)}\right],
\end{multline*}
where $\s$ denotes the Haar measure on the unit sphere. Assuming that the density $p$ is Lipschitz and positive on 
the compact set $\X$, one can get, for a generic constant $C>0$,
$$\norm{\E[Z_\ell(x)]}\leq C\,\E[\ta(x)^2+\ta(x)\,\1{\ta(x)>\delta(x)}].$$
Therefore, if $\nabla_2\Delta$ is bounded, we gain an order in the expectation of the bias as soon as
$$\int_\X\E[\ta(x)\,\1{\ta(x)>\delta(x)}]\,\mathrm dQ(x)\leq C(k/n)^{2/d}.$$
Thanks to Lemma \ref{censored_tau}, this property holds true as soon as the following condition holds true. 
\begin{enumerate}[label=(A), wide=0.5em, leftmargin=*]
\item\label{cond:regA} $\underset{L>0}\sup\,\{L^{1/d}\int_\X\exp(-L\,\delta(x)^d)\;\mathrm dQ(x)\}<\infty$.
\end{enumerate}
These derivations will be formalized in our proofs, with an upper-bound for $\E[B_{i,n}^2]$, which is necessary 
for bounding the mean squared error of $\hat e_i(h)$.

When the support of $Q$ is included in the interior of $\X$, i.e. $\{x\in\X:\delta(x)\geq\varepsilon\}$ for some 
$\varepsilon>0$, condition \ref{cond:regA} is trivially satisfied. However, such an assumption is quite 
restrictive and cannot be used for studying the standard average treatment effect. See Section~\ref{ate} for 
details. Without such a restriction on the target measure $Q$, condition \ref{cond:regA} is related to
the geometry of the boundary of $\X$. We defer the reader to Section \ref{geometry} for a discussion on this 
condition.

Though our work is mainly devoted to the control of the bias, we will also analyse the variance term $\hat e_i(h)-
\E[\hat e_i(h)\mid X]$ from an original exponential bound for the tail distribution function of the volume
$Q(A_k(z))$, where $A_k(z):=\{x\in\X\,:\,\norm{z-x}\leq\ta(x)\}$ is called the \emph{catchment area} of point $x$,
see \citet{abadie2006large} or \citet{lin2023estimation}. The results will be presented in Subsection
\ref{rate_variance}.

In the sequel, we will refer to the following assumptions. 
\begin{enumerate}[label=(X\arabic*), wide=0.5em, leftmargin=*]
\item\label{cond:reg1} $\X$ is a compact subset of $\R^d$.
\item\label{cond:reg2} There exists a constant $c>0$ such that for all $x\in\X$ and all $0\le r\le\diam(\X)$, 
$\va{B(x,r)\cap\X}\geq c\va{B(x,r)}$, where $\va B$ denotes the Lebesgue measure of a Borel set $B$ in $\R^d$.
\item\label{cond:reg3} The distributions $P$ and $Q$ are absolutely continuous with respect to the Lebesgue 
measure on $\R^d$, with respective densities $p$ and $q$. Furthermore, there exist $\pinf,\,\qsup>0$ such that for
all $x\in\X,\,p(x)\geq\pinf$ and $q(x)\leq\qsup$.
\item\label{cond:reg4} The function $g$ is Lipschitz-continuous on $\X$.
\end{enumerate}

\textbf{Notes}: \begin{enumerate}
\item Assumption \ref{cond:reg2} requires that the intersection of a ball with the probability support $\X$ should 
have a volume comparable to that of the ball when the center is inside the support. Such an assumption has been 
widely used in previous studies. See \citet{Gadat}, \citet{lin2023estimation}, or \citet{portier2023scalable} 
among others. As for Assumption \ref{cond:regA}, such a condition can be seen as a geometric condition for the 
boundary of the probability support $\X$. Sufficient conditions ensuring the validity of Assumption 
\ref{cond:reg2} or Assumption \ref{cond:regA} will be studied in Section \ref{geometry}, as well as some examples 
showing that these two conditions are not equivalent in general. Assumptions \ref{cond:reg1} and \ref{cond:reg3} 
are also classical regularity conditions for studying $k$-NN type estimators. 
\item Assumption \ref{cond:reg4} is a standard regularity condition for some conditional expectation and which 
is often compatible with a bounded probability support $\X$.
\end{enumerate}

\subsection{Rate for the bias}\label{rate_bias}
\bigskip

Simply assuming that the regression function $g$ is Lipschitz-continuous and using the bounds on the moments of 
the $k$-NN radius as expressed in Lemma \ref{moments_tau}, we can obtain the first control of the conditional bias
$B_{2,n}:=\E[\hat e_2(h)\mid X]-e_2(h)$. For $B_{1,n}$, the same control has been already obtained in Proposition
$3$ in \citet{portier2023scalable} under similar assumptions.

\begin{prop}\label{bias}
Under Assumptions \ref{cond:reg1} to \ref{cond:reg4}, for all $\la\ge 1$, we have $$\E[\va{B_{2,n}}^\la]\leq 
C_{\la,d,P,h}\;\left(\frac k{n+1}\right)^{\la/d},$$ where $C_{\la,d,P,h}:=L^\la\,2\,\Gamma(2+\floor{\la/d})\,
(c\pinf\,\vab)^{-\la/d}$ and $L$ is the Lipschitz constant for $g$. Here, $\Gamma$ denotes Euler's gamma function 
and $\floor a$ denotes the integer part of $a\in\R$.
\end{prop}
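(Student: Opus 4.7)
The plan is to rewrite the conditional bias in a form that is amenable to a direct Lipschitz estimate, and then pass to the $\lambda$-th moment via Jensen and the moment bound on the $k$-NN radius provided by Lemma \ref{moments_tau}.

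First I would unfold the definition of $\hat e_2(h)$. Expanding $M_k^*(X_i)$ gives
$$\hat e_2(h)=\frac 1{mk}\sum_{j=1}^m\sum_{i=1}^n\1{\norm{X_i-X_j^*}\leq\ta(X_j^*)}\,h(X_i,Y_i)=\frac 1m\sum_{j=1}^m\hat g_n(X_j^*),$$
with $\hat g_n(x):=k^{-1}\sum_{i=1}^n\1{\norm{X_i-x}\leq\ta(x)}h(X_i,Y_i)$. Conditioning on $X$, using that the $Y_i$'s satisfy $\E[h(X_i,Y_i)\mid X]=g(X_i)$ and that the $X_j^*$'s are i.i.d.\ with density $q$ and independent of $(X,Y)$, Fubini then yields
$$\E[\hat e_2(h)\mid X]=\int_\X\E[\hat g_n(x)\mid X]\,\mathrm dQ(x)=\int_\X\frac 1k\sum_{\ell=1}^k g\left(X_{\ik[\ell]}\right)\,\mathrm dQ(x).$$
Since $e(h)=\int_\X g(x)\,\mathrm dQ(x)$, this gives the telescoped expression
$$B_{2,n}=\int_\X\frac 1k\sum_{\ell=1}^k\left[g\left(X_{\ik[\ell]}\right)-g(x)\right]\,\mathrm dQ(x).$$

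Second, I would apply Assumption \ref{cond:reg4}: for each $\ell\in\ieu[k]$, $\norm{X_{\ik[\ell]}-x}\leq\ta(x)$, so $\sva{g(X_{\ik[\ell]})-g(x)}\leq L\,\ta(x)$. Averaging over $\ell$ and integrating in $x$ yields
$$\va{B_{2,n}}\leq L\int_\X\ta(x)\,\mathrm dQ(x).$$
Since $Q$ is a probability measure and $t\mapsto t^\la$ is convex for $\la\geq 1$, Jensen's inequality delivers
$$\va{B_{2,n}}^\la\leq L^\la\int_\X\ta(x)^\la\,\mathrm dQ(x).$$

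Third, I would take expectations, exchange with the integral by Fubini--Tonelli, and invoke Lemma \ref{moments_tau}, which under Assumptions \ref{cond:reg1}--\ref{cond:reg3} bounds $\E[\ta(x)^\la]$ uniformly in $x\in\X$ by $2\,\Gamma(2+\floor{\la/d})(c\pinf\vab)^{-\la/d}\,(k/(n+1))^{\la/d}$. Plugging this in and using $Q(\X)=1$ produces exactly $C_{\la,d,P,h}\,(k/(n+1))^{\la/d}$.

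The only non-routine step is the initial rewriting of $B_{2,n}$ as an integral of ordered-neighbour differences: once the identity $\E[\hat e_2(h)\mid X]=\int_\X k^{-1}\sum_\ell g(X_{\ik[\ell]})\,\mathrm dQ(x)$ is in hand, everything reduces to Lipschitz control of $g$ and the uniform moment bound on $\ta(x)$. No geometric refinement (no Assumption \ref{cond:regA}) is needed at this first-order level; boundary effects will only enter when one wants to improve the rate from $(k/n)^{1/d}$ to $(k/n)^{2/d}$ later on.
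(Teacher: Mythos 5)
Your proof is correct and follows essentially the same route the paper intends: rewrite $B_{2,n}=\int_\X k^{-1}\sum_{\ell=1}^k\bigl(g(X_{\ik[\ell]})-g(x)\bigr)\,\mathrm dQ(x)$, bound each difference by $L\,\ta(x)$ via the Lipschitz assumption, and conclude with Jensen's inequality and the uniform moment bound of Lemma~\ref{moments_tau}, which reproduces the stated constant exactly. No gaps.
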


We will now prove that under stronger conditions on the function $g$ and the distribution $P$, we can derive 
better rates for the second-order bias $\E[B_{i,n}^2]$ for $i=1,2$. In what follows, for any $x\in\X$, we denote 
by $\nabla^2_2\Delta(x,y)$ the Hessian matrix of $z\mapsto \Delta(x,z)$ at point $y\in\X$. The Hessian matrix of 
$g$ at point $x$ will be simply denoted by $\nabla^2 g(x)$. We will also denote by $\norm\cdot$ the operator norm 
on the space of $d\times d$ matrices with real coefficients, corresponding to the norm $\norm\cdot$ used on $\R^d$.
We will therefore consider the following two assumptions. In what follows, we denote by $U$ an open convex and
bounded set containing $\X$ and by $\mbox{co}(\X)$ the convex hull of $\X$. We recall that $\mbox{co}(\X)$ is 
compact if $\X$ itself is compact.
\begin{enumerate}[label=(X5), wide=0.5em, leftmargin=*]
\item\label{cond:reg5} The density $p$ is Lipschitz-continuous on $\X$.
\end{enumerate}
\begin{enumerate}[label=(X6-\arabic*), wide=0.5em, leftmargin=*]
\item\label{cond:reg61} For any $x\in\X$, the mapping $\Delta(x,\cdot)$ is the restriction to $\X$ of a mapping 
from $U$ to $\R$, still denoted by $\Delta(x,\cdot)$, two-times continuously differentiable on $U$ and such that 
$$\sup_{(x,y)\in\X\times \mbox{co}(\X)}\left\{\norm{\nabla_2\Delta(x,y)}+\norm{\nabla_2^2\Delta(x,y)}\right\}<
\infty.$$
\item\label{cond:reg62} The mapping $g$ is a restriction to $\X$ of a two-times continuously differentiable mapping
from $U$ to $\R$, still denoted by $g$.
\end{enumerate}

Allowing the extension of the mappings $\Delta$ or $g$ to differentiable mappings defined on a convex 
neighbourhood of $\X$ is only a technical assumption which ensures Lipschitz properties for these mappings and
their gradients restricted to the set $\X$. Such properties are not necessarily valid for differentiable mappings 
only defined on the set $\X$. It also avoids the choice of a definition for continuously differentiable functions 
on the boundary. See \citet{boundary} for a discussion.

\begin{thrm}\label{better_bias}
Suppose that Assumptions \ref{cond:reg1} to \ref{cond:reg5} hold true, as well as Assumption \ref{cond:regA} and
Assumption \ref{cond:reg61} if $i=1$ or Assumption to \ref{cond:reg62} if $i=2$. Then there exist constants 
$C_{i,d,P,Q,h}$ such that for $i=1,2$, $$\E[B_{i,n}^2]\leq C_{i,d,P,Q,h}\,\left(\frac k{n+1}\right)^{\min\{4/d\,,
\,3\}}.$$ 
\end{thrm}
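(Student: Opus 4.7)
The plan is to decompose $\E[B_{i,n}^2] = (\E[B_{i,n}])^2 + \Var(B_{i,n})$ and control the two pieces separately. For concreteness I describe the argument for $i=1$; the case $i=2$ proceeds along the same lines, with \ref{cond:reg62} replacing \ref{cond:reg61} (the analogous local term is $\psi(x) = k^{-1}\sum_\ell [g(X_{\ik[\ell]}) - g(x)]$, obtained by Fubini on $E[\hat e_2(h)\mid X]$). Starting from
\[
B_{1,n} = \int_\X \phi(x)\,\mathrm dQ(x), \qquad \phi(x) := \frac{1}{k}\sum_{\ell=1}^k\bigl[\Delta\bigl(x, X_{\ik[\ell]}\bigr)-g(x)\bigr],
\]
I would Taylor-expand $\Delta(x,\cdot)$ to second order around $x$. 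Thanks to \ref{cond:reg61}, the extension of $\Delta(x,\cdot)$ to the convex set $\mathrm{co}(\X)$ is $C^2$ with uniformly bounded first and second derivatives, so
\[
\phi(x) = \nabla_2\Delta(x,x)^\top \bar Z(x) + R(x), \qquad \bar Z(x) := \frac{1}{k}\sum_{\ell=1}^k Z_\ell(x),
\]
with a quadratic remainder satisfying $\va{R(x)} \leq C\,\ta(x)^2$ uniformly.

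For the squared-bias term $(\E[B_{1,n}])^2$, I would formalize the sphere-density argument sketched right before Assumption \ref{cond:regA}: conditionally on $\tal$ and on the event $\{\tal \leq \delta(x)\}$, the normalized direction $Z_\ell(x)/\tal$ has density on $S^{d-1}$ proportional to $\theta \mapsto p(x+\tal\theta)$. Lipschitzness and positivity of $p$ (Assumptions \ref{cond:reg3} and \ref{cond:reg5}), combined with the fact that the Haar measure on $S^{d-1}$ is symmetric under $\theta\mapsto-\theta$, give $\norm{\E[Z_\ell(x)\1{\tal\leq\delta(x)}]} \leq C\,\E[\tal^2]$. On the complementary event I would use the trivial bound $\norm{Z_\ell(x)} \leq \tal$, and then invoke Lemma \ref{censored_tau} under condition \ref{cond:regA} to obtain $\int_\X \E[\ta(x)\1{\ta(x)>\delta(x)}]\,\mathrm dQ(x) \leq C(k/n)^{2/d}$. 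Combined with Lemma \ref{moments_tau} to dispatch the quadratic remainder, this yields $\va{\E[B_{1,n}]} \leq C(k/(n+1))^{2/d}$, hence $(\E[B_{1,n}])^2 \leq C(k/(n+1))^{4/d}$.

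For the variance term, I would write
\[
\Var(B_{1,n}) = \int_\X\!\!\int_\X \mathrm{Cov}(\phi(x),\phi(y))\,\mathrm dQ(x)\,\mathrm dQ(y).
\]
The key observation is that $\phi(x)$ and $\phi(y)$ are determined by disjoint subsamples whenever the balls $B(x,\ta(x))$ and $B(y,\ta(y))$ do not overlap, so conditioning on these radii makes the covariance vanish on the event of non-overlap. The $Q\otimes Q$-mass of pairs at scale $\norm{x-y} \lesssim (k/n)^{1/d}$ is $O(k/n)$ by boundedness of $q$ (\ref{cond:reg3}) and the volume estimate \ref{cond:reg2}. For such nearby pairs, Cauchy--Schwarz together with the pointwise second-moment bound $\E[\phi(x)^2] \leq C(k/(n+1))^{2/d}$ coming from Lemma \ref{moments_tau} gives $\va{\mathrm{Cov}(\phi(x),\phi(y))} \leq C(k/(n+1))^{2/d}$, leading to $\Var(B_{1,n}) \leq C(k/(n+1))^{2/d+1}$.

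Combining, and noting that for every positive integer $d$ one has $\min(4/d,\,2/d+1) = \min(4/d,\,3)$ (equal to $3$ at $d=1$ and to $4/d$ for $d\geq 2$), I obtain the claimed bound. The most delicate step will be the variance analysis: the rigorous version of the statement that $\phi(x)$ and $\phi(y)$ decouple for far-apart $x,y$ requires careful conditioning on the data-dependent radii $\ta(x),\ta(y)$, and controlling the contribution of pairs where $x$ or $y$ lies close to $\partial\X$ brings back Assumption \ref{cond:regA}, which is precisely what forces the exponent $3$ to appear in the minimum when $d=1$.
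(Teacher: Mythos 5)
Your squared-mean piece is sound and mirrors the paper's own computation: Taylor expansion in the second argument, the symmetry of the sphere together with the Lipschitz continuity and positivity of $p$ to gain one order on $\E[Z_\ell(x)\1{\tal\leq\delta(x)}]$, and Lemma~\ref{censored_tau} under Assumption~\ref{cond:regA} for the boundary event. The gap is in your variance step. The claim that ``conditioning on these radii makes the covariance vanish on the event of non-overlap'' is not correct as stated: a covariance cannot be localised to a random event, and even after restricting $\E[\phi(x)\phi(y)]$ to $D:=\{\tal+\hat\tau_{\ell'}(y)<\norm{y-x}\}$, what one actually obtains (Corollary~\ref{ind_cond}) is only that the \emph{positions} of the two neighbours are conditionally independent \emph{given the radii}; the radii $\tal$ and $\hat\tau_{\ell'}(y)$ are themselves dependent, and the restriction to $D$ couples them further, so $\E[\phi(x)\phi(y)\1{D}]$ does not factor into $\E[\phi(x)]\E[\phi(y)]$ and $\operatorname{Cov}(\phi(x),\phi(y))$ has no reason to be negligible for far-apart $x,y$. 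The paper never needs such a two-sided decoupling: it bounds $\E[B_{i,n}^2]$ directly as a double integral, dominates the conditional expectation $\va{I(a,R_a)}$ by a nondecreasing envelope $T(a,R_a)=CR_a^2+\norm{\nabla g}_\infty R_a\,\1{R_a>\delta(a)}$, and then invokes a dedicated negative-association result for NN radii (Lemma~\ref{negative_correlation}, Corollary~\ref{negative_correlation_NN}) to get $\E[T(x,\tal)T(y,\hat\tau_{\ell'}(y))\1{D}]\leq\E[T(x,\tal)]\,\E[T(y,\hat\tau_{\ell'}(y))]$, each factor being $O((k/(n+1))^{2/d})$ after Lemmas~\ref{moments_tau} and \ref{censored_tau} and Assumption~\ref{cond:regA}. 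This monotone-envelope-plus-negative-correlation step is exactly what is missing from your sketch and cannot be bypassed; once you add it you are no longer bounding a variance at all, but bounding $\E[\phi(x)\phi(y)\1{D}]$ by a product of one-point quantities, i.e.\ reproducing the paper's route.

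A more minor point: your ``near pair'' cutoff at $\norm{x-y}\lesssim(k/n)^{1/d}$ does not exhaust the overlap event, since the radii have exponential rather than compactly supported tails. The paper keeps the indicator $\1{\tal+\hat\tau_{\ell'}(y)\geq\norm{y-x}}$, uses $ab\,\1{a+b\geq\delta}\leq a^2\1{a\geq\delta/2}+b^2\1{b\geq\delta/2}$ and Lemma~\ref{censored_tau} to produce a factor $\exp(-L(n/k)\norm{y-x}^d)$, and integrates in $y-x$ using the boundedness of $q$, which yields the $(k/(n+1))^{1+2/d}$ contribution. Your exponent bookkeeping $\min\{4/d,\,1+2/d\}=\min\{4/d,\,3\}$ is correct, so the proof closes once the decoupling claim is replaced by the conditional-independence-given-radii argument combined with the negative-correlation inequality.
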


\paragraph{Note} When $d\geq 2$, one can note that the convergence rate for the bias is $(k/n)^{2/d}$ which is the
square of the rate obtained from Proposition \ref{bias}, which only assumes a Lipschitz regularity for the 
conditional expectation. When $d=1$, the upper-bound for the bias is $(k/n)^{3/2}$ whereas a Lipschitz condition 
only guarantees the rate $k/n$. In comparison, the standard deviation of our estimators is $1/\sqrt n+1/\sqrt m$, 
as discussed in the next paragraph. When  $k=O(1)$, we then conclude that the second-order regularity of the 
conditional expectation helps to improve the convergence rate for the RMSE only for $d\geq 2$, the bias being 
already negligible with respect to the standard deviation in the Lipschitz case when $d=1$.

We next give a second-order bias expansion which is valid with a bit more regularity on the mappings $g,\,\Delta$,
and $p$, and under a more restrictive geometric condition, the inclusion of the support of $Q$ in the interior of
$\X$. Though quite restrictive, this condition is very common in the literature and useful to avoid boundary bias
problems. Moreover, it automatically entails the validity of Assumption \ref{cond:regA}. In the context of ATE, a
similar bias expansion is given in \citet[Theorem~2]{abadie2006large}. However, our expansion is more general here 
since we also consider the expectation of the square of $B_{i,n}$, a quantity which is important since it appears 
in the MSE, and $k$ is not necessarily bounded.

\begin{thrm}\label{bias_expansion}
Suppose that Assumptions \ref{cond:reg1} to \ref{cond:reg5} and Assumption \ref{cond:reg61} if $i=1$ or Assumption
\ref{cond:reg62} if $i=2$ hold true. Suppose furthermore that there exists $\beta\in(0,1)$ such that $\nabla^2g$ 
is $\beta$-Hölder continuous on $\mbox{co}(\X)$ if $i=2$ or $$\sup_{\substack{(x,y,z)\in\X\times\mbox{co}(\X)^2 \\ 
y\neq z}}\frac{\norm{\nabla_2^2 \Delta(x,y)-\nabla_2^2\Delta(x,z)}}{\norm{y-z}^\beta}<\infty,$$
if $i=1$. Finally, assume that $k^{d+1}=o(n)$, that the source density $p$ is a restriction to $\X$ of an element
$\overline p\in C^1(U)$ with $\nabla\overline p$ is $\eta$-Hölder continuous on $co(\X)$ for some $\eta\in(0,1)$, 
and that the support of $Q$ lies in the interior of $\X$. 
Then, for $i=1,2$, we have the following expansion for the first order bias
$$\E[B_{i,n}]=C_i(k,P,Q,h)\,n^{-2/d}+o((k/n)^{2/d}),$$ where
$$C_i(k,P,Q,h):=\frac 1k\sum_{\ell=1}^k\frac{\Gamma(\ell+2/d)}{\Gamma(\ell)}\int_\X(p(x)\,\vab)^{-2/d}\Psi_i(x)\,
\mathrm dQ(x)\text{, and}$$ $$\Psi_1(x):=\frac 1{\s(S^{d-1})}\int_{S^{d-1}}\theta^\top\left(\nabla\Delta(x,x)^\top
\frac{\nabla p(x)}{p(x)}+\frac{\nabla_2^2\Delta(x,x)}2\right)\theta\,\mathrm d\s(\theta),$$ 
$$\Psi_2(x):=\frac 1{\s(S^{d-1})}\int_{S^{d-1}}\theta^\top\left(\nabla g(x)^\top\frac{\nabla p(x)}{p(x)}+
\frac{\nabla^2g(x)}2\right)\theta\,\mathrm d\s(\theta).$$ 
\bigskip

Similarly, when the dimension of the covariates $d$ is greater than $2$, for $i=1,2$, we have the
following expansion for the second order bias $$\E[B_{i,n}^2]=C_i(k,P,Q,h)^2\,n^{-4/d}+o((k/n)^{4/d}).$$
\end{thrm}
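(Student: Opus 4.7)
The plan is to reduce both expansions to an asymptotic evaluation of a single pointwise quantity, then integrate against $Q$. I focus on $i=2$; the case $i=1$ is analogous after replacing $g(X_{\hat i_\ell(x)})-g(x)$ by $\Delta(x,X_{\hat i_\ell(x)})-\Delta(x,x)$ and using the assumed Hölder bound on $\nabla_2^2\Delta$. Conditioning on the sigma-algebra $\mathcal F_{m,n}$ and using that $X_j^*\sim Q$ independently of the labelled sample gives the identity
\[
\E[B_{2,n}] \;=\; \int_\X \frac{1}{k}\sum_{\ell=1}^k \E\!\bigl[g(X_{\hat i_\ell(x)})-g(x)\bigr]\,\mathrm dQ(x),
\]
so it suffices to produce a uniform-in-$x$ asymptotic for the summand on $\mathrm{supp}(Q)$. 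Because $Q$ is supported in the interior of $\X$, there exists $\varepsilon>0$ with $\delta(x)\geq\varepsilon$ on $\mathrm{supp}(Q)$, and restricting to the complement of $\{\ta(x)\leq\varepsilon\}$ costs only exponentially small terms in $n$ (by the binomial tail estimate underlying Lemma \ref{censored_tau}), which under $k^{d+1}=o(n)$ are $o((k/n)^{2/d})$.

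On the event $\{\tal\leq\delta(x)\}$ the ball $B(x,\tal)$ sits inside $\X$ and, conditionally on $\tal$, the direction $\Theta_\ell:=(X_{\hat i_\ell(x)}-x)/\tal$ carries the density $\theta\mapsto p(x+\tal\theta)/\!\int_{S^{d-1}}p(x+\tal\phi)\,\mathrm d\s(\phi)$ on $S^{d-1}$. I would plug in the second-order Taylor expansion of $g$ with $\beta$-Hölder remainder and the first-order expansion of $p$ with $\eta$-Hölder remainder, expand the normalizing denominator as $(p(x)\s(S^{d-1}))^{-1}(1+O(\tal))$, then use the symmetry identities $\int_{S^{d-1}}\theta\,\mathrm d\s=0$ and $\int_{S^{d-1}}\theta\theta^\top\,\mathrm d\s=\s(S^{d-1})/d\,I_d$. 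The linear-in-$\theta$ term cancels; the cross term $\tal\nabla g^\top\theta\cdot\tal\nabla p^\top\theta/p$ and the quadratic Taylor term combine to give $\tal^2\Psi_2(x)$ after re-expressing the normalized trace as a spherical average, with remainder $O(\tal^{2+\min(\beta,\eta)})$ uniformly in $x$.

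The radial expectation is then handled through Lemma \ref{moments_tau} and the standard fact that $n p(x)|V^d|\tal^d$ is asymptotically Gamma$(\ell,1)$, which supplies $\E[\tal^2]=\Gamma(\ell+2/d)/\Gamma(\ell)\cdot(np(x)|V^d|)^{-2/d}+o(n^{-2/d})$ uniformly on $\mathrm{supp}(Q)$ and the complementary estimate $\E[\tal^{2+\min(\beta,\eta)}]=o(n^{-2/d})$ that absorbs the Hölder remainders. Integrating against $Q$ and averaging in $\ell$ yields the claimed expansion $\E[B_{2,n}]=C_2(k,P,Q,h)n^{-2/d}+o((k/n)^{2/d})$. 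For the squared-bias statement I would write $\E[B_{i,n}^2]=(\E[B_{i,n}])^2+\Var(B_{i,n})$, so that the first-order expansion just proven gives the leading $C_i^2 n^{-4/d}$ automatically, and the task reduces to proving $\Var(B_{i,n})=o((k/n)^{4/d})$. I would condition on the labelled-covariate sigma-algebra: the conditional variance is $O(1/m)$ and absorbed by the implicit assumption $m\to\infty$ sufficiently fast, while the between-sample variance $\Var(\int G(x)\,\mathrm dQ(x))$ is attacked by a Hoeffding-style decomposition based on catchment areas (cf. Subsection \ref{rate_variance}), giving a bound of order $k/n$ which is $o((k/n)^{4/d})$ precisely when $d>2$, matching the stated restriction.

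The main obstacle will be this last variance estimate: tracking the long-range correlations induced by shared nearest neighbours and showing that each labelled point appears in $A_k(X_j^*)$ for only a controlled number of queries with high probability, so that an effective independence yields a rate of order $k/n$. The rest is essentially careful bookkeeping on Taylor remainders with the Hölder exponents $\beta,\eta$, done uniformly on $\mathrm{supp}(Q)$ where all functions and their derivatives are bounded by \ref{cond:reg61}--\ref{cond:reg62} and the $C^1$-Hölder hypothesis on $p$; there the compactness of $\mathrm{supp}(Q)$ and continuity of $\Psi_i$ and $p$ make the Fubini and dominated convergence arguments routine.
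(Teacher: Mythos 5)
Your first-moment argument is essentially the paper's: condition on the radius, use the spherical direction density $\theta\mapsto p(x+\hat\tau_\ell(x)\theta)$ (valid because $\mathrm{supp}(Q)$ sits in the interior of $\X$), Taylor-expand $g$ and $p$, kill the odd term by symmetry, and finish with radial moments. The one place you are substantially lighter than the paper is the step you call a ``standard fact'', namely $\E[\hat\tau_\ell(x)^2]=\frac{\Gamma(\ell+2/d)}{\Gamma(\ell)}(np(x)\vab)^{-2/d}+o((k/n)^{2/d})$ \emph{uniformly} in $\ell\le k$ and in $x$ on the support of $Q$: this is where most of the paper's work goes (replacing $F_x(r)^{\ell-1}$ by $(p(x)\vab r^d)^{\ell-1}$ and $(1-F_x(r))^{n-\ell}$ by $\exp(-np(x)\vab r^d)$ on $[0,\varepsilon/k]$, Lemmas~\ref{lem:negligible_integral_simple_simplified}--\ref{lem:CanChangeExponentOfOneMinusFExtendedCorrected}), and it is precisely where $k^{d+1}=o(n)$ is used, not merely in the exponential tail cut-off.

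The genuine gap is the second-moment part. Writing $\E[B_{i,n}^2]=(\E[B_{i,n}])^2+\Var(B_{i,n})$ is fine, but your treatment of $\Var(B_{i,n})$ fails on two counts. First, $B_{i,n}=\E[\hat e_i(h)\mid X]-e(h)$ integrates against the true measure $Q$, so it does not depend on the target sample at all: there is no $O(1/m)$ term, and the theorem assumes nothing about $m$. Second, and fatally, even if you could prove $\Var(B_{i,n})=O(k/n)$, that would not suffice: $k/n=o((k/n)^{4/d})$ if and only if $d>4$, so your route misses $d=3,4$, which the statement covers; your claim that $k/n$ is negligible ``precisely when $d>2$'' is an arithmetic slip. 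The true order of the fluctuation is $(k/n)^{1+2/d}$ (pairs $(x,y)$ at distance $\lesssim(k/n)^{1/d}$ carry a correlation of order $(k/n)^{2/d}$ over a $Q^{\otimes 2}$-mass of order $k/n$), and $1+2/d>4/d$ exactly when $d>2$ --- that is where the dimension restriction comes from. Getting this requires handling the dependence between $\hat\tau_\ell(x)$ and $\hat\tau_{\ell'}(y)$: the paper splits on whether the two NN balls intersect, bounds the intersecting part by $(k/n)^{1+2/d}$ as in the proof of Theorem~\ref{better_bias}, and on the non-intersecting part uses the exact joint law of the two radii (Lemma~\ref{pair_ind_distribution}, Corollary~\ref{ind_cond}) together with the exponential approximations to show that $\E[(g(X_{\hat i_\ell(x)})-g(x))(g(X_{\hat i_{\ell'}(y)})-g(y))\1{\hat\tau_\ell(x)+\hat\tau_{\ell'}(y)<\norm{y-x}}]$ factorizes into the product of the marginal biases up to $o((k/n)^{4/d})$. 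Your proposed catchment-area/Hoeffding argument is the tool for the noise term $E_n$ (conditionally independent summands given $X$); it does not address this radius-dependence, which is the actual content of the second half of the proof, and which you yourself flag as the unsolved ``main obstacle''.
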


\bigskip

\paragraph{Note.} Under the assumptions of Theorem~\ref{bias_expansion}, the support of $Q$, which 
is defined as the smallest closed subset of $\R^d$ with measure $1$, is a compact subset of $\X$ and its distance 
to the boundary of $\X$ is then positive. When $\X=[0,1]$, the support of $Q$ should be included in 
$[\epsilon,1-\epsilon]$ for some $\epsilon\in (0,1/2)$, which excludes many natural probability distributions such 
as all beta distributions and then a simple uniform distribution. \\
If we relax the condition of inclusion of the supports and we replace it with the more general geometric condition
\ref{cond:regA}, as discussed in subsection \ref{assumptions}, then we no longer have access to this bias 
expansion but we still have an upper bound on the second-order bias.

\paragraph{Note.} As mentioned in the proof of Lemma $3$ in \citet{portier2023scalable}, one can find $L_1,L_2>0$ 
not depending on $\ell\geq 1$ such that $L_1\ell^{2/d}\leq\Gamma\left(\ell+2/d\right)/\Gamma\left(\ell\right)\leq 
L_2\ell^{2/d}$. We then observe that the leading term in the bias expansion is really of order $k^{2/d}/n^{2/d}$.

\subsection{Rate for the variance}\label{rate_variance}

Numerous studies have examined the variance term $V_{m,n}:=\hat e_i(h)-\E[\hat e(h)\mid X]$ and have proved that 
it converges at a parametric rate $\E[V_{m,n}^2]\leq C(m^{-1}+n^{-1})$. Interestingly, we can obtain this result 
as a consequence of a concentration inequality on the volume of catchment areas. Recall that the catchment
area of a point $z\in\X$ is defined as the set $A_k(z):=\{x\in\X\,:\,\norm{z-x}\leq\ta(x)\}$. 
\begin{thrm}\label{catchment}
Suppose that Assumptions \ref{cond:reg1} to \ref{cond:reg3} hold true, then for all $t>0$, we have the inequality
$$\Pb(nQ(A_k(X_1))\geq kt)\leq 3e^{1/4}\exp\left(-\frac{c\pinf}{12\qsup}\,t\right).$$
\end{thrm}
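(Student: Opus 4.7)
The plan is to decompose the catchment area $A_k(X_1)$ via a cone covering of $\R^d$ around $X_1$, reduce each cone piece to a Binomial order-statistic tail, and combine using a Chernoff bound together with a union bound.

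First I would cover $\R^d\setminus\{X_1\}$ by a finite family of open cones $C_1,\ldots,C_{K_d}$ of common apex $X_1$ and angular half-aperture at most $\pi/6$; these satisfy the classical cone comparison: for any $y,z\in C_j$ with $\|y-X_1\|\leq\|z-X_1\|$, one has $\|y-z\|<\|z-X_1\|$. For each cone I introduce $R_j$, the distance from $X_1$ to its $k$-th nearest point in $\{X_2,\ldots,X_n\}\cap C_j$ (with $R_j=\diam(\X)$ as convention if the cone contains fewer than $k$ sample points). The geometric key step: if $x\in A_k(X_1)\cap C_j$, then $X_1$ is among the $k$ nearest neighbours of $x$, so at most $k-1$ sample points lie strictly closer to $x$ than $X_1$; by the cone property each $X_i\in C_j$ with $\|X_i-X_1\|<\|x-X_1\|$ is one of them, so at most $k-1$ such $X_i$ can exist, forcing $\|x-X_1\|\leq R_j$ and hence
\[
A_k(X_1)\cap C_j\ \subseteq\ B(X_1,R_j)\cap C_j.
\]

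Next, the density bounds from Assumption \ref{cond:reg3} together with the volume regularity from Assumption \ref{cond:reg2} give
\[
Q(A_k(X_1))\ \leq\ \sum_{j=1}^{K_d}Q\bigl(B(X_1,R_j)\cap C_j\bigr)\ \leq\ \frac{\qsup}{c\,\pinf}\sum_{j=1}^{K_d}P\bigl(B(X_1,R_j)\bigr),
\]
via $Q(B\cap C_j)\leq\qsup\va{B\cap C_j}\leq(\qsup/c)\va{B\cap\X}\leq(\qsup/(c\,\pinf))P(B)$. By construction $R_j$ is the $k$-th order statistic of the cone-restricted sample distances from $X_1$, so a standard Binomial representation yields, conditionally on $X_1$,
\[
\Pb\bigl(nP(B(X_1,R_j))\geq s\,\big|\,X_1\bigr)\ \leq\ \Pb\bigl(\operatorname{Binom}(n-1,s/n)\leq k-1\bigr),
\]
to which the Chernoff bound for the Binomial lower tail applies and gives exponential decay in $s$, uniformly in $k$.

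The remaining step is to combine the $K_d$ cone contributions by a union bound and to optimise the Chernoff parameter. The main obstacle is precisely this quantitative bookkeeping: delivering the absolute prefactor $3e^{1/4}$ and a rate of the stated form $c\pinf/(12\qsup)$ requires a careful joint optimisation of the Chernoff exponent against the cone-cardinality $K_d$, with the dimensional dependence absorbed into the regularity constant $c$ of Assumption \ref{cond:reg2}. The geometric, density-comparison, and order-statistic ingredients are otherwise routine once the cone decomposition is in place.
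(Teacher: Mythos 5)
There is a genuine gap at the binomial reduction step. Conditionally on $X_1$, the event $\{R_j>r\}$ says that at most $k-1$ of the points $X_2,\dots,X_n$ fall in $C_j\cap B(X_1,r)$, so it is governed by a $\operatorname{Binom}\mathopen{}\left(n-1,\,P(C_j\cap B(X_1,r))\right)$ variable, not by $\operatorname{Binom}(n-1,P(B(X_1,r)))$; since $P(C_j\cap B(X_1,r))\le P(B(X_1,r))$, the comparison you invoke goes the wrong way (the cone-restricted binomial has the \emph{larger} lower-tail probability). Under \ref{cond:reg1}--\ref{cond:reg3} there is no lower bound on the cone-restricted mass: Assumption \ref{cond:reg2} controls $\va{B(x,r)\cap\X}$, not $\va{B(x,r)\cap C_j\cap\X}$, and for $X_1$ near the boundary (or for a support with several pieces, e.g.\ an L-shaped or annular domain satisfying \ref{cond:reg2}) a cone can exit $\X$ at the apex and re-enter only far away, so that $R_j$ is of macroscopic size --- even $\diam(\X)$ under your convention --- with probability bounded away from zero. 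Then $nP(B(X_1,R_j))$ is of order $n$, and no union bound over the $K_d$ cones can produce a bound $\exp(-\text{const}\cdot t)$ uniform in $n$ and $k$. Your geometric inclusion $A_k(X_1)\cap C_j\subseteq B(X_1,R_j)\cap C_j$ and the comparison $Q(B(X_1,R_j)\cap C_j)\le\frac{\qsup}{c\pinf}P(B(X_1,R_j))$ are correct; what fails is the tail control of $P(B(X_1,R_j))$. Repairing it would require a cone-wise analogue of \ref{cond:reg2}, which is a strictly stronger hypothesis (it already fails at boundary points of a convex support) and is not available here.

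For comparison, the paper's proof avoids any directional decomposition. It sets $Z:=\frac nkQ(A_k(X_1))$, notes $\E[Z]=1$, and truncates away the near region by $R:=\frac nk\int_\X\1{\norm{X_1-x}\le\ta(x)}\1{\norm{X_1-x}>a}\,\mathrm dQ(x)$ with $a^d:=\frac{2kt}{3n\qsup\vab}$, so that $Z-R\le\frac{2t}3$ pointwise. The elementary fact $\E[(Z-\E[Z])\1{Z\ge t}]\ge 0$ then yields $\Pb(Z\ge t)\le 3\,\E[R]/\E[Z]$, and $\E[R]$ is bounded using the independence of $\ta(x)$ and the rank event $\{\norm{X_1-x}\le\ta(x)\}$ (Lemma \ref{order_statistics}) together with the NN-radius tail bound of Lemma \ref{tail_tau}; the constants $3e^{1/4}$ and $\frac{c\pinf}{12\qsup}$ come directly from this factorisation, not from optimising a Chernoff exponent against a cone covering.
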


With the control on the bias and the above concentration inequality, we can now conclude on the mean squared error
of our estimator, adding the following assumption. 
\begin{enumerate}[label=(X7), wide=0.5em, leftmargin=*]
\item\label{cond:reg7} The random variable $\Var(h(X,Y)\mid X)$ is bounded above by some constant $\bar\s^2$.
\end{enumerate}

\begin{coro}\label{overall_speed}
Suppose that Assumptions \ref{cond:reg1} to \ref{cond:reg5} hold true, as well as Assumption \ref{cond:reg61} if 
$i=1$ or Assumption to \ref{cond:reg62} if $i=2$, Assumptions \ref{cond:reg7} and Assumption \ref{cond:regA}. Then
for all $m,n\in\N^*$ and all $1\le k\le n$, we have $$\E[(\hat e_i(h)-e(h))^2]\leq C\left(\left(\frac k{n+1}
\right)^{\min\{4/d\,,\,3\}}+\frac 1m+\frac 1n\right),$$ where $C>0$ is a constant depending on the dimension $d$, 
the function $h$ and the distributions $P$ and $Q$.
\end{coro}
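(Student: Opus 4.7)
The plan is a classical bias-variance decomposition conditional on the source covariates $X=(X_1,\ldots,X_n)$. Writing $\hat e_i(h)-e(h)=V_{m,n}+B_{i,n}$ with $V_{m,n}:=\hat e_i(h)-\E[\hat e_i(h)\mid X]$, the cross term vanishes by the tower property (since $B_{i,n}$ is $X$-measurable and $\E[V_{m,n}\mid X]=0$), giving
$$\E[(\hat e_i(h)-e(h))^2]=\E[V_{m,n}^2]+\E[B_{i,n}^2].$$
Theorem~\ref{better_bias} directly furnishes $\E[B_{i,n}^2]\le C(k/(n+1))^{\min\{4/d,3\}}$ under the listed assumptions, so all that remains is to prove $\E[V_{m,n}^2]\le C(1/m+1/n)$.

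To control the variance I would condition further on the target sample $X^*$, writing $V_{m,n}=V^{(1)}+V^{(2)}$ with $V^{(1)}:=\hat e_i(h)-\E[\hat e_i(h)\mid X,X^*]$ (randomness from the $Y_i$ and, for $i=1$, from the $\hat Y_j^*$) and $V^{(2)}:=\E[\hat e_i(h)\mid X,X^*]-\E[\hat e_i(h)\mid X]$ (randomness from the i.i.d.\ target sample given $X$). These are orthogonal in $L^2$, so $\E[V_{m,n}^2]=\E[(V^{(1)})^2]+\E[(V^{(2)})^2]$. The term $\E[(V^{(2)})^2]\le C/m$ follows immediately: given $X$, $\E[\hat e_i(h)\mid X,X^*]$ is the empirical mean over $m$ i.i.d.\ $X_j^*\sim Q$ of a function bounded by $\|g\|_\infty<\infty$ (finite by Lipschitz continuity on a compact set, Assumptions~\ref{cond:reg1} and~\ref{cond:reg4}).

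For $\E[(V^{(1)})^2]$ I illustrate with $i=2$: given $(X,X^*)$ one has $V^{(1)}=(mk)^{-1}\sum_{i=1}^nM_k^*(X_i)(h(X_i,Y_i)-g(X_i))$, whose summands are conditionally independent, centered, and of variance at most $\bar\sigma^2$ by Assumption~\ref{cond:reg7}. Hence $\E[(V^{(1)})^2\mid X,X^*]\le\bar\sigma^2(mk)^{-2}\sum_i M_k^*(X_i)^2$. Conditional on $X$, $M_k^*(X_i)\sim\mathrm{Binomial}(m,Q(A_k(X_i)))$, so $\E[M_k^*(X_i)^2\mid X]\le m Q(A_k(X_i))+m^2 Q(A_k(X_i))^2$. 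Combining the identity $\sum_i\E[Q(A_k(X_i))]=k$ (from $\sum_i M_k^*(X_i)=mk$ and exchangeability) with the second-moment bound $\E[Q(A_k(X_1))^2]\le C(k/n)^2$ — which is read off from the sub-exponential tail of Theorem~\ref{catchment} by $\E[(nQ(A_k(X_1))/k)^2]=2\int_0^\infty tP(nQ(A_k(X_1))/k>t)\,dt<\infty$ — yields $\E[\sum_i M_k^*(X_i)^2]\le mk+Cm^2k^2/n$ and therefore $\E[(V^{(1)})^2]\le C\bar\sigma^2(1/(mk)+1/n)\le C\bar\sigma^2(1/m+1/n)$. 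Estimator~$1$ is handled by the same template with $\hat Y_j^*$ taking the role of $Y_i$.

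The main obstacle is precisely the upgrade from the trivial first-moment identity to the $(k/n)^2$ second-moment bound for $Q(A_k(X_1))$: using only $Q(A_k(X_1))\le 1$ one would obtain $\E[Q(A_k(X_1))^2]\le k/n$, contributing a parasitic $1/k$ term to the variance that would defeat the parametric rate whenever $k$ remains bounded. This is exactly what the exponential tail inequality of Theorem~\ref{catchment} is designed to eliminate, so the entire variance analysis hinges on invoking that concentration bound at the right place.
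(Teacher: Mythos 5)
For the case $i=2$, your argument is correct and is in substance the paper's own proof: the bias is handled by Theorem~\ref{better_bias}, and the parametric rate for the variance rests on the catchment-area concentration of Theorem~\ref{catchment} together with Assumption~\ref{cond:reg7} and the boundedness of $g$ on the compact $\X$. The only real difference is the order of conditioning: the paper splits the variance as $F_{m,n}+E_n$ by conditioning on $(X,Y)$, replacing $M_k^*(X_i)$ exactly by its conditional mean $mQ(A_k(X_i))$, whereas you condition on $(X,X^*)$, keep $M_k^*(X_i)$, and use its conditional Binomial$(m,Q(A_k(X_i)))$ law; both routes hinge on the same second-moment bound $\E[Q(A_k(X_1))^2]=O((k/n)^2)$ extracted from the exponential tail of Theorem~\ref{catchment}, and your bookkeeping ($\sum_i M_k^*(X_i)=mk$ a.s., hence $\E[Q(A_k(X_1))]=k/n$ by exchangeability) is sound. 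So for $i=2$ this is the paper's approach up to a transposed decomposition.

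The gap is your final sentence about $i=1$. ``The same template with $\hat Y_j^*$ in the role of $Y_i$'' does not go through verbatim: conditionally on $(X,X^*)$, the summands $h(X_j^*,\hat Y_j^*)$, $1\le j\le m$, are \emph{not} independent, because distinct target points can draw the same source responses $Y_i$, and the labels $\hat Y_j^*$ are only conditionally independent given the whole of $\mathcal F_{m,n}$, i.e.\ given $Y$ as well. Repairing this requires a further layer of conditioning (split the $Y$-noise from the resampling noise given $(X,X^*,Y)$), and the $Y$-noise term then involves the conditional variance of $h(x^*,Y_1)$ given $X_1=u$ with $x^*\neq u$, an ``off-diagonal'' quantity that Assumption~\ref{cond:reg7} (which only bounds $\Var(h(X,Y)\mid X)$) does not literally control. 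The paper sidesteps all of this by quoting Theorem~1 of \citet{portier2023scalable} for $\E[V_{1,n}^2]=O(m^{-1}+n^{-1})$ and only reproving the variance for $i=2$. In short: your proof is complete and correct for $i=2$, but the $i=1$ variance is asserted rather than proved, and the assertion as stated (conditional independence across $j$ given $(X,X^*)$) is false.
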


\paragraph{Note} As a consequence of Corollary \ref{overall_speed}, if we choose $k=1$, our estimator converges at
a parametric rate when the dimension of the covariates $d$ does not exceed $4$. In contrast, as shown by 
Proposition \ref{bias}, a Lipschitz regularity for the conditional expectation only guarantees a parametric rate 
of convergence when $d$ does not exceed $2$. Such an improvement, assuming second order regularity for the 
conditional expectation 
as well as the additional geometric condition \ref{cond:regA} on the support $\X$ of the covariates, is one of the 
main result of the paper.

\section{Application to average treatment effects}\label{ate}

In the setting of average treatment effect, as described in \citet{abadie2006large}, $N$ units $i=1,\dots,N$, each
described by a vector of covariates $X_i$, are given either an active treatment $W_i=1$ or a control treatment
$W_i=0$. For each unit, out of the two potential outcomes $Y_i(0)$ and $Y_i(1)$, only one is observed, namely 
$Y_i=Y_i(W_i)$. The quantities of interest are the global average treatment effect $\mu:=\E[Y(1)-Y(0)]$ and the
average treatment effect on the treated $\mu^t:=\E[Y(1)-Y(0)\mid W=1]$. We will assume that the random vectors 
$\left(W_i,X_i,Y_i(0),Y_i(1)\right)$ are i.i.d.\@ for $1\leq i\leq N$ and are some replications of a generic random 
vector denoted $\left(W,X,Y(0),Y(1)\right)$. While it is straightforward to estimate $\E\left[Y(w)\mid W=w\right]$ 
by a simple empirical average for $w=0,1$, the estimation of the two expectations $\E\left[Y(1-w)\mid W=w\right]$ 
for $w=0,1$ is not obvious since the outcome $Y(1-w)$ is never observed in the group assigned with $W = w$.
Related to our setup, the source or target probability measures $P$ or $Q$ equal to either the probability 
distribution $X\vert W=0$ or $X\vert W=1$, depending on the expectation $\E\left[Y(1)\mid W=0\right]$ or 
$\E\left[Y(0)\mid W=1\right]$ we have to estimate. Then, as in this paper, \citet{abadie2006large} replaced the 
missing outputs by an average on the observed outputs of the $k$ units having the nearest covariates. For $x\in
\X$, $w\in\{0,1\}$ and $\ell\in\ieu[k]$, set $\hat\tau_{\ell,w}(x)$ and $\hat i_{\ell,w}(x)$ respectively the 
distance between $x$ and its $\ell$th-nearest neighbour among the $X_i,\,i=1,\dots,N$ such that $W_i=w$, and the 
corresponding index. Setting 
$$M_k^*(X_i)=\sum_{j=1}^N\1{W_j=1-W_i,\norm{X_i-X_j}\leq\hat\tau_{k,W_i}(X_j)},$$ it led them to the two 
estimators $$\hat\mu:=\frac 1N\sum_{i=1}^N(2W_i-1)\left(1+\frac{M_k^*(X_i)}k\right)Y_i\mbox{ and }\hat\mu^t:=
\frac 1{N_1}\sum_{i=1}^N\left(W_i-(1-W_i)\frac{M_k^*(X_i)}k\right)Y_i$$ for the average treatment effect (ATE) and 
the average treatment effect on the treated (ATT), respectively, where $N_1$ is the number of units that were 
given the active treatment. In what follows, we only focus on the ATE, but similar results are valid for the ATT.  
One can decompose the difference $\hat\mu-\mu$ as $\hat\mu-\mu=E+B+V$, where \begin{align*}
E & :=\frac 1N\sum_{i=1}^N(2W_i-1)\left(1+\frac{M_k^*(X_i)}k\right)(Y_i-g_{W_i}(X_i)), \\ B & :=\frac 1N
\sum_{i=1}^N(2W_i-1)\left(g_{1-W_i}(X_i)-\frac 1k\sum_{\ell=1}^kg_{1-W_i}\left(X_{\hat i_{\ell,1-W_i}(X_i)}\right)
\right),\text{ and} \\ V & :=\frac 1N\sum_{i=1}^N(g_1(X_i)-g_0(X_i))-\mu,
\end{align*}
where $g_w(x):=\E[Y(w)\mid X=x]$ and the nearest neighbours are always taken from the other subpopulation. They 
showed that both the variance term $V$ and the residual term $E$ are of order $N^{-1/2}$ while the conditional 
bias is of order at least $N^{-1/d}$---they ask $k$ to be constant---and similarly for the average treatment 
effect on the treated. Furthermore, under extra regularity assumptions and the additional condition that
\begin{itemize}
\item the support $\X_1$ of $X$ given $W=1$ is a compact subset of the interior of the support $\X_0$ of $X$ given
$W=0$,
\end{itemize}
they proved in their Theorem $2$ that the unconditional bias on the treated $\E[B^t]$ was of order $N^{-2/d}$.
However, because the above condition cannot be asked to be symmetric in terms of treatment $w$, in other words we 
cannot simultaneously have $\X_1\subset\X_0^\circ$ and $\X_0\subset\X_1^\circ$, they could not extend this result 
to the bias $B$ of the global average treatment effect.

With our results, we can prove that under the next less restrictive geometric condition \ref{cond:T8} below,  the conditional bias term $B$ is of order
$(k/N)^{2/d}$, and $(k/N)^{3/2}$ instead if $d=1$. 

To state the theorem, we consider the following assumptions. 
\begin{enumerate}[label=(T\arabic*), wide=0.5em, leftmargin=*]
\item\label{cond:T1} The support $\X$ of $X$ is a compact subset of $\R^d$.
\item\label{cond:T2} There exists a constant $c>0$ such that for all $x\in\X$ and all $0\le r\le\diam(\X)$, 
$\va{B(x,r)\cap\X}\geq c\va{B(x,r)}$, where $\va B$ denotes the Lebesgue measure of a Borel set $B$ in $\R^d$.
\item\label{cond:T3} The random vector $X$ has a density $f$ with respect to the Lebesgue measure on $\R^d$ and
there exist $\underline f,\bar f>0$ such that for almost all $x\in\X,\,\underline f\leq f(x)\leq\bar f$.
\item\label{cond:T4}(\emph{Unconfoundedness}) For almost all $x\in\X,\,W$ is independent of $(Y(0),Y(1))$ 
conditionally on $X=x$.
\item\label{cond:T5} There exists $\eta>0$ such that for almost all $x\in\X,\,\eta<\Pb(W=1\mid X=x)<1-\eta$.
\item\label{cond:T6} The regression functions $g_0$ and $g_1$ are restrictions of two-times
continuously differentiable mappings on an open convex neighbourhood of $\X$.
\item\label{cond:T7} The probability distributions $X\vert W=0$ and $X\vert W=1$ have densities with respect to the
Lebesgue measure on $\R^d$, denoted respectively by $f_0$ and $f_1$ and which are Lipschitz-continuous on $\X$.
\item\label{cond:T8} $\underset{L>0,\,w\in\{0,1\}}\sup\left\{L^{1/d}\int_\X\exp(-L\,\delta(x,\X^c)^d)f_w(x)\;
\mathrm dx\right\}<\infty$.
\end{enumerate}

\begin{thrm}\label{gate_bias}
Suppose that Assumptions \ref{cond:T1} to \ref{cond:T8} hold true, then the conditional bias satisfies 
$$\E[B^2]\leq C\left(\frac kN\right)^{3\wedge 4/d},$$ where $C>0$ depends on $d,\X,\eta,g_0,g_1,f_0$, and $f_1$.
\end{thrm}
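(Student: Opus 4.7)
The plan is to reduce the ATE bias to two instances of the covariate-shift problem controlled by Theorem \ref{better_bias}. First I would split $B$ by the treatment of the target point: set
\begin{align*}
B^{(1)} &:= \frac{1}{N}\sum_{i:W_i=1}\Bigl[g_0(X_i) - \frac{1}{k}\sum_{\ell=1}^k g_0\bigl(X_{\hat i_{\ell,0}(X_i)}\bigr)\Bigr],\\
B^{(0)} &:= \frac{1}{N}\sum_{i:W_i=0}\Bigl[g_1(X_i) - \frac{1}{k}\sum_{\ell=1}^k g_1\bigl(X_{\hat i_{\ell,1}(X_i)}\bigr)\Bigr],
\end{align*}
so that $B = B^{(1)} - B^{(0)}$ and $\E[B^2]\le 2\,\E[(B^{(1)})^2] + 2\,\E[(B^{(0)})^2]$. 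I focus on $B^{(1)}$; the bound for $B^{(0)}$ is symmetric.

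I then condition on the treatment vector $\mathcal G:=\s(W_1,\dots,W_N)$. Given $\mathcal G$, the covariates split into two independent subsamples of deterministic sizes $N_0,N_1$, drawn i.i.d.\ from $f_0$ and $f_1$ respectively. Assumptions \ref{cond:T3} and \ref{cond:T5} force both conditional densities $f_0,f_1$ to be bounded above and below on $\X$, so \ref{cond:reg3} holds. Combined with \ref{cond:T1}, \ref{cond:T2}, \ref{cond:T6}, \ref{cond:T7} and \ref{cond:T8}, every hypothesis of Theorem \ref{better_bias} is met conditionally on $\mathcal G$ for the covariate-shift problem with source density $f_0$, target density $f_1$, and regression function $g=g_0$.

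The core computation expands $\E[(B^{(1)})^2 \mid \mathcal G]$ into diagonal and off-diagonal parts. Writing $R_i := g_0(X_i) - \frac{1}{k}\sum_\ell g_0(X_{\hat i_{\ell,0}(X_i)})$, one has
$$N^2\,\E\bigl[(B^{(1)})^2\bigm|\mathcal G\bigr] = \sum_{i:W_i=1}\E[R_i^2\mid\mathcal G] \;+\; \sum_{\substack{i\neq j\\W_i=W_j=1}}\E[R_iR_j\mid\mathcal G].$$
For the diagonal, the Lipschitz property of $g_0$ (from \ref{cond:T6}) gives $\va{R_i}\le L\,\hat\tau_{k,0}(X_i)$, and Lemma \ref{moments_tau} yields $\E[R_i^2\mid\mathcal G]\le C(k/N_0)^{2/d}$, so the total diagonal contribution is $O((k/N_0)^{2/d}/N)$, which is absorbed into the target rate $(k/N)^{\min\{4/d,3\}}$ for every $d\ge 1$. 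For the off-diagonal, I condition further on the source subsample $\mathcal S := (X_j)_{W_j=0}$. Given $(\mathcal G,\mathcal S)$ the matches are deterministic functions of the individual target point, and the targets $(X_i)_{W_i=1}$ remain i.i.d.\ from $f_1$; hence $R_i,R_j$ are conditionally i.i.d.\ for $i\neq j$, so $\E[R_iR_j\mid\mathcal G,\mathcal S] = (\E[R_i\mid\mathcal G,\mathcal S])^2$.

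A direct calculation identifies $\E[R_i\mid\mathcal G,\mathcal S]$ with $-\E[B_{2,N_0}\mid\mathcal G,\mathcal S]$, where $B_{2,N_0}$ is the conditional bias of the estimator~\eqref{second} in this conditional set-up. Jensen's inequality followed by Theorem \ref{better_bias} applied conditionally on $\mathcal G$ give
$$\E\bigl[(\E[R_i\mid\mathcal G,\mathcal S])^2\bigm|\mathcal G\bigr] \le \E[B_{2,N_0}^2\mid\mathcal G] \le C\Bigl(\frac{k}{N_0+1}\Bigr)^{\min\{4/d,3\}},$$
so the off-diagonal contribution is at most $(N_1/N)^2\,C\,(k/N_0)^{\min\{4/d,3\}}$. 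To remove the conditioning on $\mathcal G$, I invoke \ref{cond:T5}, which gives $\Pb(W=0)\ge\eta$; Hoeffding's inequality then makes $\Pb(N_0 < \eta N/2)$ exponentially small. On the good event I substitute $N_0^{-1}\le 2/(\eta N)$, and on the rare event I use the crude bound $\va{B^{(1)}}\le\sup_{\X}\va{g_0}$, whose contribution is $O(e^{-cN})$ and thus negligible compared to $(k/N)^{\min\{4/d,3\}}$. The delicate point is the cross-term step: one must carefully justify that conditioning on the source together with the treatments reduces the off-diagonal to the square of a conditional bias, so the rate is $(k/N)^{\min\{4/d,3\}}$ rather than a slower $1/N$ variance-type rate; once this is secured, Theorem \ref{better_bias} does the heavy lifting.
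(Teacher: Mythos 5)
Your proof is correct and takes essentially the same route as the paper: split $B$ by treatment group, condition on the treatment vector $W$ so that each term becomes a covariate-shift bias with source $f_0$ (resp.\ $f_1$) and target $f_1$ (resp.\ $f_0$), apply Theorem~\ref{better_bias} conditionally, and remove the conditioning on the random sizes $N_0,N_1$ via binomial concentration. Your explicit diagonal/off-diagonal decomposition of the empirical average over the target points (conditioning further on the source subsample) is a step the paper's terse proof leaves implicit, but it is the same argument, so no further comment is needed.
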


Improving on the order of the conditional bias means that there are more cases where it is negligible compared to
the other two terms than in \citet{lin2023estimation}. The following result is a consequence of Theorem 
\ref{gate_bias} and of the asymptotic normality of $\sqrt{N}(E+V)$. See Lemma C$1$ in \citet{lin2023estimation} for 
such a weak convergence.

\begin{coro}\label{ate_efficiency}
Suppose that Assumptions \ref{cond:T1} to \ref{cond:T8} hold true. If $k$ diverges and if we have one of the
following: \begin{itemize}
\item $d=1$ and $k^3/N^2\xrightarrow{}0$,
\item $d=2$ and $k^2/N\xrightarrow{}0$, or
\item $d=3$ and $k^4/N\xrightarrow{}0$,
\end{itemize}
then the average treatment effect matching estimator with no bias correction attains the semiparametric 
efficiency lower bound \citep{hahn1998role}. More precisely
$$\sqrt N\left(\hat\mu-\mu\right)\Rightarrow\mathcal N\left(0,\sigma^2\right),$$ where
$$\sigma^2:=\E\left[\frac{\sigma_1^2(X_1)}{e(X_1)}+\frac{\sigma_0(X_1)^2}{1-e(X_1)}+\left(g_1(X_1)-g_0(X_1)-\tau
\right)^2\right],$$ and for $(x,w)\in\X\times\{0,1\},\,e(x):=\E\left(W_1\mid X_1=x\right)$ and $\sigma_w(x)^2:=
\Var\left(Y_1(w)\mid X_1=x\right)$.
\end{coro}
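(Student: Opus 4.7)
The plan is to exploit the decomposition $\hat\mu - \mu = E + B + V$ given just before the theorem and to reduce the corollary to a combination of the bias estimate of Theorem~\ref{gate_bias} with the central limit theorem of \citet[Lemma~C1]{lin2023estimation} for $\sqrt N(E+V)$, which already produces the claimed Gaussian limit with the semiparametric efficient variance $\sigma^2$. The only additional work is to verify that the bias term $B$ is negligible, i.e.\ that $\sqrt N\,B$ converges to $0$ in probability, under each of the three dimension-dependent rate conditions on $k$.

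For the bias, Theorem~\ref{gate_bias} yields $\E[B^2]\leq C(k/N)^{3\wedge 4/d}$. First I would compute the resulting bound on $\E[N B^2]$ in each case. For $d=1$ we have $3\wedge 4/d=3$, and $N\cdot(k/N)^3=k^3/N^2\to 0$ by hypothesis. For $d=2$ we have $3\wedge 4/d=2$, and $N\cdot(k/N)^2=k^2/N\to 0$. For $d=3$ we have $3\wedge 4/d=4/3$, and $N\cdot(k/N)^{4/3}=k^{4/3}/N^{1/3}\to 0$ iff $k^4/N\to 0$, which is exactly the stated condition. In all three cases $\sqrt N\,B\to 0$ in $L^2$ and a fortiori in probability.

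Next I would invoke Lemma~C1 of \citet{lin2023estimation}, which under Assumptions~\ref{cond:T1}--\ref{cond:T7} (boundedness of $\X$, positivity and smoothness of the densities, unconfoundedness, overlap, and regularity of the regression functions $g_0,g_1$) together with $k\to\infty$ gives the joint central limit theorem
\begin{equation*}
\sqrt N\,(E+V)\;\Rightarrow\;\mathcal N(0,\sigma^2),
\end{equation*}
with $\sigma^2$ the semiparametric efficiency lower bound of \citet{hahn1998role}. I would note that this step requires only $k\to\infty$ together with $k=o(N)$, which follows from any of the three displayed rate conditions; Assumption~\ref{cond:T8} plays no role here since Lemma~C1 does not address the bias. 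Combining this weak limit with $\sqrt N\,B=o_\Pb(1)$ via Slutsky's theorem gives $\sqrt N(\hat\mu-\mu)\Rightarrow\mathcal N(0,\sigma^2)$, completing the proof.

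The only genuine obstacle is bookkeeping: one must check that the three dimension-dependent hypotheses on $k$ are exactly what is needed to kill $N\,\E[B^2]$ with the exponent $3\wedge 4/d$, and one must confirm that the regularity assumptions of Lemma~C1 of \citet{lin2023estimation} are implied by Assumptions~\ref{cond:T1}--\ref{cond:T7}. Everything else is a direct concatenation of Theorem~\ref{gate_bias} and a previously established central limit theorem, with no new analytic difficulty.
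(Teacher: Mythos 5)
Your proposal is correct and follows essentially the same route as the paper: the paper also obtains the corollary by combining the decomposition $\hat\mu-\mu=E+B+V$ with Theorem~\ref{gate_bias} (whose exponent $3\wedge 4/d$ yields exactly your three rate checks $k^3/N^2$, $k^2/N$, $k^{4}/N$) and the asymptotic normality of $\sqrt N(E+V)$ from Lemma~C1 of \citet{lin2023estimation}, concluding by Slutsky. Your bookkeeping for $N\,\E[B^2]$ in each dimension is accurate, so there is no gap.
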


\paragraph{Note.} When $d=4$ and $k$ is bounded, we still have $\sqrt NB=O_P(1)$ but the stochastic bias is no 
longer negligible and Corollary $3$ does not extend to this case, though the three terms $E,V,B$ are of the same
order.

\section{Analysis of two geometric conditions}\label{geometry}

This section focuses on the following two geometric conditions that address the boundary bias problem:
\begin{enumerate}[wide=0.5em, leftmargin=*]
\item[\ref{cond:reg2}] There exists a constant $c>0$ such that for all $x\in\X$ and all $0\le r\le\diam(\X)$, 
$\va{B(x,r)\cap\X}\geq c\va{B(x,r)}$.
\item[\ref{cond:regA}] $\underset{L>0}\sup\,\left\{L^{1/d}\int_\X\exp(-L\,\delta(x,\X^c)^d)\;\mathrm dQ(x)\right\}<
\infty$.
\end{enumerate}

To show that these two conditions are independent in that none of them imply the other, we consider the following 
two counter-examples. In the following result, $Q$ is defined as the uniform distribution on $\X$. 

\begin{prop}\label{assumption_counter}
\begin{enumerate}
\item Consider $\X:=\{(x,y)\in\R^2\,:\,x\in[0,1],\,y\in[0,x^2]\}$. Then $\X$ satisfies Condition~\ref{cond:regA}
but not Condition~\ref{cond:reg2}.
\item Consider $\X:=\bigcup_{k\in\N^*}\mathcal C_k$, where $\mathcal C_k:=\{x\in\R^2\,:\,a_k\leq\norm x\leq b_k\}$.
We choose $a_k:=(k+1)^{-1}$ and $b_k\in(a_k,a_{k-1})$ such that for all $k\ge 1,\,0<a_{k-1}-b_k\leq\delta/(k+1)^4$
with $\delta$ small enough. Then $\X$ satisfies Condition \ref{cond:reg2} but not Condition \ref{cond:regA}.
\end{enumerate}
\end{prop}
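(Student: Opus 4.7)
The plan is to handle the two items separately; in each case, one condition can be verified by elementary geometry and the other by a layer-cake control of the distance function $\delta(\cdot)$.

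\textbf{Part~$1$.} To disprove~\ref{cond:reg2} I focus on the cusp at the origin: for $r<1$, $x^2+y^2\le r^2$ forces $x\le r$, so $B((0,0),r)\cap\X\subseteq\{(x,y):0\le x\le r,\,0\le y\le x^2\}$, of Lebesgue measure at most $\int_0^r x^2\,\mathrm dx=r^3/3$. Hence $\va{B((0,0),r)\cap\X}/\va{B((0,0),r)}\le r/(3\pi)\to 0$, contradicting~\ref{cond:reg2}. For~\ref{cond:regA}, I use that $\partial\X$ is the union of two segments and the parabolic arc $\{(x,x^2):0\le x\le 1\}$, a rectifiable curve of finite total length $\ell$. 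The standard Minkowski-tube bound gives $\va{\{x\in\X:\delta(x)\le t\}}\le 2\ell t+\pi t^2\le Ct$ for $t\le\diam(\X)$, so the layer-cake identity
\[
\int_\X\exp(-L\delta(x)^2)\,\mathrm dx=2L\int_0^\infty t\exp(-Lt^2)\va{\{\delta\le t\}}\,\mathrm dt\le 2LC\int_0^\infty t^2\exp(-Lt^2)\,\mathrm dt=\frac{C\sqrt\pi}{2\sqrt L}
\]
yields $L^{1/2}\int_\X\exp(-L\delta^2)\,\mathrm dQ$ uniformly bounded in $L>0$, giving~\ref{cond:regA}.

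\textbf{Part~$2$, failure of~\ref{cond:regA}.} Writing $w_k:=b_k-a_k$, for $x$ in the relative interior of $\mathcal C_k$ one has $\delta(x)=\min(\norm x-a_k,b_k-\norm x)$, and polar integration gives
\[
\int_{\mathcal C_k}\exp(-L\delta(x)^2)\,\mathrm dx=2\pi(a_k+b_k)\int_0^{w_k/2}\exp(-Lu^2)\,\mathrm du.
\]
For $\delta$ small, $w_k\ge 1/(2k(k+1))$, so when $k=o(L^{1/4})$ the inner integral is within a constant factor of $\sqrt\pi/(2\sqrt L)$ and $\mathcal C_k$ contributes $\asymp 1/(k\sqrt L)$. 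Summing over $k=1,\dots,\floor{L^{1/4}}$ produces the harmonic tail $\sum 1/(k+1)\asymp\log L$, hence $L^{1/2}\int_\X\exp(-L\delta^2)\,\mathrm dQ\gtrsim\log L\to\infty$.

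\textbf{Part~$2$, verification of~\ref{cond:reg2}.} This is the main obstacle, because four length scales interact: annulus widths $\asymp 1/k^2$, gap widths $\le\delta/(k+1)^4$, the radius $r$, and the position of $x$. For $x\in\mathcal C_{k_0}$ at radius $\rho$, with $d_\pm$ the distances to the two boundary circles of $\mathcal C_{k_0}$, I split into three regimes. (a) If $r\le\max(d_-,d_+)$, which always satisfies $\max(d_-,d_+)\ge w_{k_0}/2$, the corresponding half of $B(x,r)$ lies in $\mathcal C_{k_0}\subset\X$; an explicit circular-segment computation shows this half has area at least $\va{B(x,r)}/2$. (b) If $r\ge 4C\delta/\pi$, where $C$ arises from $\sum_k(\text{gap width})\le C\delta$, bounding each gap intersection by $2r\cdot(\text{width})$ gives $\va{B(x,r)\cap\X}\ge\pi r^2-2C\delta r\ge\va{B(x,r)}/2$. (c) The intermediate window $w_{k_0}/2<r<4C\delta/\pi$ can occur only when $k_0\gtrsim 1/\sqrt\delta$, hence $\rho\lesssim\sqrt\delta$; then only $O(r/\rho^2)$ gaps are reached by $B(x,r)$, each of width $\lesssim\delta\rho^4$, so the total leakage is $O(r^2\delta\rho^2)=O(r^2\delta^2)\le\va{B(x,r)}/2$ for $\delta$ small. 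Choosing $\delta$ sufficiently small (as permitted by the statement) absorbs all accumulated constants and yields~\ref{cond:reg2}.
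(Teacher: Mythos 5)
Parts of your proposal are sound. Part~1 is correct: the ball centred at the cusp $(0,0)$ does kill \ref{cond:reg2}, and your verification of \ref{cond:regA} via the finite length of $\partial\X$, the tube bound $\va{\{x\in\X:\delta(x)\le t\}}\le Ct$ and the layer-cake identity is a genuinely different route from the paper, which instead lower-bounds $\delta((x,y),\X^c)$ by explicit distances to the three boundary pieces; your route amounts to verifying directly the criterion of Proposition~\ref{equivalence} and is perfectly valid (the tube bound for a rectifiable curve deserves one sentence of justification). The failure of \ref{cond:regA} in Part~2 is also correct and is essentially the paper's argument (integration over the inner halves of the annuli), with the added bonus of an exact polar identity and a $\log L$ divergence rate.

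The genuine gap is in Part~2's verification of \ref{cond:reg2}, precisely in your case (c), and it occurs in the configuration that needs the most care. Inside your window one only knows $r>\max(d_-,d_+)$ with $\max(d_-,d_+)\le w_{k_0}=\Theta(\rho^2)$, where $\rho:=\norm x=\Theta(1/k_0)$ can be arbitrarily small; so one can perfectly well have $r\gg\rho$ (e.g.\ $\rho\approx\delta^2$ and $r\approx\delta$). In that situation $B(x,r)$ contains a neighbourhood of the origin, hence meets \emph{infinitely many} gaps (all those of index at least $c/r$), and it also meets outward gaps at radii of order $r$ whose widths are of order $\delta r^4\gg\delta\rho^4$: both your count $O(r/\rho^2)$ and your uniform width bound $\delta\rho^4$ are false there. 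The conclusion survives, because every gap met lies at radius $O(r)$ and the total area of those gaps is $\sum_{m\ge c/r}O(\delta m^{-5})=O(\delta r^4)=O(\delta^3 r^2)$, but this sub-case must be treated separately — it is exactly the configuration the paper isolates in its cases ``$\norm x\le 2\varepsilon$'' and ``$j\ge 2$ circles intersect the ball''. A second, more minor but real, flaw is case (b): the inequality $\va{B(x,r)\cap\X}\ge\pi r^2-2C\delta r$ tacitly assumes $B(x,r)\subset\{\norm z\le b_1\}$ and is false for $x$ near the outer circle or for $r$ comparable to $\diam(\X)$ (at $r=\diam(\X)$ the left-hand side is at most $\pi b_1^2=\pi r^2/4$); repair it by first intersecting with the convex disk $\{\norm z\le b_1\}$ (which obeys \ref{cond:reg2} by convexity, as in Theorem~\ref{geometry_cases}) or by proving the condition for $r\le r_0$ and extending to all $r$ by monotonicity, and note that the correct per-gap bound is $2\pi r\times(\text{width})$ (an arc-length argument), not $2r\times(\text{width})$. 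Finally, in case (a) the \emph{inner} half of the ball is not of area $\ge\va{B(x,r)}/2$ (the circle $\{\norm z=\rho\}$ curves into it); it does, however, contain the ball $B\bigl(x-\tfrac r2\tfrac x{\norm x},\tfrac r2\bigr)$, so the proportion $1/4$ holds, which is all you need.
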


\begin{figure}[tbp]
\centering
\begin{subfigure}[tbp]{0.40\textwidth}
\includegraphics[width=\linewidth]{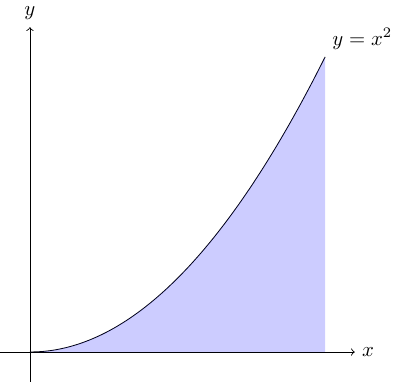}
\subcaption{Example~1}
\label{subfig:under_xsq}
\end{subfigure}
\hfill
\begin{subfigure}[tbp]{0.40\textwidth}
\includegraphics[width=\linewidth]{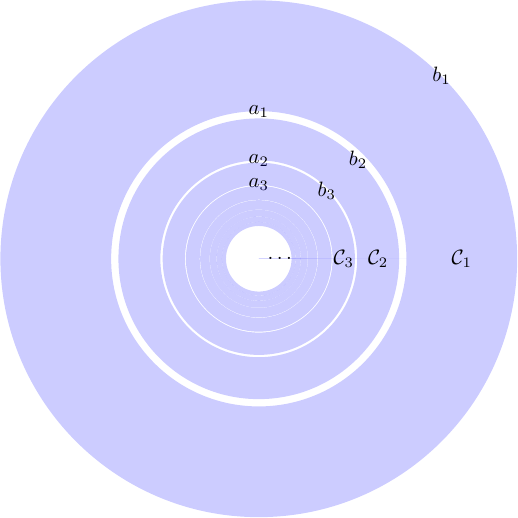}
\subcaption{Example~2}
\label{subfig:rings}
\end{subfigure}
\caption{Illustrations of Proposition~\ref{assumption_counter}}
\label{fig:illust_assumption_counter}
\end{figure}

For illustrations of Proposition~\ref{assumption_counter}, see Figure~\ref{fig:illust_assumption_counter}.

In the next theorem, we describe general settings in which the two conditions are simultaneously satisfied. 

\begin{thrm}\label{geometry_cases}
Suppose that the density $q$ is bounded above, then Conditions \ref{cond:reg2} and \ref{cond:regA} are 
simultaneously satisfied in either of the following two cases: \begin{itemize}
\item $\X$ is a compact convex subset of $\R^d$ with a non-empty interior; or
\item $\X$ is the closure of a bounded open set whose boundary $\partial\X$ is a $(d-1)$-dimensional submanifold 
of $\R^d$ of class $C^1$.
\end{itemize}
Furthermore, both conditions are stable by finite union.
\end{thrm}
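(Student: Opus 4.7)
The plan is to first reduce condition \ref{cond:regA} to a purely geometric statement about inner tubular neighborhoods, and then verify it together with \ref{cond:reg2} in each of the two geometric settings. Since $q \leq \bar q$, the integral in \ref{cond:regA} is at most $\bar q\int_\X e^{-L\delta(x)^d}\,\mathrm dx$. Using the layer-cake identity and the substitution $u\mapsto t=(u/L)^{1/d}$ one obtains
\begin{equation*}
\int_\X e^{-L\delta(x)^d}\,\mathrm dx = Ld \int_0^\infty e^{-Lt^d}\,t^{d-1}\,\va{\X_t}\,\mathrm dt,\qquad \X_t:=\{x\in\X:\delta(x)<t\}.
\end{equation*}
Combined with the crude bound $\va{\X_t}\leq\va\X$ for $t\geq t_0$, a further change of variables $s=Lt^d$ shows that $\sup_{L>0}L^{1/d}\int_\X e^{-L\delta^d}\,\mathrm dx$ is finite provided the linear Minkowski estimate $\va{\X_t}\leq M\,t$ holds uniformly on $[0,t_0]$. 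It then suffices, in each setting, to establish \ref{cond:reg2} and this estimate.

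For a compact convex $\X$ with non-empty interior, fix a ball $B(y_0,\rho)\subset\X$. For any $x\in\X$, the cone $\operatorname{co}(\{x\}\cup B(y_0,\rho))$ lies in $\X$ by convexity and opens an angle at $x$ of order $\rho/\diam\X$; its intersection with $B(x,r)$ for $0<r\leq\diam\X$ has volume at least a multiple of $(\rho/\diam\X)^{d-1}\,\va{B(x,r)}$, yielding \ref{cond:reg2}. The estimate $\va{\X_t}=O(t)$ for small $t$ is classical for convex bodies, e.g. via the coarea formula applied to the $1$-Lipschitz function $\delta$ together with the finiteness of the perimeter $\mathcal H^{d-1}(\partial\X)$.

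For $\X$ the closure of a bounded open set with $C^1$ boundary, I would cover $\partial\X$ by finitely many charts in which $\partial\X$ is the graph of a $C^1$ function and $\X$ lies on one side of it. The tubular neighborhood theorem then supplies $t_0>0$ such that $\Phi:(y,s)\mapsto y-s\nu(y)$ is a $C^1$ diffeomorphism from $\partial\X\times[0,t_0]$ onto $\{x\in\X:\delta(x)\leq t_0\}$, where $\nu$ is the unit outer normal; its Jacobian is uniformly bounded, giving $\va{\X_t}\leq C\,t$ for $t\leq t_0$. The same local graph picture exhibits at each boundary point an inscribed cone of uniform opening contained in $\X$, which combined with the standard nearest-boundary-point reduction for interior points yields \ref{cond:reg2}.

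Finally, for stability under finite union $\X=\bigcup_{i=1}^N\X_i$, condition \ref{cond:reg2} is immediate from $\va{B(x,r)\cap\X}\geq\va{B(x,r)\cap\X_i}$ for any $\X_i\ni x$; for \ref{cond:regA}, the inclusion $\X^c\subseteq\X_i^c$ gives $\delta_\X(x)\geq\delta_{\X_i}(x)$ whenever $x\in\X_i$, so that $\int_\X e^{-L\delta_\X^d}\,\mathrm dx\leq\sum_{i=1}^N\int_{\X_i}e^{-L\delta_{\X_i}^d}\,\mathrm dx$ and the individual $O(L^{-1/d})$ bounds add up. The main obstacle will be the $C^1$ case: making the chart sizes, the cone openings, and the tubular constant $t_0$ quantitatively uniform requires controlling them by the $C^1$ modulus of the local parametrisations of $\partial\X$, which can only be done by exploiting the compactness of $\partial\X$.
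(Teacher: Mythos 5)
Your reduction of \ref{cond:regA} to the inner-tube estimate $\va{\X_t}\le Mt$ (for small $t$) is sound and is essentially the equivalence the paper records in Proposition~\ref{equivalence}; the convex case and the finite-union step are also fine, with your coarea/surface-area-monotonicity argument for the convex tube bound being a valid, classical alternative to the paper's coordinate-section-and-Fubini computation. The genuine gap is in the $C^1$ case: the tubular neighbourhood theorem you invoke is not available at $C^1$ regularity. For a boundary that is only $C^1$, the unit normal $\nu$ is merely continuous, so $\Phi(y,s)=y-s\nu(y)$ is not a $C^1$ map and has no Jacobian to bound; worse, a $C^1$ hypersurface can have zero reach (already the planar graph of $t\mapsto\va t^{3/2}$ does: every point $(0,h)$ with $h>0$ small has two nearest boundary points), so there need not exist \emph{any} $t_0>0$ on which $\Phi$ is injective, let alone a diffeomorphism onto the collar $\{x\in\X:\delta(x)\le t_0\}$. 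Compactness, which you propose as the remedy at the end, only makes chart sizes and cone openings uniform; it cannot create a positive reach, since that is a $C^{1,1}$/$C^2$ phenomenon. As written, the key estimate $\va{\X_t}=O(t)$ in the $C^1$ case is therefore unproven.

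The fix is already contained in your chart set-up and requires no normal collar: in a chart where $\partial\X$ is the graph of a $C^1$ (hence Lipschitz, with constant $K$) function $\varphi$ and $\X$ lies on one side, any $x=(a,y)\in\X$ with $\delta(x)<t$ admits a boundary point $(b,\varphi(b))$ within distance $t$, so $\va{y-\varphi(a)}\le\va{y-\varphi(b)}+K\norm{a-b}\le(1+K)t$; thus the portion of $\X_t$ seen in the chart lies in a slab of thickness $2(1+K)t$ over a bounded base, and summing over a finite subcover (plus the points at uniformly positive distance from $\partial\X$) gives $\va{\X_t}\le Ct$. This is in substance what the paper does for \ref{cond:regA}: it bounds $\delta(x)$ from below by the vertical distance to the local graph, up to the factor $(1+K_j)^{-1}$, and then integrates the exponential in the graph coordinate via Fubini. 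Your treatment of \ref{cond:reg2} in both cases (inscribed cone over a fixed interior ball in the convex case; cone of slope $K$ under the local graph in the $C^1$ case) matches the paper's argument, modulo the routine truncation when $r$ exceeds the cone height.
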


Let us comment on these results. Under convexity or smooth boundary assumptions, as formulated in the statement of 
Theorem \ref{geometry_cases}, Condition \ref{cond:reg2} is satisfied since it is possible to show that the trace of 
a ball on $\X$ always includes a truncated cone with a fixed angle. The volume of this
cone is then proportional to that of the ball. See the proof of the theorem for details. In 
Figure~\ref{fig:illust_assumption_counter}, we see that when the boundary has a spike, it is no more possible to 
include such a cone in the ball and Condition \ref{cond:reg2} may be not satisfied. 

In the proof of Theorem \ref{geometry_cases}, we also show that the same assumptions entail condition 
\ref{cond:regA} but using a different argument. When a point $x\in\X$ is close to the boundary denoted by 
$\partial \X$, its distance to the boundary can be lower bounded by the distance to the boundary along one of the 
$d$ axis of $\R^d$ with a finite number of possible boundary points, i.e. $$\inf_{z\in\partial\X}\norm{x-z}\geq
\eta\min_{1\leq i\leq d}\min_{s=1,\dots,N}\va{x_i-g_s((x_j)_{j\neq i})},$$
for some real-valued mappings $g_1,\dots,g_N$ and a positive constant $\eta$. This is sufficient to upper-bound 
the integral and check Condition \ref{cond:regA}. However, there exist some pathological examples with infinitely 
many connected components, such as the rings example given in Proposition \ref{assumption_counter}, for which the 
previous argument cannot be used and Condition \ref{cond:regA} does not hold. 

Overall, for practical applications, both conditions seem to be satisfied under some sufficiently general 
regularity assumptions on the domain $\X$. 

Finally, we prove that condition \ref{cond:regA} is equivalent to a constraint on tube volumes for the boundary of
the domain $\X$.

\begin{prop}\label{equivalence}
Suppose that Assumption \ref{cond:reg1} holds true and set for $\epsilon>0$, 
$$A_\epsilon:=\left\{x\in\X:\delta(x,\X^c)\leq\epsilon\right\}.$$
Then condition \ref{cond:regA} is equivalent to $\varlimsup_{\epsilon\searrow 0}Q(A_\epsilon)/\epsilon<\infty$.
\end{prop}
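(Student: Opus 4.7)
The plan is to reduce the equivalence to a standard Tauberian-style relation between a Laplace transform and a small-$s$ tail estimate. Introducing the random variable $T := \delta(X,\X^c)^d$ with $X\sim Q$, I rewrite Condition \ref{cond:regA} as
$$\sup_{L>0} L^{1/d}\,\E[e^{-LT}] < \infty,$$
and the tube condition as $\varlimsup_{s\searrow 0} \Pb(T\leq s)/s^{1/d}<\infty$, since $Q(A_\epsilon)=\Pb(T\leq\epsilon^d)$ and the change of variable $s=\epsilon^d$ turns $Q(A_\epsilon)/\epsilon$ into $\Pb(T\leq s)/s^{1/d}$. Compactness of $\X$ ensures $Q(A_\epsilon)=1$ for $\epsilon\geq\diam(\X)$, so only the behaviour near $0$ matters.

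For the easy direction (Condition \ref{cond:regA} $\Rightarrow$ tube estimate), I apply the Chernoff-type inequality
$$\Pb(T\leq s) = \Pb(e^{-LT}\geq e^{-Ls}) \leq e^{Ls}\,\E[e^{-LT}] \leq M\,e^{Ls}\,L^{-1/d},$$
where $M$ denotes the supremum in Condition \ref{cond:regA}. Optimising by taking $L=1/s$ yields $\Pb(T\leq s)\leq eM\,s^{1/d}$ for all $s>0$, which immediately gives the desired $\varlimsup$.

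For the converse, I use the layer-cake identity
$$\E[e^{-LT}] = \int_0^\infty L\,e^{-Ls}\,\Pb(T\leq s)\,\mathrm ds.$$
Under the tube estimate there exist $s_0>0$ and $C>0$ with $\Pb(T\leq s)\leq C s^{1/d}$ for $s\leq s_0$. Splitting the integral at $s_0$ and using $\Pb(T\leq s)\leq 1$ on $[s_0,\infty)$ gives
$$\E[e^{-LT}] \leq C\int_0^\infty L\,e^{-Ls}\,s^{1/d}\,\mathrm ds + \int_{s_0}^\infty L\,e^{-Ls}\,\mathrm ds = C\,\Gamma(1+1/d)\,L^{-1/d} + e^{-Ls_0},$$
so that $L^{1/d}\E[e^{-LT}]\leq C\,\Gamma(1+1/d)+L^{1/d}e^{-Ls_0}$. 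The remainder $L^{1/d}e^{-Ls_0}$ is bounded for $L\geq 1$ (exponential decay beats polynomial growth) and obviously bounded for $L\in(0,1]$, which yields the uniform bound in $L$ required by Condition \ref{cond:regA}.

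I don't expect a real obstacle here: once the problem is rephrased in terms of $T$, both directions are one-line Laplace transform computations. The only point requiring a bit of care is that the tube hypothesis is stated as a $\varlimsup$ and therefore only provides the polynomial bound on a neighbourhood of $0$; this is handled cleanly by splitting the Fubini integral at $s_0$ and absorbing the tail into an exponentially small remainder.
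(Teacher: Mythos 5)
Your proposal is correct. The direction ``\ref{cond:regA} $\Rightarrow$ tube estimate'' is in substance identical to the paper's: the paper plugs $L=\epsilon^{-d}$ into the integral and keeps only the contribution of $A_\epsilon$, which is exactly your Chernoff/Markov bound with $L=1/s$. The other (harder) direction is where you diverge: the paper bounds $L^{1/d}\int_{\X\cap\{0<\delta\le 1\}}e^{-L\delta(x)^d}\,\mathrm dQ(x)$ by slicing $\X$ into dyadic shells $\{2^{-r-1}<\delta\le 2^{-r}\}$, using $Q(A_{2^{-r}})\le C2^{-r}$ on each shell and comparing the resulting series with $\int_0^\infty e^{-Ly^d}\,\mathrm dy$, whereas you pass to the variable $T=\delta(X,\X^c)^d$ and use the layer-cake identity $\E[e^{-LT}]=\int_0^\infty Le^{-Ls}\,\Pb(T\le s)\,\mathrm ds$, split at $s_0$, and compute the main term exactly as $C\,\Gamma(1+1/d)\,L^{-1/d}$ with an exponentially small remainder $e^{-Ls_0}$. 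Both arguments rest on the same underlying fact (a linear bound on the distribution function of $\delta$ near $0$ controls the Laplace-type integral), but yours is a cleaner Tauberian-style computation with an explicit constant, while the paper's dyadic summation avoids the Fubini/layer-cake step and treats the near-boundary region directly. You also handle correctly the only delicate point, namely that the $\varlimsup$ hypothesis gives the bound $\Pb(T\le s)\le Cs^{1/d}$ only for $s\le s_0$; the paper deals with the same issue by restricting to $\{0<\delta\le 1\}$ (and noting that $Q(\{\delta=0\})=0$ under the hypothesis), which your splitting at $s_0$ and the trivial bound $\Pb(T\le s)\le 1$ replace without loss.
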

From this result, one can deduce that condition \ref{cond:regA} is valid for any probability measure $Q$ with a
bounded density if and only if $\lambda_d\left(A_\epsilon\right)=O(\epsilon)$ where $\lambda_d$ denotes the Lebesgue 
measure on $\R^d$. Let us mention that there is a link between this latter condition and the so-called tube formula
of \citet{weyl1939volume} who expressed for a smooth submanifold $\partial\X=\X\setminus\mathring\X$ the volume of a 
tube of radius $\epsilon$ around $\partial\X$ as a polynomial in $\epsilon$. For a submanifold of dimension $d-1$,
the first term of this expression is proportional to $\epsilon$. The equivalence given in Proposition 
\ref{equivalence} can then be used to get another proof for checking \ref{cond:regA} in the submanifold case, a 
result already given in Theorem \ref{geometry_cases}.

\section{Extension to local polynomials}\label{local}

In this section, we discuss a more general estimator introduced by \citet{holzmann2024multivariate} to reduce the
bias when the regression function $g$ has more regularity. They use local polynomials of order $L\in\N$ to
approximate the regression function. In their theorem, \citet{holzmann2024multivariate} suppose that the supports
are convex and that the target support is inside the interior of the source support. We will show that in fact,
Condition \ref{cond:reg2} is all we need. \\ 
We set $\N_{\leq L}^d$ the set of $d$-tuples of non-negative integers whose sum does not exceed $L$. Let 
$\Psi\colon\ieu[K^*]\to\N_{\leq L}^d$ be a one-to-one and onto function with $\Psi(1)=(0,\dots,0)$, where $K^*$ is
the cardinality of $\N_{\le L}^d$. For $j=1,\dots,K^*$, we also write $\zeta_j(x,z):=(z-x)^{\Psi(j)}:=\prod_{i=1}^d
(z_i-x_i)^{(\Psi(j))_i}$, which is a local monomial in $z$ in $d$ coordinates, of degree at most $L$, and centered 
at point $x$. Finally, $\zeta(x,z)=(\zeta_j(x,z))_{1\le j\le K^*}$ will denote the vector in $\R^{K^*}$ composed 
of all such local monomials.

The local estimator $\hat g(x)$ of the regression function is the first coordinate of the least square estimator
$$\hat\gamma:=\underset{\gamma\in\R^{K^*}}\argmin\sum_{i=1}^n\left(h(X_i,Y_i)-\gamma^\top\zeta(x,X_i)\right)^2
\1{\norm{X_i-x}\le\ta(x)}.$$
In other words, $\hat g(x)=e_1^\top M(x)^{-1}\sum_{i=1}^nh(X_i,Y_i)\zeta(x,X_i)\,\1{\norm{X_i-x}\le\ta(x)})$,
where $e_1$ is the the first vector in the canonical basis of $\R^{K^*}$ and $M(x)$ is the matrix 
\begin{align*}
M(x) & :=\sum_{i=1}^n\zeta(x,X_i)\zeta(x,X_i)^\top\1{\norm{X_i-x}\le\ta(x)} \\ & =\left(\sum_{i=1}^n(X_i-x)^{
\Psi(j)+\Psi(j')}\,\1{\norm{X_i-x}\le\ta(x)}\right)_{1\le j,j'\le K^*}.    
\end{align*}

The final estimator is then $\hat e_L(h):=\frac 1m\sum_{j=1}^m\hat g(X_j^*)$. Note that when $L=0,\,\zeta(x,z)$ is
the constant vector $1\in\R,\,M(x)=k$, and $\hat g(x)$ coincides with the classical estimator $\hat g_n(x)$ 
defined in section \ref{intro}. In what follows, we will need the constant $D:=\sum_{i=1}^Li\binom i{d+i-1}$.
Adapting the nice proof technique of \citet{holzmann2024multivariate}, we obtain the following result. 

\begin{thrm}\label{local_bias_control}
Suppose that Assumptions \ref{cond:reg1} to \ref{cond:reg3} hold true, that the source density $p$ is upper 
bounded by $\psup$, and that the function $g$ have derivatives of order $l\in\N$ which are all H\"older-continuous 
of order $\beta\in(0,1]$ with constant $\Lambda>0$. Suppose also that $k\geq(2D+1)K^*+1$, then setting $L:=l$, 
there exists a constant $C>0$ depending on $d,l$, and $P$, such that the conditional bias $B_n:=\E[\hat e_L(h)-
e(h)\mid X]$ is controlled by $$\E[B_n^2]\leq C\Lambda^2\left(\frac kn\right)^{\frac{2(l+\beta)}d}.$$
\end{thrm}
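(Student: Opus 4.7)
The plan is to adapt the local polynomial regression argument of \citet{holzmann2024multivariate}, replacing their convex-with-interior-containment hypothesis on the supports by the weaker condition \ref{cond:reg2}. The key identity is the reproducing property of the local polynomial estimator: since $\hat g(x)=e_1^\top M(x)^{-1}\sum_i h(X_i,Y_i)\zeta(x,X_i)\1{\norm{X_i-x}\le\ta(x)}$ is exact on $z\mapsto\gamma^\top\zeta(x,z)$, writing the Taylor expansion $g(z)=\gamma(x)^\top\zeta(x,z)+R(x,z)$ with $|R(x,z)|\le C_l\Lambda\norm{z-x}^{l+\beta}$ (from the Hölder control on $D^lg$) gives
$$b(x):=\E[\hat g(x)\mid X_{1:n}]-g(x)=e_1^\top M(x)^{-1}\sum_{i=1}^n R(x,X_i)\,\zeta(x,X_i)\,\1{\norm{X_i-x}\le\ta(x)}.$$
Since the $X_j^*$ are independent of $X_{1:n}\sim Q$, one has $B_n=\int b(x)\,\mathrm dQ(x)$, so by Jensen $\E[B_n^2]\le\int\E[b(x)^2]\,\mathrm dQ(x)$, and it suffices to bound $\E[b(x)^2]$ uniformly in $x$.

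To make the scaling transparent, introduce the diagonal matrix $D_x:=\operatorname{diag}(\ta(x)^{|\Psi(j)|})$ and the rescaled vectors $\tilde\zeta(x,z):=D_x^{-1}\zeta(x,z)$, whose components are all bounded by $1$ on the $k$-NN ball. Because $\Psi(1)=(0,\dots,0)$, we have $e_1^\top D_x^{-1}=e_1^\top$, so with $\tilde M(x):=D_x^{-1}M(x)D_x^{-1}$,
$$b(x)=e_1^\top\tilde M(x)^{-1}\sum_{i=1}^n R(x,X_i)\,\tilde\zeta(x,X_i)\,\1{\norm{X_i-x}\le\ta(x)}.$$
The sum contains $k$ terms each of absolute value at most $C_l\Lambda\,\ta(x)^{l+\beta}$, so $|b(x)|\le C_l\Lambda\,k\,\ta(x)^{l+\beta}\,\norm{\tilde M(x)^{-1}}$.

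The main technical step, and the principal obstacle, is to establish the deterministic lower bound $\lambda_{\min}(\tilde M(x))\ge c_0\,k$ on an event of sufficiently high probability, uniformly in $x\in\X$. This is where condition \ref{cond:reg2} substitutes for the convex-interior hypothesis of \citet{holzmann2024multivariate}: the set $\ta(x)^{-1}(B(x,\ta(x))\cap\X-x)\subset B(0,1)$ has Lebesgue measure $\ge c\,|B(0,1)|$ and on it the rescaled density is $\ge\pinf$, so the conditional law of $(X_i-x)/\ta(x)$ given $X_i$ in the $k$-NN ball is supported on a set rich enough that no non-zero polynomial of degree $\le l$ vanishes on it. This yields a population-level Gram matrix with smallest eigenvalue bounded below by a constant depending only on $(c,\pinf,l,d)$; the assumption $k\ge(2D+1)K^*+1$ is precisely what is needed to transfer this from the population to the sample via a Vandermonde/pigeonhole argument, by guaranteeing that at least $K^*$ of the rescaled nearest neighbours sit in well-separated cells.

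Once $\norm{\tilde M(x)^{-1}}\le C_0/k$ is in hand, the factor of $k$ in the pointwise estimate cancels and we obtain $|b(x)|\le C\Lambda\,\ta(x)^{l+\beta}$. Squaring and invoking Lemma~\ref{moments_tau} for the moment bound $\E[\ta(x)^{2(l+\beta)}]\le C(k/n)^{2(l+\beta)/d}$, integration against $Q$ yields
$$\E[B_n^2]\le\int\E[b(x)^2]\,\mathrm dQ(x)\le C\Lambda^2\left(\frac kn\right)^{2(l+\beta)/d},$$
which closes the proof. Any residual contribution from the rare event where $\tilde M(x)$ is near-singular is absorbed via the almost-sure upper bound $|b(x)|\le C\operatorname{diam}(\X)^{l+\beta}\Lambda$ together with an exponentially small probability coming from concentration of the empirical measure under \ref{cond:reg2}.
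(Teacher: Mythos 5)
Your setup — the reproducing property of the local polynomial fit, the Taylor remainder bound $\va{R(x,X_i)}\le C_l\Lambda\ta(x)^{l+\beta}$, the reduction of the problem to controlling the inverted (rescaled) design matrix, and the observation that \ref{cond:reg2} supplies the density lower bound on the normalised NN ball — is exactly the paper's strategy. The gap is in your main technical step. You propose to prove $\lambda_{\min}(\tilde M(x))\ge c_0k$ ``on an event of sufficiently high probability'' and to absorb the complement using an almost-sure bound $\va{b(x)}\le C\Lambda\diam(\X)^{l+\beta}$ plus ``exponentially small probability coming from concentration of the empirical measure.'' Neither half of this works. First, there is no deterministic bound on $b(x)$: the local polynomial weights $w_i=e_1^\top M^{-1}(x)\zeta(x,X_i)\1{\norm{X_i-x}\le\ta(x)}$ sum to one but are signed and can be arbitrarily large when $M(x)$ is ill-conditioned, so on the bad event $b(x)$ is not bounded by $\Lambda\diam(\X)^{l+\beta}$ times a constant. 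Second, the bad event is not exponentially rare: the theorem is stated (and is most interesting) for fixed $k\ge(2D+1)K^*+1$, in which case only $k-1$ normalised neighbours enter $\tilde M(x)$ and the probability that they lie near a degree-$l$ algebraic hypersurface is only polynomially small in the degeneracy level — the paper's own computation gives a tail of the form $\Pb(\inf_{\tilde P}\sum_a\tilde P(V_a)^2\le t^{-1}\mid\ta(x))\le Ct^{-1/(2D)}$, which does not shrink with $n$. Concentration of the empirical measure cannot help when $k$ is a constant.

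This is precisely why the paper does not attempt a high-probability eigenvalue bound but instead proves the conditional expectation bound $\E[e_1^\top M^{-1}(x)e_1\mid\ta(x)]\le C_\X/k$ (Lemma \ref{local_key_lemma}): one must show that $e_1^\top M^{-1}(x)e_1$, which has no almost-sure bound, is nevertheless integrable with the right order in $k$. The mechanism is a determinant lower bound for the Vandermonde-type matrix $\Xi(V)$ via the roots of $s\mapsto\det\Xi(w+s\theta)$, combined with the conditional density of the normalised neighbours (Lemma \ref{density_V}, where \ref{cond:reg2} enters through the normalising constant $F_x(\ta(x))\ge c\pinf\vab\ta(x)^d$), giving the polynomial small-ball estimate above, and then the anti-concentration Lemma \ref{anti_concentration} applied to the $\nu=\floor{(k-1)/K^*}$ independent blocks. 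Your reading of the condition $k\ge(2D+1)K^*+1$ as a pigeonhole guaranteeing ``well-separated cells'' is also not what is needed: its actual role is to make $\nu>2D$, so that the tail exponent $b\nu$ with $b=1/(2D)$ exceeds $1$ and the integral $\int t^{-b\nu}\,\mathrm dt$ defining $\E[e_1^\top M^{-1}(x)e_1\mid\ta(x)]$ converges with the order $1/k$. Once that expectation bound is available, your final step (conditioning on $\ta(x)$ and invoking Lemma \ref{moments_tau}) does close the argument, as in the paper.
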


The conditional variance of the estimator $\hat e_L(h)$ was already studied by \citet{holzmann2024multivariate},
Theorem 2, who showed it has the parametric rate $n^{-1}+m^{-1}$. We therefore obtain a parametric rate for
$\hat e_L(h)$ with fixed $k$ as soon as $l+\beta\geq d/2$.

\bigskip

Extending Theorem 2 from \citet{holzmann2024multivariate} to a situation where we did not assume any inclusion 
between the source and the target supports implies that their local polynomial estimator can also be used in the 
context of global average treatment effect (ATE) estimation. 
We keep the notation from Section \ref{ate} and we define, for $w\in\{0,1\}$ and $x\in\X$, \begin{align*}
\hat g_w(x) & :=e_1^\top\,\underset{\gamma\in\R^{K^*}}\argmin\sum_{W_i=w}\left(Y_i-\gamma^\top\zeta(x,X_i)\right)^2
\1{\norm{X_i-x}\le\hat\tau_{k,w}(x)},\text{ and} \\ \hat\mu & :=\frac 1N\sum_{i=1}^N(2W_i-1)(Y_i-\hat g_{
1-W_i}(X_i)).
\end{align*}
Define the conditional bias \begin{align*}
B & :=\frac 1N\sum_{i=1}^N(2W_i-1)(g_{1-W_i}(X_i)-\bar g_{1-W_i}(X_i)),\text{ where } \\ \bar g_w(x) & :=e_1^\top
M_w(x)^{-1}\sum_{W_i=w}g_w(X_i)\,\zeta(x,X_i)\,\1{\norm{X_i-x}\leq\hat\tau_{k,w}(x)},
\end{align*}
then proceeding as in the proof of Theorem \ref{gate_bias}, the previous theorem leads to the following result.

\begin{coro}\label{local_ate}
Suppose that Conditions \ref{cond:T1} to \ref{cond:T5} hold true, that the regression functions $g_0$ and 
$g_1$ are restrictions of mappings on an open convex neighbourhood of $\X$ having derivatives of order $l\in\N$ 
which are all H\"older continuous of order $\beta\in(0,1]$. Suppose further that $k\geq(2D+1)K^*+1$, then
there exists a constant $C>0$ such that the conditional bias of the estimator $\hat\mu$ for the average 
treatment effect satisfies $$\E[B^2]\le C\left(\frac kN\right)^{2(l+\beta)/d}.$$
\end{coro}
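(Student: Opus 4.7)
The plan is to mirror the proof of Theorem~\ref{gate_bias}, replacing its appeal to the matching-estimator bias bound by the local-polynomial bias bound Theorem~\ref{local_bias_control}. First, split the conditional bias along treatment status:
$$
B = B_0 + B_1, \qquad B_w := \frac{(-1)^{w+1}}{N}\sum_{i:W_i=w}\bigl(g_{1-w}(X_i) - \bar g_{1-w}(X_i)\bigr),
$$
so that $\E[B^2]\leq 2(\E[B_0^2]+\E[B_1^2])$ and the two pieces can be treated symmetrically.

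Next, condition on $(W_1,\dots,W_N)$. Given these labels, the covariates in group $\{i:W_i=v\}$ form an i.i.d.\ sample from $X\mid W=v$, whose density $f_v(x)=f(x)\Pb(W=v\mid X=x)/\Pb(W=v)$ is bounded above and away from zero on $\X$ by \ref{cond:T3} and \ref{cond:T5}. Together with \ref{cond:T1} and \ref{cond:T2}, this places $B_w$ exactly in the framework of Theorem~\ref{local_bias_control}: the source sample of size $N_{1-w}$ lives in group $1-w$ (with density $f_{1-w}$ playing the role of $p$), the target sample of size $N_w$ lives in group $w$, and the regression function to estimate is $g_{1-w}$, whose derivatives of order $l$ are $\beta$-H\"older by assumption. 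Applying Theorem~\ref{local_bias_control} conditionally yields, whenever $N_{1-w}\ge(2D+1)K^*+1$,
$$
\E\!\left[\left(\frac{1}{N_w}\sum_{i:W_i=w}(g_{1-w}(X_i)-\bar g_{1-w}(X_i))\right)^{\!2}\Bigm|W_1,\dots,W_N\right]\leq C\left(\frac{k}{N_{1-w}}\right)^{\!2(l+\beta)/d},
$$
and bringing back the factor $(N_w/N)^2\leq 1$ gives the same control for $\E[B_w^2\mid W]$.

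It remains to remove the conditioning on the treatment vector. By \ref{cond:T5} one has $\Pb(W=v)\in(\eta,1-\eta)$, so $N_{1-w}$ is binomial with mean at least $\eta N$, and Hoeffding's inequality gives $\Pb(N_{1-w}<\eta N/2)\leq e^{-cN}$. On the good event $\{N_{1-w}\ge\eta N/2\}$, $(k/N_{1-w})^{2(l+\beta)/d}\leq(2/\eta)^{2(l+\beta)/d}(k/N)^{2(l+\beta)/d}$; on its complement, $B_w$ is bounded by a deterministic constant (using that $g_0,g_1$ are continuous on the compact set $\X$ and that the weighted design matrix $M_w(x)$ is uniformly well-conditioned when $k\ge(2D+1)K^*+1$ under \ref{cond:T2}--\ref{cond:T3}), and the exponentially small probability swallows this crude bound. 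Taking expectations delivers $\E[B_w^2]\leq C'(k/N)^{2(l+\beta)/d}$, hence the claimed bound on $\E[B^2]$.

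The main obstacle I anticipate is the clean handling of $\E[(k/N_{1-w})^{2(l+\beta)/d}]$ when $N_{1-w}$ is random and possibly small: either one needs an a priori deterministic upper bound on $|\bar g_{1-w}|$, which requires a uniform lower bound on the smallest eigenvalue of $M_{1-w}(x)$ extracted from the proof of Theorem~\ref{local_bias_control}, or one uses a negative-moment estimate of the form $\E[N_{1-w}^{-s}\mathbf 1\{N_{1-w}\ge 1\}]\leq C N^{-s}$ for any $s>0$, which follows from $\Pb(W=1-w)>0$ and bypasses the crude-bound step. Either route is standard but requires some care in order not to lose any power of $k/N$.
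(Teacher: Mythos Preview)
Your proposal is correct and follows essentially the same route as the paper: split $B$ by treatment status, condition on $W$, apply Theorem~\ref{local_bias_control} to each piece with source sample of size $N_{1-w}$ and target sample of size $N_w$, then remove the conditioning by controlling the random $N_{1-w}$. The paper resolves the obstacle you flag by taking your second alternative---the negative-moment estimate $\E[N^aN_w^{-a}\1{N_w\ge 1}]\le C$---established via the Chernoff bound exactly as in the proof of Theorem~\ref{gate_bias}, rather than the deterministic-bound route; so your concern about the well-conditioning of $M_w(x)$ on the bad event can be sidestepped entirely.
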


As stated above, the conditional variance $E+V$ (as defined in Section \ref{ate}) of the estimator $\hat\mu$ was
shown by \citet{holzmann2024multivariate} to have the parametric rate $N^{-1}$. Along with the previous corollary, 
it proves that when $l+\beta\geq d/2$, the estimator $\hat\mu$ with fixed $k$ has the rate of convergence $N^{-1}$.

\section{Numerical experiments}\label{numerical}

In this section, we conduct numerical experiments to study how the estimators perform with different dimensions 
and sample sizes.

\paragraph{Methods that we compare}
We use the estimators~\eqref{first} and \eqref{second} with $k=1$ as well as the local polynomial fitting 
estimator, proposed by \citet{holzmann2024multivariate}, of order $1$. We refer to those methods as the 
1-Nearest-Neighbour Conditional Sampling Adaptation (\textbf{1NN-CSA}), the 1-Nearest-Neighbour Weighting 
(\textbf{1NN-W}), and the $k$-Nearest-Neighbour Polynomial fitting (\textbf{kNN-Poly}), respectively.
kNN-Poly of order $1$ amounts to a local linear fitting with a bias term, meaning that $d+1$ parameters are 
fitted. For this method, we test two variants with different $k$'s.
\begin{description}
\item[kNN-Poly-LB:] kNN-Poly with even smaller $k=2d^2+3d+3$. This is the minimum $k$ satisfying the condition of 
\citet[Theorems~1-2]{holzmann2024multivariate} for the local polynomial fitting of order $1$. LB stands for 
``Lower Bound''. We choose the minimum of such $k$'s because \citet{holzmann2024multivariate} reported that 
smaller $k$'s tend to show better results.
\item[kNN-Poly-d+5:] kNN-Poly with $k=d+5$.
This choice of $k$ is very close to $k=d+1$, the smallest $k$ for which the polynomial fitting is possibly well-posed.
kNN-Poly-d+5 includes a few more points in the fitting for better stability.
\item[NoCorrection]: A na\"{i}ve estimate without covariate shift adaptation, given by the empirical average over the source sample $\frac 1n\sum_{i=1}^nh(X_i,Y_i)$.
\item[OracleY]: 
A hypothetical estimate with the oracle access to the hidden labels $Y_j^*$, given by
the empirical average over the target sample $\frac 1m\sum_{j=1}^mh(X_j^*,Y_j^*)$.
\end{description}

\paragraph{Setups}
We conduct experiments under two setups described as follows.
For both setups, we let each of $(X^{(2)},\dots,X^{(d)})$ and $(X^{*(2)},\dots,X^{*(d)})$ be distributed uniformly 
over $[-1,1]^{d-1}$,
where $(\cdot)^{(j)}$ denotes the $j$th coordinate of the given vector.
$X^{(1)},\dots,X^{(d)}$, $X^{*(1)},\dots, X^{*(d)}$ are all independent.
We define the function $h$ by $h(x,y):=(x^{(1)}+y)^2$.

The differences between the setups are the conditional distribution of $Y$ given $X$ (equivalently, that of $Y^*$ 
given $X^*$) and the distributions of $X^{(1)}$ and $X^{*(1)}$, as summarized in Table~\ref{tab:experiment_setups}
and Figure~\ref{fig:trnorm_data_visualization} and as detailed below.
\begin{description}
\item[Setup TN0.5-Cubic:]
We let $X^{(1)}$ follow $\operatorname{TN}(-0.5,0.5,[-1,1])$ and $X^{*(1)}$ follow \\ $\operatorname{TN}(0.5,0.5,
[-1,1])$, where  $\operatorname{TN}(\mu,\s,S)$ denotes the truncated normal distribution for $(\mu,\s)\in \R
\times(0,\infty)$ and $S\subset\R$, defined as the distribution of $V\1{V\in S}$, $V$ being a normal variable with 
mean $\mu$ and standard deviation $\s$. The response is generated according to $Y=\vert{X^{(1)}}\vert^3+
\varepsilon$, where $\varepsilon$ a normal noise variable with mean $0$ and standard deviation $0.1$, independent 
of all the other variables.

\item[Setup TN0.5-Cubic-Reversed:]
The conditional distribution of $Y$ given $X$ is the same as in Setup TN0.5-Cubic.
However, we switch the locations of the source and the target covariate distributions, that is, $X^{(1)}$ follows
$\operatorname{TN}(0.5, 0.5,[-1,1])$ and $X^{*(1)}$ follows $\operatorname{TN}(-0.5,0.5, [-1,1])$.
\end{description}
Note that the supports of the target distribution is not included in the interior of the source distribution as in 
the literature~\citep{holzmann2024multivariate}, but Theorem~\ref{geometry_cases} immediately ensures that they 
satisfy our new conditions~\ref{cond:reg2} and \ref{cond:regA} as well as the other conditions for 
Theorems~\ref{better_bias} and \ref{local_bias_control}.
Note that under these setups, the function $g(x)=(x^{(1)}+\vert{x^{(1)}}\vert^3)^2 + 0.01$ is two-times 
continuously differentiable~\ref{cond:reg62}.

\begin{table}[bp]
\centering
\small
\caption{Setups of experiments.}
\label{tab:experiment_setups}
\begin{tabular}{lccl}
Name of setup & Source distribution of $[X]_1$ & Target distribution of $[X^*]_1$ & Type of response \\
\hline
       TN0.5-Cubic  & $\operatorname{TN}(-0.5, 0.5, [-1, 1])$ & $\operatorname{TN}(0.5, 0.5, [-1, 1])$ & $Y = \va{[X]_1}^3 + \varepsilon$  \\
       TN0.5-Cubic-Reversed  & $\operatorname{TN}(0.5, 0.5, [-1, 1])$ & $\operatorname{TN}(-0.5, 0.5, [-1, 1])$ & $Y = \va{[X]_1}^3 + \varepsilon$
    \end{tabular}
\end{table}

\begin{figure}
    \centering
    \begin{subfigure}[t]{0.43\linewidth}
        \centering
        \includegraphics[width=\linewidth]{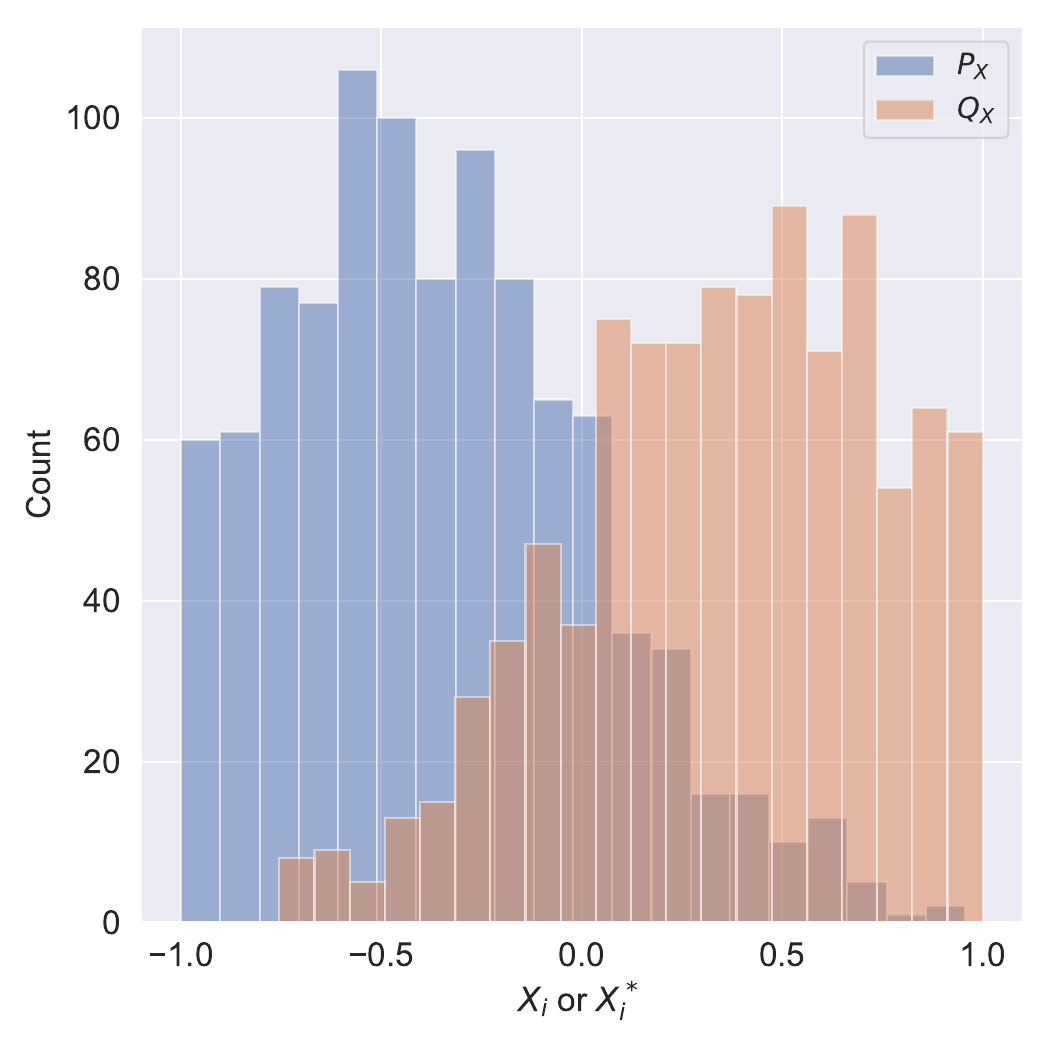}
        \caption{
        Setup TN0.5-Cubic.
        }
        \label{fig:trnorm0.5_data_visualization_x}
    \end{subfigure}
    \hfill
    \begin{subfigure}[t]{0.43\linewidth}
        \centering
        \includegraphics[width=\linewidth]{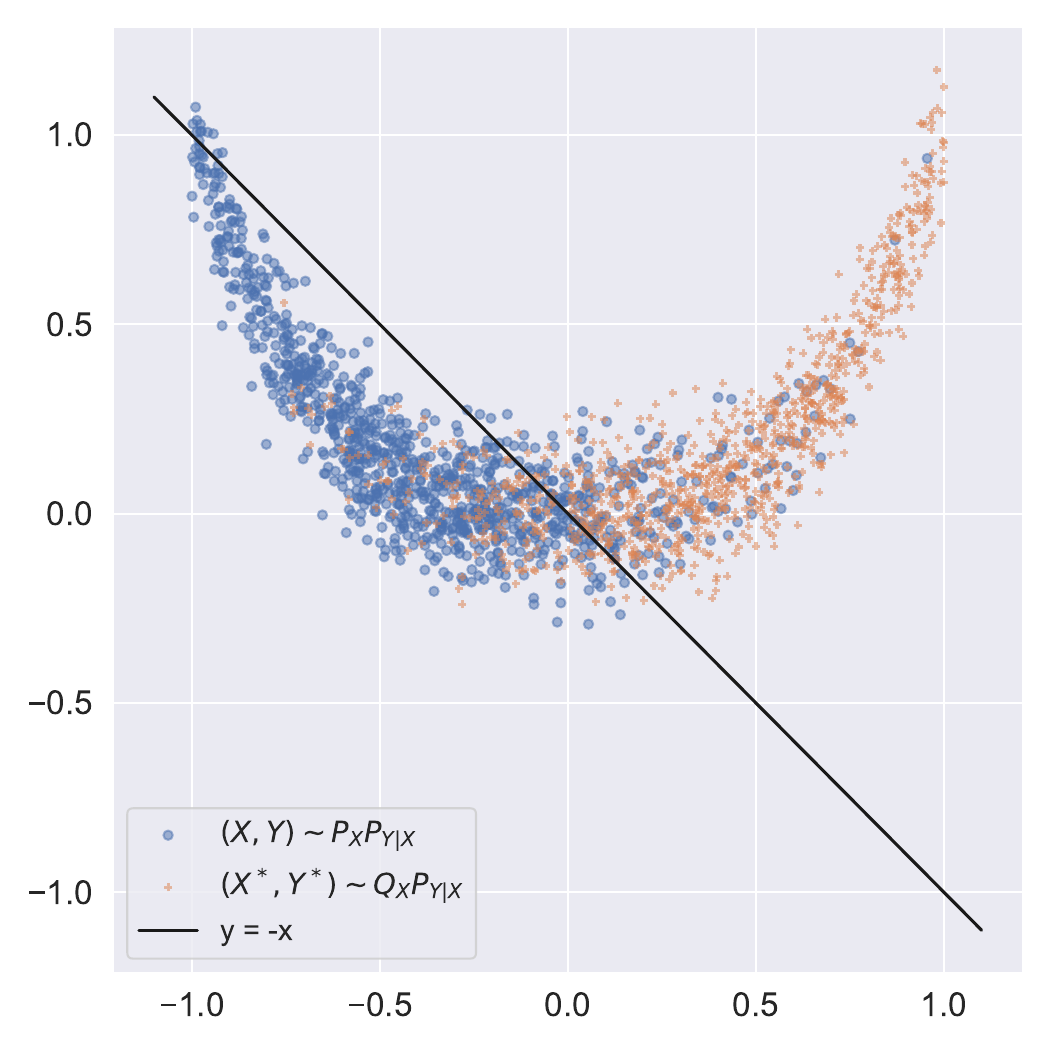}
        \caption{Setup TN0.5-Cubic.
        }
        \label{fig:trnorm0.5_data_visualization_xy}
    \end{subfigure}
    \begin{subfigure}[t]{0.43\linewidth}
        \centering
        \includegraphics[width=\linewidth]{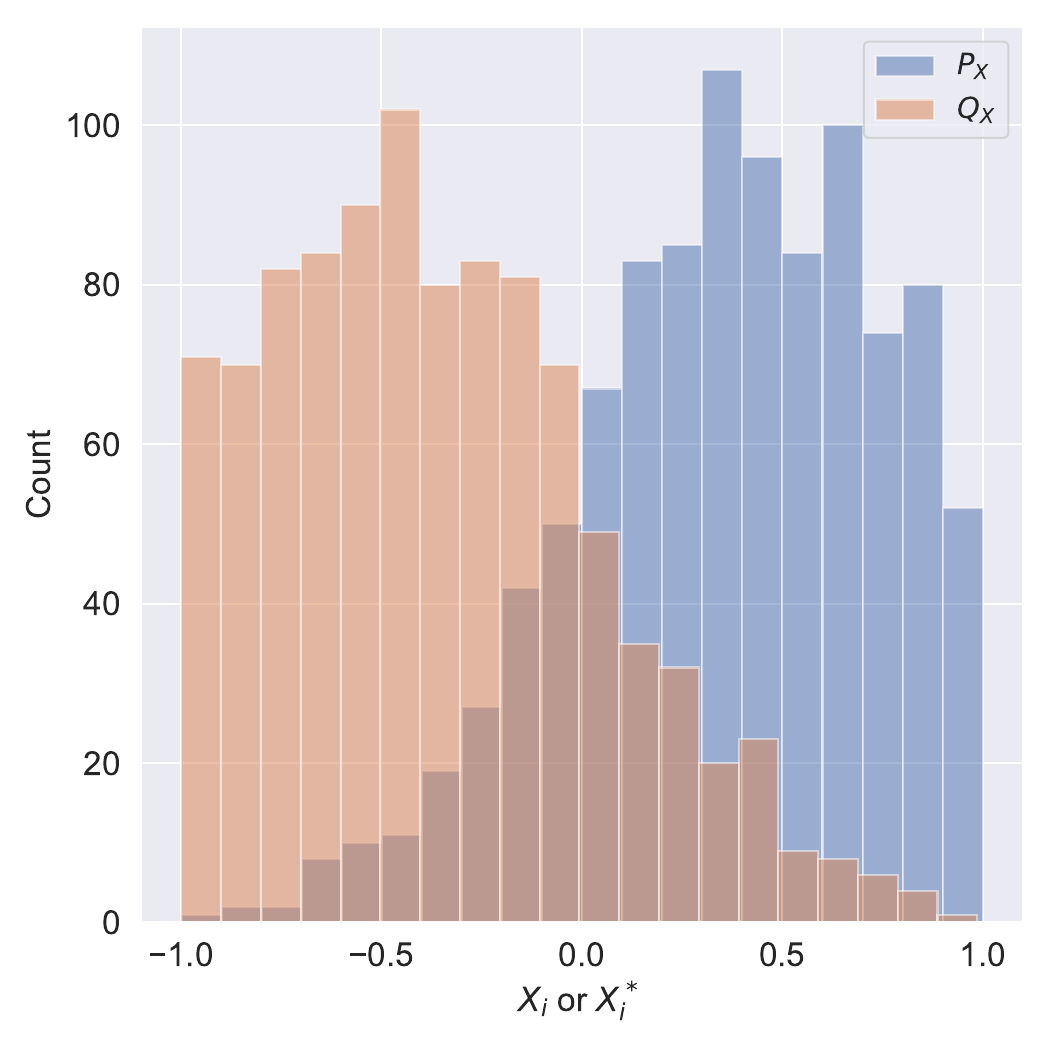}
        \caption{
        Setup TN0.5-Cubic-Reversed.
        }
        \label{fig:TN0.5_Cubed_Reversed_visualization_x}
    \end{subfigure}
    \hfill
    \begin{subfigure}[t]{0.43\linewidth}
        \centering
        \includegraphics[width=\linewidth]{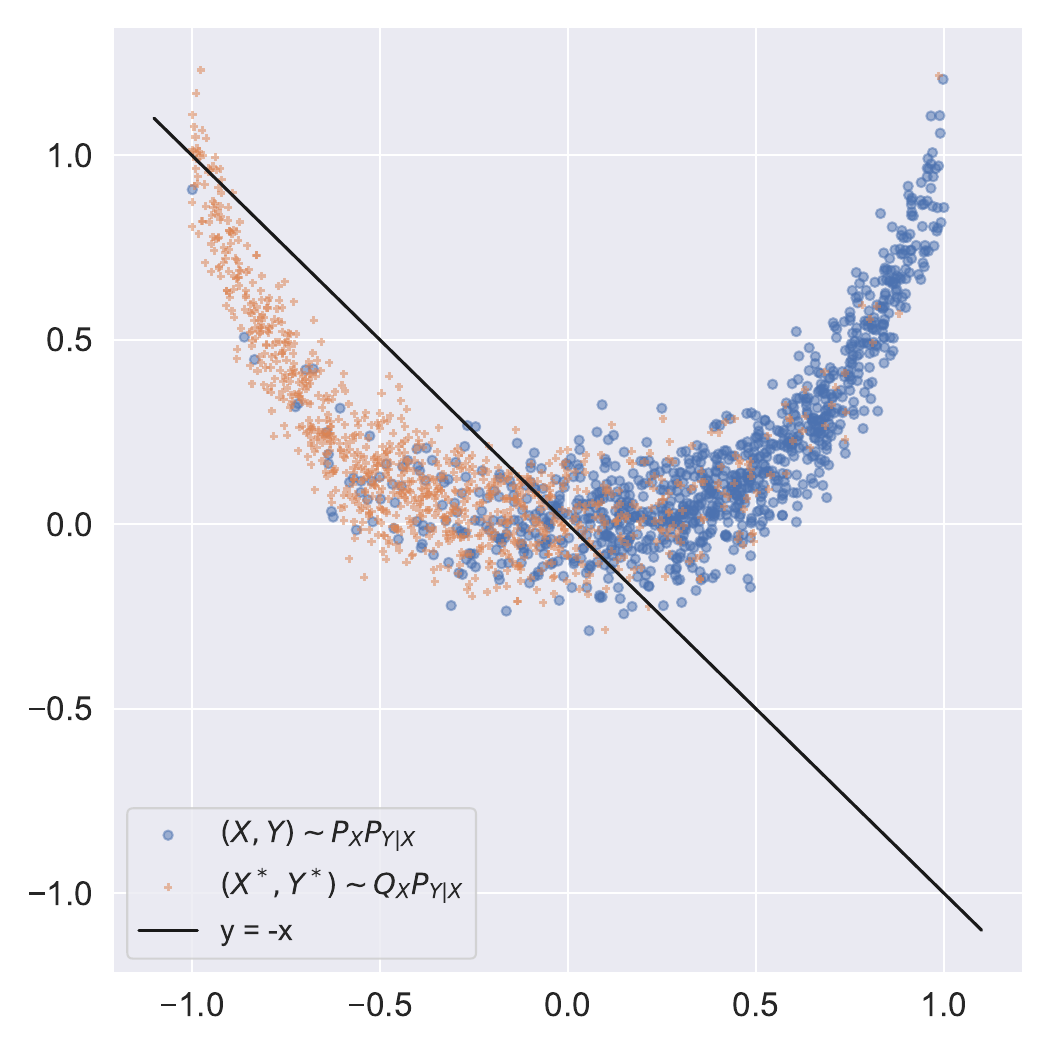}
        \caption{Setup TN0.5-Cubic-Reversed.}
        \label{fig:TN0.5_Cubed_Reversed_visualization_xy}
    \end{subfigure}
    \caption{Visualization of the data distributions used in the experiments.}
    \label{fig:trnorm_data_visualization}
\end{figure}

\begin{figure}
        \centering
        \begin{subfigure}{\linewidth}
            \centering
            \includegraphics[width=\linewidth]{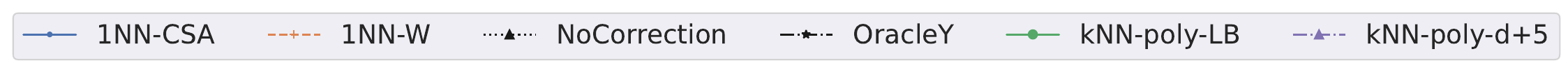}
        \end{subfigure}
        \begin{subfigure}{0.32\linewidth}
            \centering
            \includegraphics[width=\linewidth]{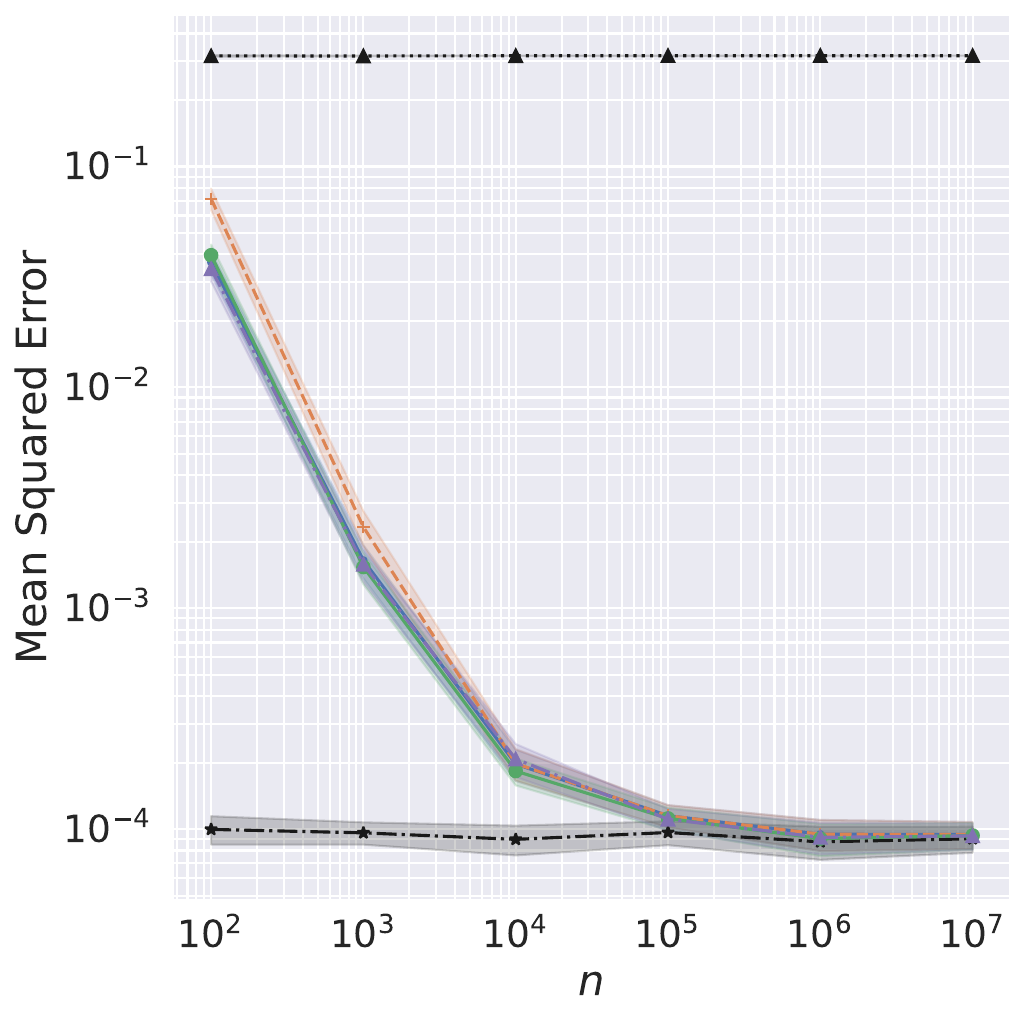}
            \caption{$d=1$}
            \label{fig:simu1_abs_cubed_trnorm_vs_trnorm_scale0.5_dim1}
        \end{subfigure}
        \hfill
        \begin{subfigure}{0.32\linewidth}
            \centering
            \includegraphics[width=\linewidth]{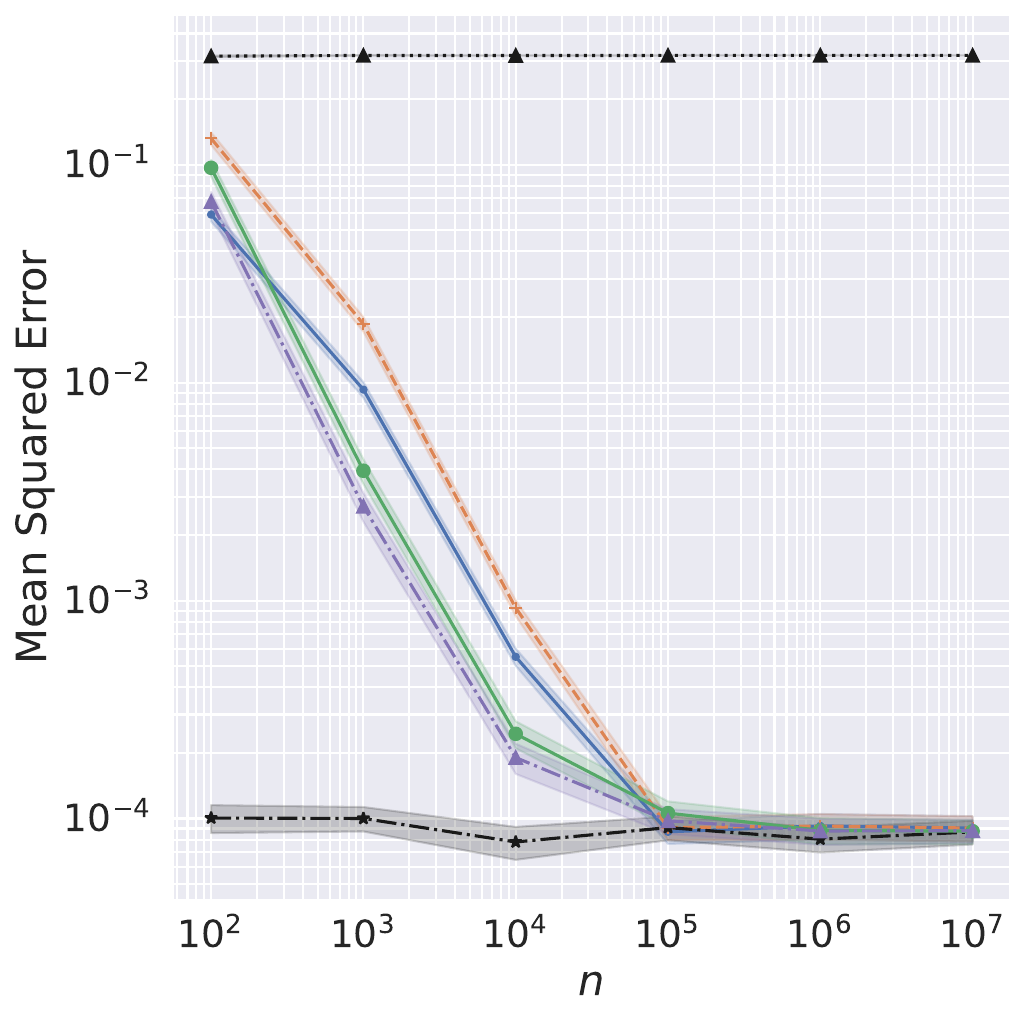}
            \caption{$d=2$}
            \label{fig:simu1_abs_cubed_trnorm_vs_trnorm_scale0.5_dim2}
        \end{subfigure}
        \hfill
        \begin{subfigure}{0.32\linewidth}
            \centering
            \includegraphics[width=\linewidth]{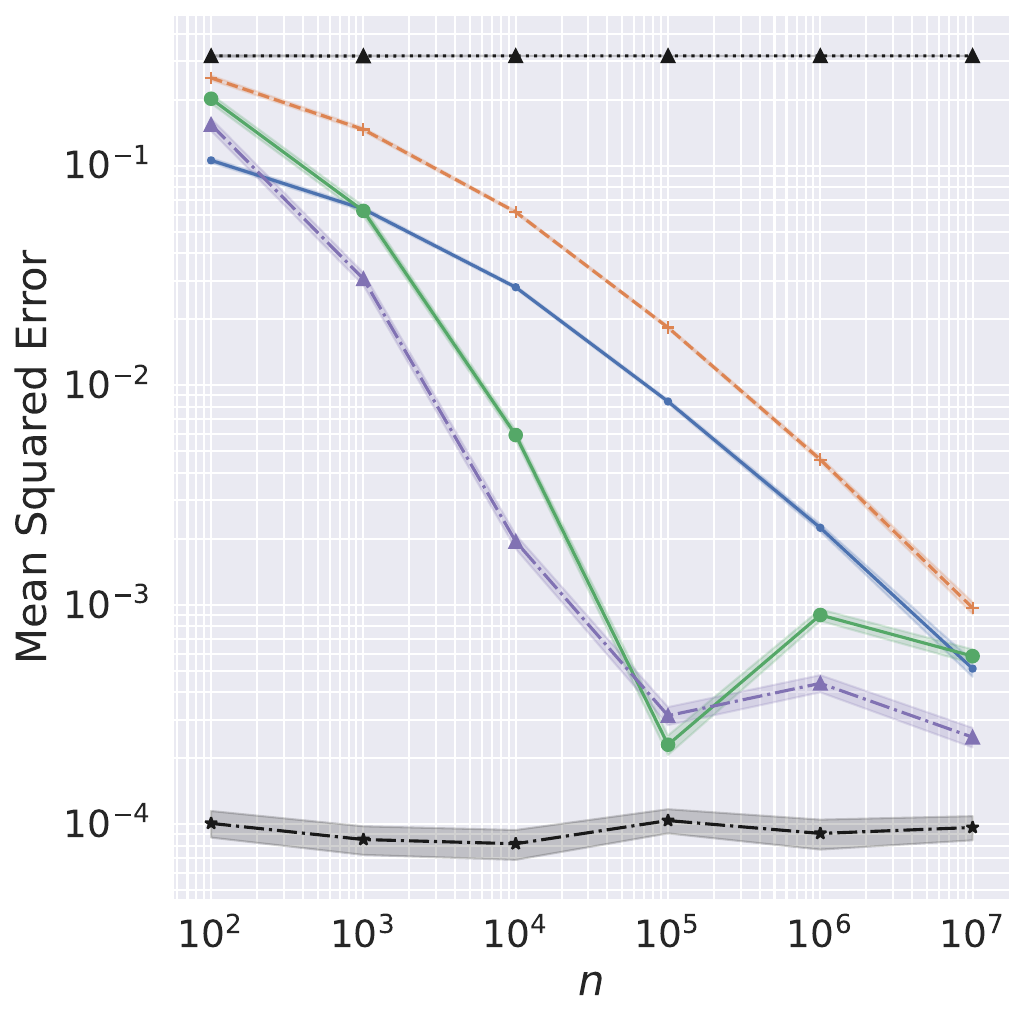}
            \caption{$d=5$}
            \label{fig:simu1_abs_cubed_trnorm_vs_trnorm_scale0.5_dim5}
        \end{subfigure}
        \caption{Results for Setup TN0.5-Cubic.}
        \label{fig:tn0.5cubic}
\end{figure}

\begin{figure}
        \centering
        \begin{subfigure}{\linewidth}
            \centering
            \includegraphics[width=\linewidth]{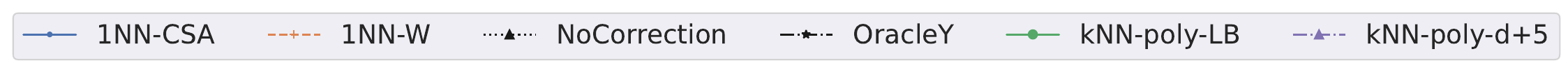}
        \end{subfigure}
        \begin{subfigure}{0.32\linewidth}
            \centering
            \includegraphics[width=\linewidth]{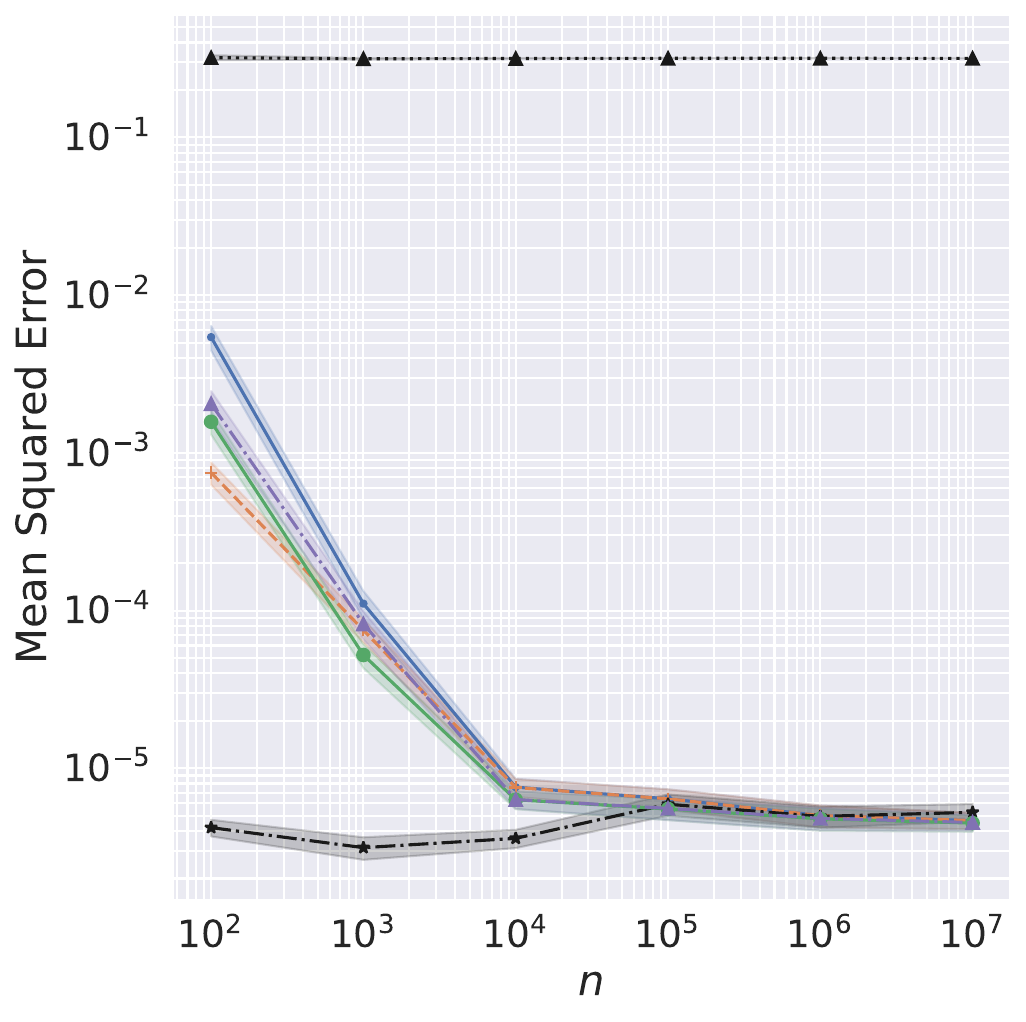}
            \caption{$d=1$}
            \label{fig:simu1_TN0.5_Cubic_Reversed_dim1}
        \end{subfigure}
        \hfill
        \begin{subfigure}{0.32\linewidth}
            \centering
            \includegraphics[width=\linewidth]{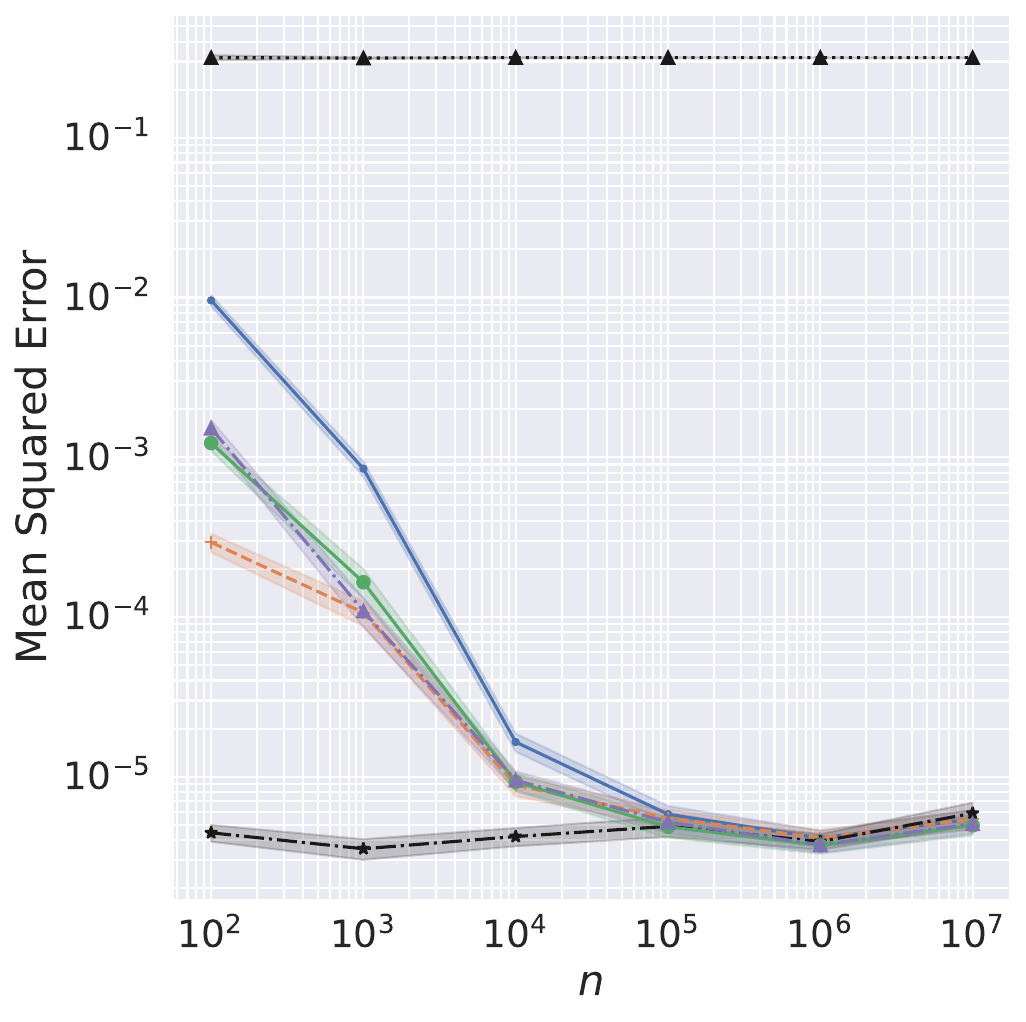}
            \caption{$d=2$}
            \label{fig:simu1_TN0.5_Cubic_Reversed_dim2}
        \end{subfigure}
        \hfill
        \begin{subfigure}{0.32\linewidth}
            \centering
            \includegraphics[width=\linewidth]{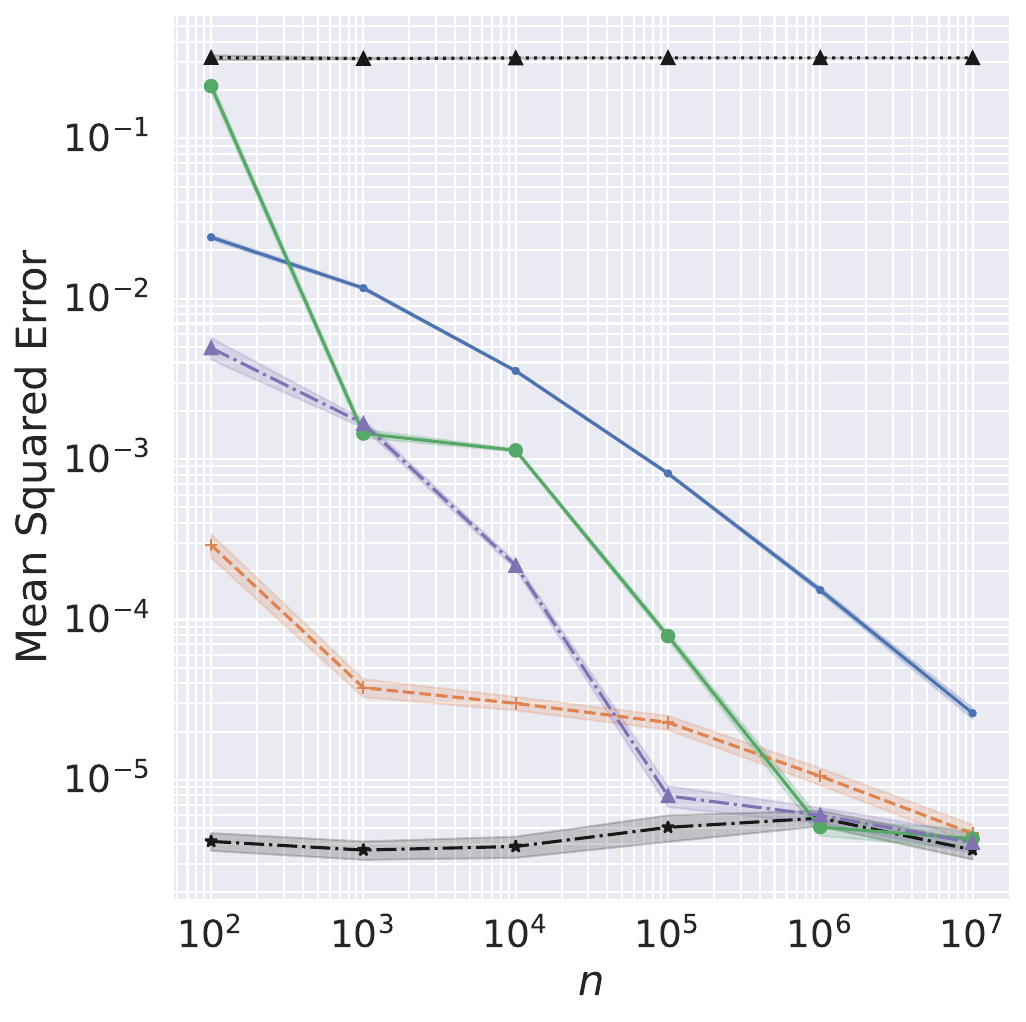}
            \caption{$d=5$}
            \label{fig:simu1_TN0.5_Cubic_Reversed_dim5}
        \end{subfigure}
        \caption{Results for Setup TN0.5-Cubic Reversed.}
        \label{fig:TN0.5_Cubic_Reversed}
\end{figure}

\paragraph{Results for the experiments}
The results for Setup TN0.5-Cubic are shown in Fig.~\ref{fig:tn0.5cubic}.
First, we can observe that NoCorrection, the estimator without covariate shift adaptation, performs poorly, and 
OracleY, the hypothetical estimator with the oracle access to the unobserved target labels, performs the best in almost all cases.
Other methods have performances between these two methods, and they tend to 
show smaller errors with larger sample sizes but increasing the covariate dimension slows down the rates of convergence, as the theory suggests.

For the kNN-poly methods of \citet{holzmann2024multivariate}, kNN-poly-d+5 often performs better than kNN-poly-LB.
However, $k$ being too small, there is no known theoretical guarantees for kNN-poly-d+5, unlike kNN-poly-LB~\citep[Theorems~1-2]{holzmann2024multivariate}.

When comparing 1NN-CSA, 1NN-W, and kNN-poly-d+5, we did not find a simple conclusion on which one would be the 
best recommended in practice. Note that the convergence rates for the three methods are the same under the assumption of second order differentiability.
Which method is preferable should only depend on some constants with for instance the covariate distribution which seems to have an impact.
For Setup TN0.5-Cubic, the kNN-poly methods tend to perform better than the others and 1NN-W is the worst (Figure~\ref{fig:tn0.5cubic}) while for Setup TN0.5-Cubic-Reversed, 1NN-W is often the best (Figure~\ref{fig:TN0.5_Cubic_Reversed}). Finally, let us mention that local polynomials of higher-order (with $L>1$) can have a better convergence rate but additional smoothness is required. In contrast, one cannot hope a better rate of convergence for 1NN-CSA or 1NN-W when additional smoothness is possible, as shown by Theorem \ref{bias_expansion}. 

\section{Conclusion}\label{conclusion}

We proposed a method for covariate shift using nearest-neighbour matching. We provided non-asymptotic error bounds 
and we applied our results to the study of average treatment effects. For future work, we could consider relaxing 
the boundedness assumptions on the support and on the distribution densities.

\section{Proofs}\label{proofs}

\subsection{Proof of Theorem \ref{better_bias}}\label{proof_better_bias}

\newcommand{\PhiOne}{\Phi^{(1)}_{\ell,\ell'}(x, y)}
\newcommand{\PhiTwo}{\Phi^{(2)}_{\ell,\ell'}(x, y)}
We prove the result when $i=2$, the case $i=1$ follows in the same way using uniform bounds, with respect to $x$, 
for the derivatives of $\Delta(x,\cdot)$.

Recall that $B_{2,n}=\E[\hat e_2(h)\mid X]-e(h)=\int_\X\frac 1k\sum_{\ell=1}^k(g(X_{\ik[\ell]})-g(x))\,
\mathrm dQ(x)$, we have a double integral \begin{align}
\E[B_{2,n}^2] & =\int_{\X^2}\frac 1{k^2}\sum_{\ell,\ell'=1}^k\E[(g(X_{\ik[\ell]})-g(x))(g(X_{\hat i_{\ell'}(y)})-
g(y))]\,\mathrm dQ^{\otimes 2}(x,y)\nonumber \\ & {}=\fbox{1-1}+\fbox{1-2},\label{eq:1+2}
\end{align}
where
\[
\fbox{1-1}:=k^{-2}\sum_{\ell,\ell'= 1}^k\int_{\X^2}\PhiOne\,\mathrm dQ^{\otimes 2}(x,y),\quad\fbox{1-2}:= k^{-2}
\sum_{\ell,\ell'=1}^k\int_{\X^2}\PhiTwo\,\mathrm dQ^{\otimes 2}(x,y),
\]
with \begin{align*}
\PhiOne & :=\E[(g(X_{\ik[\ell]})-g(x))(g(X_{\hat i_{\ell'}(y)})-g(y))\,\1{\tal+\hat\tau_{\ell'}(y)<\norm{y-x}}]
\text{ and } \\ \PhiTwo & :=\E[(g(X_{\ik[\ell]})-g(x))(g(X_{\hat i_{\ell'}(y)})-g(y))\,\1{\tal+\hat\tau_{\ell'}(y)
\ge\norm{y-x}}].
\end{align*}
It is sufficient to show that for appropriate constants $C_1$ and $C_2$, it holds
\begin{equation}
\fbox{1-1}\le C_1\left(\frac k{n + 1}\right)^{4/d}\text{\quad and \quad}
\fbox{1-2}\le C_2\left(\frac k{n + 1}\right)^{1+2/d}.\label{eq:bound1and2}
\end{equation}

Fix $(x,y)\in\X^2$ and $(\ell,\ell')\in\ieu[k]^2$.
If {$\tal+\hat\tau_{\ell'}(y) < \norm{y-x}$} holds, then the NN balls $B(x,\tal)$ and $B(y,\hat\tau_{\ell'}(y))$ do
not intersect and the actual positions of the observations $X_{\ik[\ell]}$ and $X_{\hat i_{\ell'}(y)}$ are 
conditionally independent given $\tal$ and $\hat\tau_{\ell'}(y)$.
Then the indicator $\1{\tal+\hat\tau_{\ell'}(y)<\norm{y-x}}$ ensures that there is negative correlation between
$\tal$ and $\hat\tau_{\ell'}(y)$.
Precisely, for $R_x,\,R_y\ge 0$ such that $R_x+R_y<\norm{y-x}$, Corollary \ref{ind_cond} gives
\begin{equation}
\E[(g(X_{\ik[\ell]})-g(x))(g(X_{\hat i_{\ell'}(y)})-g(y))\mid\tal=R_x,\hat\tau_{\ell'}(y)=R_y]=I(x,R_x)I(y,R_y),
\label{eq:gxgy_factorize}
\end{equation}
where $$I(a,R_a):=\frac 1{\int_{S^{d-1}}p(a+R_a\theta)\,\mathrm d\s(\theta)}\left(\int_{S^{d-1}}(g(a+R_a\theta)-
g(a))\,p(a+R_a\theta)\,\mathrm d\s(\theta)\right).$$
Applying Taylor's expansion on the function $g$, we get $\va{g(a+r\theta)-g(a)-r\nabla g(a)^\top\theta}\leq\frac{
\norm{\nabla^2g}_\infty}2r^2$. If $r\leq d(a,\X^c)$, then the density $p$ is $K$-Lipschitz continuous on $B(a,r)$ 
and we can write \begin{equation}
\va{\int_{S^{d-1}}r\theta\,p(a+r\theta)\,\mathrm d\s(\theta)}=r\va{\int_{S^{d-1}}(p(a+r\theta)-p(a))
\theta\,\mathrm d\s(\theta)}\leq K\s(S^{d-1})\,r^2.  \label{eq:gain_one_exponent}
\end{equation}

It shows that $\va{I(a,R_a)}\leq T(a,R_a):=(CR_a^2+\norm{\nabla g}_\infty R_a\,\1{R_a>d(a,\X^c)})$, where $C:=\norm{
\nabla g}_\infty K(c\pinf)^{-1}+\frac 12\norm{\nabla^2g}_\infty$.
It means that it remains to bound the quantity
\[
\E[T(x,\tal)\,T(y,\hat\tau_{\ell'}(y))\,\1{\tal+\hat\tau_{\ell'}(y)<\norm{y-x}}].
\]
Since $T(a,\cdot)$ is a non-decreasing function we can apply Corollary \ref{negative_correlation_NN}
on the negative correlation to obtain $$\E[T(x,\tal)\,T(y,\hat\tau_{\ell'}(y))\,\1{\tal+\hat\tau_{\ell'}(y)<
\norm{y-x}}]\leq\E[T(x,\tal)]\,\E[T(y,\hat\tau_{\ell'}(y))].$$
It means that \begin{align*}
\fbox{1-1} & =\int_{\X^2}\E[(g(X_{\ik[\ell]})-g(x))(g(X_{\hat i_{\ell'}(y)})-g(y))\;\1{\tal+\hat\tau_{\ell'}(y)<
\norm{y-x}}]\,\mathrm dQ^{\otimes 2}(x,y) \\ & \leq\left(\int_\X\E[C\tal^2+\norm{\nabla g}_\infty\tal\;
\1{\tal>\delta(x,\X^c)}]\,\mathrm dQ(x)\right)^2. 
\end{align*}
Using Lemmas \ref{moments_tau} and \ref{censored_tau}, and Assumption \ref{cond:regA}, we establish the bound
$$\fbox{1-1}\leq(C'+C'')^2\left(\frac k{n+1}\right)^{4/d},$$
where we have set $C':=\left(2\norm{\nabla g}_\infty K(c\pinf)^{-1}+\norm{\nabla^2g}_\infty\right)\Gamma(2+\floor{
2/d})(c\pinf\,\vab)^{-2/d}$ and $C'':=\frac{2^{1+3/d}\norm{\nabla g}_\infty}{(c\pinf\,\vab)^{2/d}}\;\underset{L>0}
\sup\left\{L^{1/d}\int_\X\exp(-L\,\delta(x,\X^c)^d)\;\mathrm dQ(x)\right\}$.

\bigskip

We are now reduced to analyse the case where the $\ell$-NN ball and the $\ell'$-NN ball intersect. More 
precisely, we set $$\PhiTwo:=\E[(g(X_{\ik[\ell]})-g(x))(g(X_{\hat i_{\ell'}(y)})-g(y))\;\1{\tal+\hat\tau_{
\ell'}(y)\geq\norm{y-x}}].$$ Using the mean value theorem, the inequality $ab\,\1{a+b\geq\delta}\leq a^2\1{a\geq
\delta/2}+b^2\1{b\geq\delta/2}$, and applying Lemma \ref{censored_tau} now yields \begin{align*}
\fbox{1-2} & =\int_{\X^2}\PhiTwo\,\mathrm dQ^{\otimes 2}(x,y) \\ & \leq\norm{\nabla g}_{\infty}^2\int_{\X^2}
\E\left[\tal\hat\tau_{\ell'}(y)\1{\tal+\hat\tau_{\ell'}(y)\geq\norm{y-x}}\right]\,\mathrm dQ^{\otimes 2}(x,y) \\ 
&\leq\norm{\nabla g}_{\infty}^2\int_{\X^2}\E\left[\tal^2\1{\tal\geq\norm{y-x}/2}+\hat\tau_{\ell'}(y)^2\1{\hat\tau_{
\ell'}(y)\geq\norm{y-x}/2}\right]\,\mathrm dQ^{\otimes 2}(x,y) \\ & \leq 2C\left(\frac k{n+1}\right)^{2/d}
\int_{\X^2}\exp\left(-L\,\frac{n+1}k\,\norm{y-x}^d\right)\,\mathrm dQ^{\otimes 2}(y,x) \\ & \leq 2C\qsup\,\left(
\frac k{n+1}\right)^{2/d}\s(S^{d-1})\int_{\R_+}\exp\left(-L\,\frac{n+1}k\,r^d\right)\,r^{d-1}\mathrm dr \\ & \leq
2C\qsup\,\left(\frac k{n+1}\right)^{2/d}\,\vab\int_{\R_+}\exp\left(-L\,\frac{n+1}k\,s\right)\,\mathrm ds \\ & 
\leq\frac{2C\qsup\,\vab}L\,\left(\frac k{n+1}\right)^{1+2/d},   
\end{align*}
where $C:=2e^{1/4}\norm{\nabla g}_\infty^2\Gamma(2+\floor{2/d})(c\pinf\,\vab)^{-2/d}$ and $L:=\frac{c\pinf\,
\vab}{2^{d+3}}$. In the series of inequalities above, we have used the equality $\sigma\left(S^{d-1}\right)=
d\,\vab$, and we also used the fact that for all $a\ge 0,\,\Pb(\tal\geq a)=\Pb(\tal>a)$ since the distribution 
$P$ is absolutely continuous with respect to the Lebesgue measure on $\R^d$.

\subsection{Proof of Theorem \ref{bias_expansion}}\label{proof_bias_expansion}

As for Theorem \ref{better_bias}, we only consider the case $i=2$. The case $i=1$ is similar, using uniform controls 
for the derivatives of $y\mapsto g_x(y):=\Delta(x,y)$ with respect to $x$ and $y$. 

\textbf{Proof of the expansion of $\E[B_{2,n}]$:}
The first-order bias term can be written as
$$\E[B_{2,n}]=\frac 1k\sum_{\ell=1}^k\int_\X\E\left[g(X_{\ik[\ell]})-g(x)\right]\,\mathrm dQ(x).$$
Fix $1\le\ell\le k$ and $x\in\X$ such that $x$ is in the support of $Q$. We consider the Taylor expansion of the
function $g\in C^{2+\beta}(\X)$. We have $$g(X_{\ik[\ell]})-g(x)=\nabla g(x)^\top(X_{\ik[\ell]}-x)+\frac 12
(X_{\ik[\ell]}-x)\nabla^2g(x)(X_{\ik[\ell]}-x)+O(\tal^{2+\beta}).$$
For the rest of the proof, we set $\varphi_x(z):=\nabla g(x)^\top z+\frac 12z^\top\nabla^2g(x)z$. From
Lemma~\ref{moments_tau}, we get 
\begin{equation}
\E[g(X_{\ik[\ell]})-g(x)]=\E[\varphi_x(X_{\ik[\ell]}-x)]+o((k/n)^{2/d}). \label{eq:FirstBiasAfterLemFour}
\end{equation}
Because we assumed that the support of $Q$ lies inside the interior of $\X$, there exists $\varepsilon>0$ such 
that for $Q$-almost all $x\in\X,\,B(x,2\varepsilon)\subset\X$. Then by Lemma \ref{censored_tau}, there exist 
positive constants $C,L>0$ such that $$\va{\E[\varphi_x(X_{\ik[\ell]}-x)\1{\tal>\varepsilon}]}\le C\exp\left(-L
\frac nk\right).$$ The previous bound holds independently from $x$ thanks to the continuity of $\nabla g$ and
$\nabla^2g$ on the compact set $\X$, and because $k=o(n^{1/2})$, this term is negligible compared to
$(k/n)^{2/d}$. We know from Lemma \ref{single_distribution} that the density of the random vector $X_{\ik[\ell]}-x$
is given by $$f_\ell(z):=np(x+z)\binom{n-1}{\ell-1}P(B(x,\norm z))^{\ell-1}P(\X\backslash B(x,\norm z))^{n-\ell}.$$
Then setting $F_x(r):=P(B(x,r))$, we have \begin{align*}
\fbox{2-1} & :=\E[\nabla g(x)^\top(X_{\ik[\ell]}-x)\1{\tal\leq\varepsilon}] \\ & =n\binom{n-1}{\ell-1}
\int_0^\varepsilon F_x(r)^{\ell-1}(1-F_x(r))^{n-\ell}\int_{S^{d-1}}\nabla g(x)^\top(r\theta)p(x+r\theta)\,
\mathrm d\s(\theta)\,r^{d-1}\,\mathrm dr \\ & =n\binom{n-1}{\ell-1}\int_0^\varepsilon F_x(r)^{\ell-1}(1-F_x(r))^{
n-\ell}r^d\int_{S^{d-1}}\nabla g(x)^\top\theta(p(x+r\theta)-p(x))\,\mathrm d\s(\theta)\,\mathrm dr.
\end{align*}
But since $B(x,\varepsilon)\subset\X$ and $p\in C^{1+\eta}(\X)$, we have $p(x+r\theta)=p(x)+r\nabla p(x)^\top
\theta+O(r^{1+\eta})$, so that \begin{align*}
\fbox{2-1} & =n\binom{n-1}{\ell-1}\int_0^\varepsilon F_x(r)^{\ell-1}(1-F_x(r))^{n-\ell}r^d\int_{S^{d-1}}(r\nabla 
g(x)^\top\theta\nabla p(x)^\top\theta+O(r^{1+\eta}))\,\mathrm d\s(\theta)\,\mathrm dr \\ & =n\binom{n-1}{\ell-1}
\int_0^\varepsilon F_x(r)^{\ell-1}(1-F_x(r))^{n-\ell}r^{d+1}\left(\int_{S^{d-1}}\theta^\top\nabla g(x)\nabla 
p(x)^\top\theta\,\mathrm d\s(\theta)+O(r^\eta)\right)\,\mathrm dr.
\end{align*}
It means that setting $C_{\ell,n}:=n\binom{n-1}{\ell-1}$ and $\Psi_2(x):=\frac 1{\s(S^{d-1})}\int_{S^{d-1}}
\theta^\top\left(\frac{\nabla g(x)^\top\nabla p(x)}{p(x)}+\frac{\nabla^2g(x)}2\right)\theta\,\mathrm d\s(\theta)$,
we have \begin{align*}
\E[\varphi_x(X_{\ik[\ell]}-x)\1{\tal\leq\varepsilon}]=C_{\ell,n}\,p(x)\s(S^{d-1})\int_0^\varepsilon(\Psi_2(x)+
O(r^\eta))F_x(r)^{\ell-1}(1-F_x(r))^{n-\ell}\,r^{d+1}\,\mathrm dr.
\end{align*}
Now, because $p$ is Lipschitz-continuous on $\X$, one can show that $F_x(r)=p(x)\,\vab\,r^d(1+O(r))$, and setting
$C_p:=\norm{\nabla p}_\infty/\pinf$, we obtain by the binomial expansion and $\binom{\ell-1}j\le\ell^j$,
\begin{equation}
\va{F_x(r)^{\ell-1}-(p(x)\,\vab\,r^d)^{\ell-1}}\leq(p(x)\,\vab\,r^d)^{\ell-1}\sum_{j=1}^{\ell-1}(C_p\ell r)^j.
\label{eq:Diff_F_and_rd}
\end{equation}
Similarly, we also have $F_x(r)\ge p(x)\,\vab\,r^d(1-C_pr)\ge p(x)\,\vab\,r^d(1-C_p\varepsilon)$.
Applying Lemma \ref{censored_tau} and using $k^{d+1}=o(n)$, we know that \begin{align*}
& C_{\ell,n}\,p(x)\s(S^{d-1})\int_{\varepsilon/k}^\varepsilon r^2F_x(r)^{\ell-1}(1-F_x(r))^{n-\ell}\,r^{d-1}\,
\mathrm dr=\E[\tal^2\1{\varepsilon/k\leq\tal\leq\varepsilon}]\\ &\le C\left(\frac kn\right)^{2/d}\exp\left(-C'\,
\frac nk\,(\varepsilon/k)^d\right)=o((k/n)^{2/d}),
\end{align*}
which means that the part of the integral outside the range $[0,\varepsilon/k]$ is negligible.
Using the inequality $1-t\leq\exp(-t)$ for $t\geq 0$, we get \begin{align*}
\fbox{2-2} & :=C_{\ell,n}\int_0^{\varepsilon/k}\va{F_x(r)^{\ell-1}-(p(x)\,\vab\,r^d)^{\ell-1}}(1-F_x(r))^{n-\ell}\,
r^{d+1}\,\mathrm dr \\ & \le C_{\ell,n}\sum_{j=1}^{\ell-1}(C_p\ell)^j\int_0^{\varepsilon/k}(p(x)\,\vab\,r^d)^{
\ell-1}r^j\exp(-(n-\ell)F_x(r))\,r^{d+1}\,\mathrm dr \\ & \le C_{\ell,n}\sum_{j=1}^{\ell-1}(C_p\ell)^j\int_{\R_+}
(p(x)\,\vab\,r^d)^{\ell-1}r^{2+j}\exp(-L_{\ell,k,n}np(x)\,\vab\,r^d)\,r^{d-1}dr,
\end{align*}
where $L_{\ell,k,n}:=\frac{n-\ell}n(1-C_p\varepsilon/k)>0$. We can redefine the constant $\varepsilon$ small enough
to ensure that $1/4\ge C_p\varepsilon$. Furthermore, $\ell=o(n)$, so that $L_{\ell,k,n}\ge L_k:=1-(2k)^{-1}$ for all
sufficiently large $n$, and thus \begin{align*}
\fbox{2-2}\le C_{\ell,n}\sum_{j=1}^{\ell-1}(C_p\ell)^j\int_{\R_+}(p(x)\,\vab\,r^d)^{\ell-1}r^{2+j}\exp(-nL_kp(x)\,
\vab\,r^d)\,r^{d-1}dr,
\end{align*}
to which we can apply Lemma~\ref{lem:negligible_integral_simple_simplified} to obtain \begin{align*}
\fbox{2-2} & =C_{\ell,n}\sum_{j=1}^{\ell-1}(C_p\ell)^j\,o((k/n)^{(2+j)/d})=o\left(\left(\frac kn\right)^{2/d}\;
\sum_{j=1}^\infty\left(\frac{\ell^{d+1}}n\right)^{j/d}\right) \\ & =o\left(\left(\frac kn\right)^{2/d}\left(
\frac{\ell^{d+1}}n\right)^{1/d}\left(1-\lr(){\frac{\ell^{d+1}}n}^{1/d}\right)^{-1}\right)=o((k/n)^{2/d}).
\end{align*}
Similarly, through Lemma~\ref{lem:negligible_integral_simple_simplified}, we have
$$C_{\ell,n}\int_0^{\varepsilon/k}r^\eta F_x(r)^{\ell-1}(1-F_x(r))^{n-\ell}\,r^{d+1}\,\mathrm dr= O((k/n)^{\frac{2+
\eta}d})=o((k/n)^{2/d}).$$
This far, we have shown that $\E[g(X_{\ik[\ell]})-g(x)]$ equals \begin{equation*}
C_{\ell,n}\,p(x)\s(S^{d-1})\Psi_2(x)\int_0^{\varepsilon/k}(p(x)\,\vab\,r^d)^{\ell-1}(1-F_x(r))^{n-\ell}\,r^{d+1}\,
\mathrm dr+o((k/n)^{2/d}).    
\end{equation*}
Next, we claim \begin{align}
& C_{\ell,n}\int_0^{\varepsilon/k}(p(x)\,\vab\,r^d)^{\ell-1}\,(1-F_x(r))^{n-\ell}\,r^{d+1}\,\mathrm dr\nonumber \\
& =C_{\ell,n}\int_0^{\varepsilon/k}(p(x)\,\vab\,r^d)^{\ell-1}\,(1-F_x(r))^n\,r^{d+1}\,\mathrm dr+o((k/n)^{2/d}).
\label{eq:lead1}
\end{align}
To see this, set $r_1:=r,\,r_2:=0$, and $\ell':=0$ in Lemma~\ref{lem:CanChangeExponentOfOneMinusFExtendedCorrected}
to obtain \begin{equation*}
\va{(1-F_x(r))^{n-\ell}-(1-F_x(r))^n}=O\mtlr(){\ell\exp(-nL_kp(x)\,\vab\,r^d)\,r^d},
\end{equation*}
for $r\in[0,\epsilon/k]$, and apply Lemma~\ref{lem:negligible_integral_simple_simplified}:
\[
C_{\ell,n}\int_0^{\varepsilon/k}(p(x)\,\vab\,r^d)^{\ell-1}\ell{\exp(-nL_kp(x)\,\vab\,r^d)r^{2d+1}}\,\mathrm dr
=O(k^{1+(2+d)/d}n^{-(2+d)/d}).
\]
The order of this quantity is $o((k/n)^{2/d})$ by the assumption $k^{\gamma} = o(n)$ with $\gamma > d + 1$.
Moreover, setting $r_2:=0$ in Lemma~\ref{lem:PolyOneMinusFNearExpLnExtended} and then applying 
Lemma~\ref{lem:negligible_integral_simple_simplified} yields \begin{align*}
& \bigg\vert C_{\ell,n}\int_0^{\varepsilon/k}(p(x)\,\vab\,r^d)^{\ell-1}\,(1-F_x(r))^n\,r^{d+1}\,\mathrm dr \\
& \phantom{{\bigg\vert{}}}-C_{\ell,n}\int_0^{\varepsilon/k}(p(x)\,\vab\,r^d)^{\ell-1}\,\exp(-np(x)\,\vab\,r^d)
r^{d+1}\,\mathrm dr\bigg\vert \\ & \le 2np(x)\vert V^d\vert C_{\ell,n}\int_0^{\varepsilon/k} (p(x)\,\vab\,r^d)^{
\ell-1}\,O\mtlr(){r^{d+1}\,\exp(-np(x)\,\vab\,r^d\,L_k)r^{d+1}}\,\mathrm dr \\ & =O(n(k/n)^{(2+d+1)/d})=
o((k/n)^{2/d}),
\end{align*}
where we used $k^{d+1}=o(n)$.
The leading term of the bias is now reduced to the integral \begin{equation}
\fbox{2-3}:=C_{\ell,n}\,p(x)\s(S^{d-1})\Psi_2(x)\int_0^{\varepsilon/k}(p(x)\,\vab\,r^d)^{\ell-1}\exp(-np(x)\,\vab\,
r^d)\,r^{d+1}\,\mathrm dr.\label{eq:lead5}
\end{equation}
To extend the range of the integral to $\R_+$, we show that the following part is negligible:
$$\fbox{2-4}:=C_{\ell,n}p(x)\s(S^{d-1})\Psi_2(x)\int_{\varepsilon/k}^\infty(p(x)\,\vab\,r^d)^{\ell-1}\exp(-np(x)
\,\vab\,r^d)r^{d+1}\,\mathrm dr.$$
We make the change of variables $s:=np(x)\,\vab\,r^d$, which yields \begin{align*}
\fbox{2-4} & =C_{\ell,n}n^{-\ell}\Psi_2(x)(p(x)\,\vab\,n)^{-2/d}\int_{np(x)\vab(\varepsilon/k)^d}^\infty 
s^{\ell+2/d-1}e^{-s}\,\mathrm ds \\ & \le\frac C{\Gamma(\ell)}\Psi_2(x)(p(x)\,\vab\,n)^{-2/d}\exp\left(
-\frac n{2k^d}p(x)\,\vab\,\varepsilon^d\right)\int_{\R_+}s^{\ell+2/d-1}e^{-s/2}\,\mathrm ds \\ & \le C\Psi_2(x)
\left(\frac 2{p(x)\,\vab}\right)^{2/d}n^{-2/d}\,\frac{\Gamma(\ell+2/d)}{\Gamma(\ell)}\;2^\ell\,\exp\left(-\frac n{
2k^d}p(x)\,\vab\,\varepsilon^d\right).
\end{align*}
Above we use the identities $\s(S^{d-1})=d\,\vab$ and $C_{\ell,n}n^{-\ell}=n^{1-\ell}\binom{n-1}{\ell-1}=\frac 1{
\Gamma(\ell)}(1+o(1))$.
However, because $$2^\ell\exp\left(-\frac n{2k^d}p(x)\,\vab\,\varepsilon^d\right)=\exp\left(\ell\ln 2-\frac n{2k^d}
p(x)\,\vab\,\varepsilon^d\right)=o(1),$$ using $k^{d+1}=o(n)$, we find that $\fbox{2-4}=o((k/n)^{2/d})$.

\bigskip

This means that we can extend the integral in Eq.~\eqref{eq:lead5} over $\R_+$.
Now again the change of variables $s:=np(x)\,\vab\,r^d$ yields \begin{align*}
\fbox{2-3} & =C_{\ell,n}p(x)\sigma(S^{d-1})\Psi_2(x)\times\int_{\R_+}(p(x)\,\vab\,r^d)^{\ell-1}r^2\exp(-np(x)\,\vab
\,r^d)\,r^{d-1}\mathrm dr \\ & =C_{\ell,n}p(x)\sigma(S^{d-1})\Psi_2(x)\times d^{-1}n^{-\ell-2/d}\,(p(x)\,\vab)^{
-2/d-1}\,\Gamma(\ell+2/d).
\end{align*}
Finally, we showed that \begin{equation*}
\E\left[g(X_{\ik[\ell]})-g(x)\right]=n^{-2/d}\frac{\Gamma(\ell+2/d)}{\Gamma(\ell)}(p(x)\,\vab)^{-2/d}
\Psi_2(x)+o((k/n)^{2/d}),$$ and therefore, $$\E[B_{2,n}]=\frac{n^{-2/d}}k\sum_{\ell=1}^k\frac{\Gamma(\ell+2/d)}{
\Gamma(\ell)}\left(\int_\X(p(x)\,\vab)^{-2/d}\,\Psi_2(x)\,\mathrm dQ(x)\right)+o((k/n)^{2/d}).
\end{equation*}

\bigskip

\textbf{Proof of the expansion of $\E[B_{2,n}^2]$:}

It is sufficient to show $\E[B^2_{2,n}]-\E[B_{2,n}]^2=o((k/n)^{4/d})$, because in the first part of the proof, we 
have shown that $\E[B_{2,n}]^2$ has the same leading term. However, setting for $\varepsilon>0$ to be fixed later,
$$B_{2,n,\varepsilon}:=k^{-1}\sum_{\ell=1}^k\int_\X\left\{g\left(X_{\hat i_\ell(x)}\right)-g(x)\right\}\1{
\tal\leq\varepsilon/k}\,\mathrm dQ(x),$$
we first note that 
$$\va{B_{2,n}-B_{2,n\varepsilon}}\leq K'\int_\X\hat\tau_k(x)\1{\hat\tau_k(x)>\varepsilon/k}\,\mathrm dQ(x),$$
where $K'$ is the Lipschitz constant of $g$. Using Lemma \ref{censored_tau}, it is not difficult to show that
$$\E[B^2_{2,n}]-\E[B_{2,n}]^2=\E[B^2_{2,n,\varepsilon}]-\E[B_{2,n,\varepsilon}]^2+o\left(\left(\frac kn\right)^{
4/d}\right).$$
For simplicity of notations, we will denote $B_{2,n,\varepsilon}$ by $B_{2,n}$ in what follows. 
We can write the the leading term as \begin{equation*}
\E[B_{2,n}]^2=\frac 1{k^2}\sum_{(\ell,\ell')\in\ieu[k]^2}\int_{\X^2}B_\ell(x)B_{\ell'}(y)\,\mathrm dQ^{\otimes 2}
(x,y),
\end{equation*}
where
\[
B_\ell(a):=\E\lr[]{\lr(){g(X_{\ik[\ell]})-g(a)}\,\1{\hat\tau_\ell(a)\le\varepsilon/k}}.
\]

On the other hand, from Eqs.~\eqref{eq:1+2} and \eqref{eq:bound1and2} of the proof of Theorem~\ref{better_bias} in 
Appendix~\ref{proof_better_bias}, because we work in dimension $d\geq 3$, the leading term in the second-order bias 
is
\[
\E[B_{2,n}^2]=\frac 1{k^2}\sum_{\ell,\ell'=1}^k\int_{\X^2}B_{\ell,\ell'}(x,y)\,\mathrm dQ^{\otimes 2}(x,y)+o((k/n)^{
4/d}),
\]
where
\[
B_{\ell,\ell'}(x,y):=\E[(g(X_{\ik[\ell]})-g(x))(g(X_{\hat i_{\ell'}(y)})-g(y))\;\1{\tal+\hat\tau_{\ell'}(y)<
\norm{y-x}}].
\]
Using a similar argument as before, we can replace $B_{\ell,\ell'}(x,y)$ by the same expectation but restricted to
the event $\left\{\tal\leq\varepsilon/k\right\}\cap\left\{\hat\tau_{\ell'}(y)\leq\varepsilon/k\right\}$ and for 
simplicity of notation, we will still denote by $B_{\ell,\ell'}(x,y)$ this restricted version.
All we need to do is to compare $B_\ell(x)B_{\ell'}(y)$ and $B_{\ell,\ell'}(x,y)$.

Because the support of $Q$ is included by the interior of $\X$ and $\X$ is compact, we can find $\varepsilon>0$ such 
that for all $x$ on the support of $Q$, $B(x,\varepsilon)\subset\X$.
Then, for any point $a$ on the support of $Q$ and any $R_a\leq\varepsilon$, we can express the following expectation 
using the polar coordinates:
\begin{equation}
\E[(g(X_{\hat i_\ell(a)})-g(a))\mid\tal=R_a]=I(a,R_a),\label{eq:EgaIa}
\end{equation}
where $$I(a,R_a):=\frac 1{\int_{S^{d-1}}p(a+R_a\theta)\,\mathrm d\s(\theta)}\left(\int_{S^{d-1}}
(g(a+R_a\theta)-g(a))\,p(a+R_a\theta)\,\mathrm d\s(\theta)\right).$$
Moreover, from Eq.~\eqref{eq:gxgy_factorize} of Proof of Theorem~\ref{better_bias}, for any pair of points $(x,y)$ 
on the support of $Q$ and $R_x+R_y<\norm{x-y}$ and $R_x,R_y\leq\varepsilon$, we have \begin{equation}
\E[(g(X_{\ik[\ell]})-g(x))(g(X_{\hat i_{\ell'}(y)})-g(y))\mid\tal=R_x,\hat\tau_{\ell'}(y)=R_y]=I(x,R_x)I(y,R_y),
\label{eq:IxIy}
\end{equation}
The $\beta$-H\"older continuity of $\nabla^2g$ and the $\eta$-H\"older continuity of $\nabla p$ imply that
\begin{align*}
g(a+R_a\theta)-g(a) & =\nabla g(a)^\top z+z^\top\nabla^2g(a)z+O\mtlr(){R_a^{2+\beta}},\text{ and } \\
p(a+R_a\theta)-p(a) & =\nabla p(a)^\top\theta R_a+O\mtlr(){R_a^{1+\eta}}.
\end{align*}
Thus, the integral in the definition of $I(a, R_a)$ can be expressed as
\begin{align*}
\fbox{2-5} & :=\int_{S^{d-1}}(g(a+R_a\theta)-g(a))p(a+R_a\theta)\,\mathrm d\s(\theta) \\ & =\int_{S^{d-1}}\lr(){
\nabla g(a)^\top\theta R_a+\frac 12\theta^\top\nabla^2g(a)\theta R_a^2+O\mtlr(){R_a^{2+\beta}}} \\ & \phantom{={} 
\int_{S^{d-1}}\bigg(}\times(p(a)+\nabla p(a)^\top\theta R_a+O\mtlr(){R_a^{1+\eta}})\,\mathrm d\s(\theta) \\
& =\int_{S^{d-1}}\lr(){\nabla g(a)^\top\theta\,\nabla p(x)^\top\theta R_a^2+\frac 12\theta^\top\nabla^2g(a)\theta
R_a^2 p(a)}\,\mathrm d\s(\theta)+O\mtlr(){R_a^{2+\kappa}} \\ & =p(a)\Psi_2(a)\s(S^{d-1})R_a^2+O\mtlr(){R_a^{
2+\kappa}},
\end{align*}
where $\kappa:=\min(\eta,\beta)$. We also used $\int_{S^{d-1}}\theta\,\mathrm d\s(\theta)=0$. Here,
\[
\Psi_2(a):=\frac 1{\s(S^{d-1})}\int_{S^{d-1}}\theta^\top\left(\frac{\nabla g(a)^\top\nabla p(a)}{p(a)}+\frac{\nabla^2
g(a)}2\right)\theta\,\mathrm d\s(\theta),
\]
as defined in the statement of the theorem. By the same expansion of $p$ and the fact that $\int_{S^{d-1}}\theta\,
\mathrm d\s(\theta)=0$, we can express the denominator of $I(a,R_a)$ as \begin{equation}
{\int_{S^{d-1}}p(a+R_a\theta)\mathrm d\s(\theta)}=p(a)\s(S^{d-1})+O(R_a^{1+\eta}).\label{eq:PaPlusIsSigma}
\end{equation}
Here, we choose $\varepsilon>0$ small enough in such a way the previous quantity is lower bounded by a positive 
constant, which is always possible since $p$ is lower bounded on $\X$. Plugging this result into
Eqs.~\eqref{eq:EgaIa} and \eqref{eq:IxIy}, we conclude \begin{align*}
& \E[(g(X_{\hat i_\ell(a)})-g(a))\mid\hat\tau_\ell(a)=R_a]=\Psi_2(a)R_a^2+O(R_a^{2+\kappa}),\text{ and } \\
& \E[(g(X_{\ik[\ell]})-g(x))(g(X_{\hat i_{\ell'}(y)})-g(y))\mid\tal=R_x,\hat\tau_{\ell'}(y)=R_y] \\ & =\Psi_2(x)
\Psi_2(y)R_x^2R_y^2+O(R_x^{2+\kappa}R_y^{2+\kappa}).
\end{align*}
As a consequence, using Lemma \ref{tail_tau}, we obtain \begin{align*}
B_{\ell}(x) B_{\ell'}(y) & =\Psi_2(x)\Psi_2(y)J_\ell^{(1)}(x)J_{\ell'}^{(1)}(y)+o\left((k/n)^{4/d}\right),
\text{ and} \\ B_{\ell,\ell'}(x,y) &=\Psi_2(x)\Psi_2(y)J_{\ell,\ell'}^{(2)}(x,y)+o\left((k/n)^{4/d}\right),
\end{align*}
where for $a\in\X$ and $1\leq j\leq n$, $J_j^{(1)}(a):=\E[\hat\tau_j(a)^2\1{\hat\tau_j(a)\le\varepsilon/k}]$ and
$$J_{\ell,\ell'}^{(2)}(x,y):=\E[\hat\tau_{\ell'}(x)^2\hat\tau_{\ell'}(y)^2\;\1{\tal+\hat\tau_{\ell'}(y)<\norm{y-x}}
\1{\hat\tau_\ell(x)\le\varepsilon/k}\1{\hat\tau_{\ell'}(y)\le\varepsilon/k}].$$
It remains to show that the difference $D:=J_\ell^{(1)}(x)J_{\ell'}^{(1)}(y)-J_{\ell,\ell'}^{(2)}(x,y)$ is
negligible. Denoting by $f_{\ell,x}$ the probability distribution of $\tal$, we first observe that 
$$J_\ell^{(1)}(x)J_{\ell'}^{(1)}(y)=J^{(1)}_{\ell,\ell'}(x,y)+J^{(3)}_{\ell,\ell'}(x,y),$$
with \begin{eqnarray*}
J^{(3)}_{\ell,\ell'}(x,y) & := & \int_{[0,\varepsilon/k]^2}r_1^2r_2^2\1{r_1+r_2\geq\norm{x-y}}f_{\ell,x}(r_1)
f_{\ell',y}(r_2)\,\mathrm d(r_1,r_2) \\ & \leq & \E\left[\tal^2\1{\tal\geq\norm{x-y}/2}\right]\cdot \E\left[
\hat\tau_{\ell'}^2(y)\right] \\ & + & \E\left[\tal^2\right]\cdot\E\left[\hat\tau_{\ell'}^2(y)\1{\hat\tau_{\ell'}(y)
\geq\norm{x-y}/2}\right] \\ & \leq & C(k/n)^4\exp\left(-L(n/k)\norm{x-y}^d/2^d\right), 
\end{eqnarray*}
where the last bound is obtained from Lemma \ref{moments_tau} and Lemma \ref{censored_tau} with suitable positive 
constants $C,L$. By integrating with respect to $x$ and $y$, we get $$\int_{\X^2}J^{(3)}_{\ell,\ell'}(x,y)\,
\mathrm dQ^{\otimes 2}(x,y)\leq 2C\overline q(k/n)^{4/d}\int_0^\infty r^{d-1}\exp\left(-L(n/k)r^d/2^d\right)\,
\mathrm dr=o\left((k/n)^{4/d}\right).$$
It then remains to show that
\[
\fbox{2-6}:=\va{J_{\ell,\ell'}^{(1)}(x,y)-J_{\ell,\ell'}^{(2)}(x,y)}
\]
is negligible compared to the leading term. Using the distributions from Lemmas~\ref{pair_ind_distribution} and
\ref{single_distribution}, and then by Eq.~\eqref{eq:PaPlusIsSigma}, we can show that
\[
\fbox{2-6}\le\int_{[0,\varepsilon/k]^2}F_x(r_1)^{\ell-1}F_y(r_2)^{\ell'-1}O\mtlr(){(r_1r_2)^{d+1}}\va{D_{x,y}}
(r_1,r_2)\,\mathrm d(r_1,r_2),
\]
where \begin{align*}
D_{x,y}(r_1,r_2) & :=C_{n,\ell}C_{n,\ell'}(1-F_x(r_1))^{n-\ell}(1-F_y(r_2))^{n-\ell'} \\ & \phantom{:=}-n(n-1) 
\binom{n-2}{\ell-1}\binom{n-\ell-1}{\ell'-1} \\ & \phantom{:=} \times\1{r_1+r_2<\norm{y-x}}(1-F_x(r_1)-F_y(r_2))^{
n-\ell-\ell'} \\ & =C_{n,\ell}C_{n, \ell'}\1{r_1+r_2<\norm{y-x}} \\ & \times\left\{(1-F_x(r_1))^{n-\ell}
(1-F_y(r_2))^{n-\ell'}-(1+o(1))(1-F_x(r_1)-F_y(r_2))^{n-\ell-\ell'}\right\},
\end{align*}
where $C_{\ell,n}:=n\binom{n-1}{\ell-1}$ and $C_{\ell',n}:=n\binom{n-1}{\ell'-1}$ and we used
$$n(n-1)\binom{n-2}{\ell-1}\binom{n-\ell-1}{\ell'-1}=C_{\ell,n}C_{\ell',n}(1+o(1)).$$

Therefore, we are left to analyse the following difference for $(r_1,r_2)\in[0,\varepsilon/k]^2$ such that $r_1+r_2
<\norm{x-y}$: $$\va{(1-F_x(r_1))^{n-\ell}(1-F_y(r_2))^{n-\ell'}-(1+o(1))(1-F_x(r_1)-F_y(r_2))^{n-\ell-\ell'}}.$$
From Lemma \ref{lem:CanChangeExponentOfOneMinusFExtendedCorrected}, applied successively with $r_1=0$ or $r_2=0$, 
and Lemma \ref{lem:PolyOneMinusFNearExpLnExtended}, one can show that
$(1-F_x(r_1))^{n-\ell}(1-F_y(r_2))^{n-\ell'}$ equals $$\exp(-n\,\vab\,(p(x)r_1^d +p(y)r_2^d))+O\left(\mathcal P
(n,\ell,\ell',r_1,r_2)\right)\exp(-n\,\vab\,(p(x)r_1^d+p(y)r_2^d)L_k),$$
for a suitable polynomial $\mathcal P$ and $L_k:=1-1/(2k)$. 
Similarly, by Lemmas~\ref{lem:CanChangeExponentOfOneMinusFExtendedCorrected} and
\ref{lem:PolyOneMinusFNearExpLnExtended}, $(1-F_x(r_1)-F_y(r_2))^{n-\ell-\ell'}$ equals to 
$$\exp(-n\,\vab\,(p(x)r_1^d-p(y)r_2^d))+O\left(\mathcal P'(n,\ell,\ell',r_1,r_2)\right)\exp(-n\,\vab\,(p(x)r_1^d+
p(y)r_2^d)L_k),$$
for another suitable polynomial $\mathcal P'$.

Applying Lemma~\ref{lem:negligible_integral_simple_simplified}, it is possible to show that \fbox{2-6} is negligible 
compared to $(k/n)^{4/d}$, uniformly in $(x,y,\ell,\ell')$, when $k^{d+1}=o(n)$. Before applying 
Lemma~\ref{lem:negligible_integral_simple_simplified}, one can note that
for $0\leq r\leq\varepsilon/k,\,1\leq \ell\leq k$, and $x\in\widetilde\X$, we have 
$$F_x(r)^{\ell-1}\leq \left(p(x)\,\vab\right)^{\ell-1}\left(1+O(\varepsilon/k)\right)^{k-1}$$
and the second factor is bounded in $k$.

\subsection{Proof of Theorem \ref{catchment}}

We use a general result stating that for any integrable non-negative random variable $Z$ and any $t\geq 0$,
$$\E[(Z-\E[Z])\1{Z\ge t}]\ge 0.$$ It is clear if $t\ge\E[Z]$, and if $t<\E[Z]$, then we can write 
$$\E[(Z-\E[Z])\1{Z\ge t}]=\E[(Z-\E[Z])(1-\1{Z<t})]=-\E[(Z-\E[Z])\1{Z<t}]\ge 0.$$
It also means that for any non-negative random variable $R$, we have \begin{align*}
\Pb(Z\ge t) & \le\frac{\E[Z\1{Z\ge t}]}{\E[Z]} \\ & \le\frac{\E[Z\1{Z\ge t}]}{\E[R\1{Z\ge t}]}\,\frac{\E[R\1{
Z\ge t}]}{\E[Z]} \\ & \le\frac{\E[Z\1{Z\ge t}]}{\E[Z\1{Z\ge t}]-\E[(Z-R)\1{Z\ge t}]}\,\frac{\E[R]}{\E[Z]}.    
\end{align*}
Setting $Z:=\frac nkQ(A_k(X_1)),\,R:=\frac nk\int_\X\1{\norm{X_1-x}\le\ta(x)}\1{\norm{X_1-x}>a}\,\mathrm dQ(x)$,
and $a^d:=\frac{2kt}{3n\qsup\,\vab}$, we first have $$Z-R\le\frac nk\int_\X\1{\norm{X_1-x}\le a}\,\mathrm dQ(x)\le
\frac nk\qsup\vab a^d\le\frac{2t}3.$$ Therefore, $$\frac{\E[Z\1{Z\ge t}]}{\E[Z\1{Z\ge t}]-\E[(Z-R)\1{Z\ge t}]}
\le\frac{\E[Z\1{Z\ge t}]}{\E[Z\1{Z\ge t}]-2t\Pb(Z\ge t)/3},$$ and because for $\alpha\ge 0$, the mapping
$x\mapsto\frac x{x-\alpha}$ is non-increasing on $(\alpha,\infty)$, it gives $$\frac{\E[Z\1{Z\ge t}]}{\E[Z\1{
Z\ge t}]-\E[(Z-R)\1{Z\ge t}]}\le\frac{t\Pb(Z\ge t)}{t\Pb(Z\ge t)-2t\Pb(Z\ge t)/3}=3.$$
Second we have $\E[Z]=1$ and because $\ta(x)$ is independent of the event $(\norm{X_1-x}\le\ta(x))=\left(
\hat\pi^{-1}(1)\leq k\right)$ by Lemma \ref{order_statistics}, where $\hat{\pi}$ denotes the permutation giving 
the random indices of the order statistics of the sample $\left(\norm{X_i-x}\right)_{1\leq i\leq n}$, applying 
Lemma \ref{tail_tau} yields $$\E[R]\le\frac nk\int_\X\Pb(\norm{X_1-x}\le\ta(x))\,\Pb(\ta(x)>a)\,\mathrm dQ(x)\le
e^{1/4}\exp\left(-\frac nk\frac{c\pinf\,\vab}8a^d\right).$$
Putting these two results together yields the statement of Theorem \ref{catchment}.

\subsection{Proof of Corollary \ref{overall_speed}}

We first consider the case $i=2$. Using the bias-variance decomposition $\hat e_2(h)-e(h)=B_{2,n}+(F_{m,n}+E_n)$, 
where \begin{align*}
B_{2,n} & :=\E[\hat e_2(h)\mid X]-e(h)=\int_\X\frac 1k\sum_{i=1}^n(g(X_i)-g(x))\1{\norm{X_i-x}\le\ta(x)}\,
\mathrm dQ(x), \\ F_{m,n} & :=\hat e_2(h)-\E[\hat e_2(h)\mid X,Y]=\meanm\hat g_n(X_j^*)-\int_\X\hat g_n(x)\,
\mathrm dQ(x)\text{, and} \\ E_n & :=\E[\hat e_2(h)\mid X,Y]-\E[\hat e_2(h)\mid X]=\int_\X\frac 1k\sum_{i=1}^n
(h(X_i,Y_i)-g(X_i))\1{\norm{X_i-x}\le\ta(x)}\,\mathrm dQ(x), 
\end{align*} 
and using the fact that $\E[F_{m,n}\mid X,Y]=0$ whereas $E_n$ and $B_n$ are $(X,Y)$-measurable, and $\E[E_n\mid X]=0$
whereas $B_n$ is $X$-measurable, we find that $$\E[(\hat e_2(h)-e(h))^2]=\E[B_{2,n}^2]+\E[F_{m,n}^2]+\E[E_n^2].$$
The control of the second-order bias was established in Theorem \ref{better_bias}. For the variance term, note 
that, conditionally on $(X,Y)$, $F_{m,n}$ is an average of i.i.d. centered random variables, so that 
\begin{align*} 
\E[F_{m,n}^2] & =\frac 1m\E[\Var(\hat g_n(X^*)\mid X,Y)] \\ & \leq\frac 1m\,\E\left[\int_\X\hat g_n^2(x)\,
\mathrm dQ(x)\right] \\ & \leq\frac 1m\int_\X\E\left[\frac{1}{k}\sum_{i=1}^n h^2(X_i,Y_i)\1{\norm{X_i-x}\le\ta(x)}
\right]\,\mathrm dQ(x)\text{ by Cauchy-Schwarz inequality,} \\ & =\frac 1m\int_\X\E[\Var(h(X,Y)\mid X)+g(X)^2] \\ &
\leq\frac{\bar\s^2+\norm g_\infty^2}m.
\end{align*}
Now observe that $E_n=\meann\left(\frac nkQ(A_k(X_i))\right)(h(X_i,Y_i)-g(X_i))$, and that conditionally on $X$, 
the random variables $Q(A_k(X_i))(h(X_i,Y_i)-g(X_i)),\,i=1,\dots,n$ are centered and independent. Thus, using the
boundedness assumption \ref{cond:reg7} on the conditional variance, we have $$\E[E_n^2]\leq\frac{\bar\s^2}n
\E\left[\left(\frac nkQ(A_k(X_1))\right)^2\right].$$
Now, applying Theorem \ref{catchment} yields \begin{align*}
\E\left[\left(\frac nkQ(A_k(X_1))\right)^2\right] & =\int_0^\infty 2t\,\Pb\left(\frac nkQ(A_k(X_1))\geq t\right)\,
\mathrm dt \\ & \leq 6e^{1/4}\,\int_0^\infty t\exp\left(-\frac{c\pinf}{12\qsup}\,t\right)\,\mathrm dt \\ & \leq
6e^{1/4}\left(\frac{12\qsup}{c\pinf}\right)^2. 
\end{align*}
It means that setting $K_{P,Q,h}:=\frac{2^53^3e^{1/4}\qsup^2\bar\s^2}{c^2\pinf^2}$, we have $$\E[E_n^2]\leq 
K_{P,Q,h}\;\frac 1n.$$ This concludes the proof for the case $i=2$.

If now $i=1$, the variance part $V_{1,n}:=\hat e_1(h)-e(h)-B_{1,n}$ has been already studied in 
\citet{portier2023scalable}, Theorem $1$. In particular, we have $$\E[V_{1,n}^2]=O\left(m^{-1}+n^{-1}\right).$$
The proof then follows from Theorem \ref{better_bias}.

\subsection{Proof of Theorem \ref{gate_bias}}

For the global average treatment effect, we can write $$B=\frac 1N\sum_{\underset{W_i=1}{i=1}}^N\left(
\frac 1k\sum_{\ell=1}^k\left(g_0(X_i)-g_0\left(X_{\hat i_\ell(X_i)}\right)\right)\right)-\frac 1N\sum_{\underset{
W_i=0}{i=1}}^N\left(\frac 1k\sum_{\ell=1}^k\left(g_1(X_i)-g_1\left(X_{\hat i_\ell(X_i)}\right)\right)\right).$$ 
We work conditionally on the treatment $W$ and we apply twice Theorem \ref{better_bias}, first by setting $P$ the
distribution of $X$ given $W=0$ and $Q$ the distribution of $X$ given $W=1$, and second by inverting the roles of
$W=0$ and $W=1$. Assumptions \ref{cond:T3} and \ref{cond:T5} along with Bayes' theorem ensure that both conditional
densities $f_0$ and $f_1$ are bounded away from zero and from infinity. Applying Theorem \ref{better_bias}, the 
two terms in the bias are of order $\frac{N_1}N\left(\frac k{N_0}\right)^{\min\{3/2,2/d\}}$ and $\frac{N_0}N
\left(\frac k{N_1}\right)^{\min\{3/2,2/d\}}$, respectively, where it is assumed that $N_0$ or $N_1$ do not vanish.
However, using Assumption \ref{cond:T5}, one can integrate this bounds and get the required rates. 
Indeed, setting $a:=\min\{3,4/d\}$ and $p_w=\Pb(W=w)$ for $w\in\{0,1\}$, we have for some $\delta\in (0,1)$,
\begin{eqnarray*}
\E\left(N^aN_w^{-a}\1{N_w\geq 1}\right) & \leq & \E\left(N^aN_w^{-a}\1{N_w\leq(1-\delta)Np_w}\right)+N^a\left(
(1-\delta)Np_w\right)^{-a} \\ & \leq & N^a\Pb\left(N_w\leq(1-\delta)Np_w\right)+(1-\delta)^{-a}p_w^{-a} \\
& \leq & N^a\exp\left(-\frac{\delta^2Np_w}2\right)+(1-\delta)^{-a}p_w^{-a},
\end{eqnarray*}
which is bounded in $N$. The last inequality is based on the Chernoff concentration bound on binomial random 
variables, see for instance \citet{boucheron2013concentration}. The previous bounds complete the proof.

\subsection{Proof of Proposition \ref{assumption_counter}}

\begin{enumerate}
\item For $z\in[0,1]$, let $(z,z^2),\,(1,z)$ and $(z,0)$ be three possible points of the boundary of $\X$. We give 
a lower bound of the distance between $(x,y)\in\X$ and the point $(z,z^2)$ using the point $(x,x^2)$. We have 
$$(x^2-y)^2\leq 2(x^2-z^2)^2+2(z^2-y)^2\leq 8(x-z)^2+2(z^2-y)^2,$$ which yields $$\norm{(x,y)-(z,z^2)}^2\geq
\frac 18(x^2-y)^2.$$ 
On the other hand, we have $$\norm{(x,y)-(1,z)}^2\geq(1-x)^2\text{ and }\norm{(x,y)-(z,0)}^2\geq y^2.$$ 
We then obtain $\delta((x,y),\X^c)^2\geq\min\{(x^2-y)^2/8\,,\,(1-x)^2\,,\,y^2\}$, so that $$\exp(-L\delta((x,y),
\X^c)^2)\leq\exp(-L(x^2-y)^2/8)+\exp(-L(1-x)^2)+\exp(-Ly^2).$$
Since $$\sqrt L\int_\X\left(\exp(-L(1-x)^2)+\exp(-Ly^2)\right)\,\mathrm d(x,y)\leq 2\int_{\R_+}\exp(-z^2)\,
\mathrm dz,$$ and $$\sqrt L\int_\X\exp(-L(x^2-y)^2/8)\,\mathrm d(x,y)\leq\sqrt L\int_0^1\int_0^{x^2}\exp(-Lz^2/8)
\,\mathrm dz\mathrm dx\leq\int_{\R_+}\exp(-z^2/8)\,\mathrm dz,$$ we conclude that Assumption \ref{cond:regA} is 
verified.

On the other hand, if $x\leq\varepsilon/2$, $$B((x,y),\varepsilon/2)\cap\X\subset\{(x,y)\in\R^2\,:\,x\in
[0,\varepsilon],\,y\in[0,x^2]\},$$ whose volume is upper bounded by $$\int_0^\varepsilon x^2\,\mathrm dx=
\frac{\varepsilon^3}3.$$ Condition \ref{cond:reg2} is not verified since a lower bound by $\varepsilon^2$, up to 
a positive constant, is impossible.

\item Ignoring the normalisation constant $1/\va\X$ and setting for $k\ge 1,\,c_k:=b_k-a_k$, we have 
\begin{align*}
\sqrt L\int_\X\exp(-L\delta(x,\X^c)^2)\,\mathrm dx & =\sqrt L\sum_{k=1}^\infty\int_{\mathcal C_k}\exp(-L
\delta(x,\X^c)^2)\,\mathrm dx \\ & \geq\sqrt L\sum_{k=1}^\infty\int_{a_k\leq\norm x\leq\frac{a_k+b_k}2}\exp(-L
(\norm x-a_k)^2)\,\mathrm dx \\ & =\sqrt L\,\sigma(S^1)\sum_{k=1}^\infty\int_{a_k}^{\frac{a_k+b_k}2}\exp(-L(r-
a_k)^2)r\,\mathrm dr \\ & =\sqrt L\,\sigma(S^1)\sum_{k=1}^\infty\int_0^{\frac{c_k}2}\exp(-Lr^2)(r+a_k)\,\mathrm dr 
\\ & \geq\sqrt L\,\sigma(S^1)\sum_{k=1}^\infty a_k\int_0^{\frac{c_k}2}\exp(-Lr^2)\,\mathrm dr \\ & =\sigma(S^1)
\sum_{k=1}^\infty a_k\int_0^{\frac{c_k\sqrt L}2}\exp(-r^2)\,\mathrm dr.
\end{align*}
When $L\to\infty$, monotone convergence ensures that the latter lower bound goes to $$\left(\sigma(S^1)
\int_0^\infty\exp(-r^2)\,\mathrm dr\right)\sum_{k=1}^\infty a_k=\infty,$$
which shows that Condition \ref{cond:regA} is not verified.
Now we show that $\X$ satisfies Condition \ref{cond:reg2}. Let $x\in\X$, suppose first that $\norm x\leq 2
\varepsilon$. Setting for $k\ge 1,\,\delta_k:=a_{k-1}-b_k$, we have the bound $$\va{B(x,\varepsilon)\cap\X}\geq
\pi\varepsilon^2-C\varepsilon\sum_{k=1}^\infty\delta_k\1{b_k<\norm x+\varepsilon}.$$
Indeed, we have to subtract to the area of the disc the areas of all its intersections with some rings with
thickness $\delta_k$. Any intersection of this type has an area bounded by $C\delta_k\varepsilon$ with some
$C>0$ not depending on $x,k,\varepsilon$. Because $\delta_k\leq\delta/(k+1)^4$ and $1/(k+1)=a_k\leq b_k$, we then
obtain $$\va{B(x,\varepsilon)\cap\X}\geq\pi\varepsilon^2-C\delta\varepsilon\sum_{k=1}^\infty\frac{\norm x+
\varepsilon}{(k+1)^3}\geq\pi\varepsilon^2-3C\delta\varepsilon^2\sum_{k=1}^\infty\frac 1{(k+1)^3},$$ and the lower
bound follows if $\delta$ is small enough. 

We next assume that $\norm x>2\varepsilon$ and we consider three cases. \begin{itemize}
\item Assume first that there is no circle with radius $a_l$ that intersects $B(x,\varepsilon)$. Let $k\ge 0$ such
that $x\in\mathcal C_k$. Then the ball of center $x-\varepsilon u/2$, where $u=x/\norm x$ and of radius
$\varepsilon/2$ is included in $\mathcal C_k\cap B(x,\varepsilon)$ and $\va{B(x,\varepsilon)\cap\X}\geq\pi
\varepsilon^2/4$.
\item Suppose next that there is only one circle with radius $a_\ell$ that intersects the ball $B(x,\varepsilon)$.
Since $x\in\X$, we only have two possibilities. The first one is $$a_k\leq\norm x-\epsilon<\norm x\leq b_k<
a_{k-1}\leq\norm x+\varepsilon$$ for some $k\ge 1$. In this case, a ball of radius $\varepsilon/2$ can be included
in $B(x,\varepsilon)\cap\mathcal C_k$ as above. The second case is $$a_{k+1}<\norm x-\varepsilon\leq a_k\leq 
\norm x<\norm x+\varepsilon<a_{k-1}.$$
In this case, since $\varepsilon\leq a_{k-1}-a_k=b_k-a_k+\delta_k\leq 2(b_k-a_k)$, we get $b_k-a_k\geq
\varepsilon/2$ and we deduce that $B(x,\varepsilon)\cap\mathcal C_k$ contains a ball of radius $\varepsilon/4$.
\item Finally, we consider the case for which $j\geq 2$ circles with radius $a_\ell$ intersect the ball 
$B(x,\varepsilon)$. Suppose that $a_k,\dots,a_{k+j-1}$ are the radii of the circles that intersect 
$B(x,\varepsilon)$. It automatically means that $$\frac{2\varepsilon}{\norm x^2-\varepsilon^2}=(\norm x -
\varepsilon)^{-1}-(\varepsilon+\norm x)^{-1}\geq a_{k+j-1}^{-1}-a_k^{-1}=j-1.$$
We then get $\norm x^2\leq\varepsilon^2+2\varepsilon/(j-1)$. However, $\va{B(x,\varepsilon)\cap\X}\geq\pi
\varepsilon^2-\eta\varepsilon\sum_{s=0}^j\delta_{k+s}$ for some universal constant $\eta>0$. Indeed, we have to 
subtract from the area of the ball some of its intersections with rings of thickness $\delta_s$ for some values of
$s$. Note that the case $s=k$ has to be considered. Besides $$\sum_{s=0}^j\delta_{k+s}\leq\frac{\delta(j+1)}{
(k+1)^4}\leq 2\delta(\norm x+\varepsilon)^4\left(\frac\varepsilon{\norm x^2-\varepsilon^2}+1\right).$$
Using our inequalities $4\varepsilon^2\leq\norm x^2\leq\varepsilon^2+2\varepsilon$, we obtain that
$$\va{B(x,\varepsilon)\cap\X}\geq\pi\varepsilon^2-D\delta\varepsilon^2$$ for some $D>0$ not depending on $x$
and $\varepsilon$. This shows \ref{cond:reg2} if $\delta>0$ is small enough.
\end{itemize}
\end{enumerate}

\subsection{Proof of Theorem \ref{geometry_cases}}

\begin{enumerate}
\item Suppose that $\X$ is a compact convex subset of $\R^d$ with a non-empty interior. We show that $\X$ 
satisfies Condition \ref{cond:reg2} by constructing the cone $\mathfrak C^1$ depicted in
Figure~\ref{fig:illust_cvx_implies_X2} as follows. Let $B:=B(z_0,\varepsilon)$ be an open ball contained in $\X$ 
and let $x\in\X$. There exists $z\in B$ such that $B(z,\varepsilon/2)\subset B$ and $\norm{z-x}\geq\varepsilon/2$.
Let $H$ be the hyperplane of $\R^d$ containing $z$ and orthogonal to the line segment $[x;z]$ and let $B':=B\cap
H$. Then the cone $\mathfrak C^1=\operatorname{Conv}(x,B')$ is included in $\X$ by convexity.
It means that for all $0\le r\le\diam(\X),\,\va{B(x,r)\cap\X}\geq\va{B(x,r)\cap\mathfrak C^1}$.

However, if $r<\varepsilon/2$, then $r$ is smaller than the height of the cone.
The proportion of the ball $B(x,r)$ contained in the cone as a volume is an increasing function of the cone's 
angle with vertex $x$. This angle is always greater than $\arctan(\varepsilon/(2\diam(\X)))$. Indeed, if $u\in B'$
with $\norm{u-z_0}=\varepsilon$, the tangent of this angle equals the ratio $\norm{u-z}/\norm{z-x}$, with 
$\norm{z-x}\leq\diam(\X)$ and $\norm{u-z}\geq\varepsilon/2$, since $B(z,\varepsilon/2)\subset B$. 

It means that there exists a constant $c_0\in(0,1)$ determined by $d,\,\varepsilon$, and $\diam(\X)$, such that 
for all $0\le r\le\varepsilon/2,$ we have $\va{B(x,r)\cap\X}\geq c_0\va{B(x,r)}$. \\ Then Assumption 
\ref{cond:reg2} is satisfied for $c=\left(\frac\varepsilon{2\diam(\X)}\right)^dc_0>0$.

\begin{figure}[H]
        \centering
        \includegraphics[height=5.8cm]{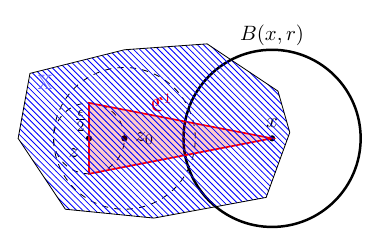}  
        \caption{Illustration for the proof of Theorem~\ref{geometry_cases}-1. Inside $\X$, we construct a cone $\mathfrak C^1$ and a sufficiently large angle with vertex $x$ so that its intersection with $B(x,r)$ will have a volume that grows proportionally to $\va{B(x,r)}$ as $r$ increases. Such a cone can be constructed by taking the convex hull of the ball $B(z,\varepsilon/2)$ and $x$.}
        \label{fig:illust_cvx_implies_X2}
    \end{figure}
    
    \begin{figure}[H]
        \centering
        \includegraphics[width=0.8\linewidth]{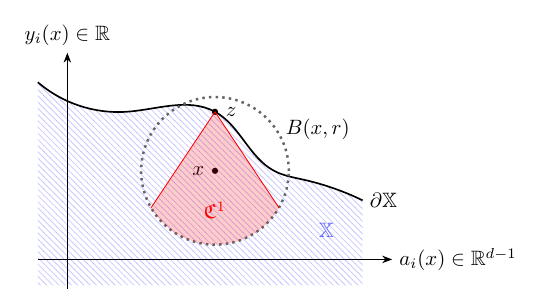}
        \caption{Illustration for the proof of Theorem~\ref{geometry_cases}-3. when $x\in\X$ is close to the border.
        Take $z \in \partial X$ by up-shifting the coordinate $y_z$ of $z$.
        The ball $B(x,r)$ has a larger intersection with $\X$ than $B(z_s,r)$.
        To show $\va{B(x,r)\cap\X}\ge c\va{B(x,r)}$ for some constant $c>0$, we construct a cone $\mathfrak C^1$ that clips at least a constant proportion of $B(x,r)$ inside $\X$, similarly to Figure~\ref{fig:illust_cvx_implies_X2}.
        We can find such a cone with a positive vertex angle using the fact that a coordinate (the vertical axis in the figure) of the graph of $\partial\X$ as a function of the other coordinates cannot deviate at a rate larger than a Lipschitz constant.
        We can ensure this by taking $\mathfrak C^1$ in a way that its surface does not drop vertically, thanks to the Lipschitz continuity of the boundary. The vertex angle and thus the clipped volume can depend on the location $z$, but its minimum is still positive thanks to the compactness.}
        \label{fig:illust_c1manifold_implies_X2}
 \end{figure}
    
    \begin{figure}[H]
       \centering
       \includegraphics[width=9cm,height=7cm]{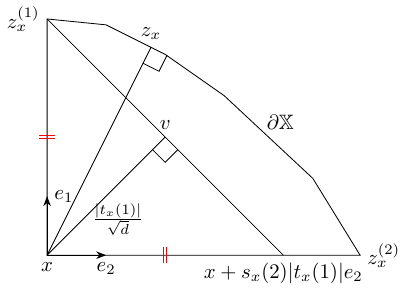}
       \caption{Illustration for the proof of Theorem~\ref{geometry_cases}-2 with the Euclidean norm $\norm\cdot$. Without loss of generality, suppose that $\va{t_x(1)}=\min_{1\le j\le d}\va{t_x(j)}$. The norm $\norm{
       z_x-x}$ is larger than that of the perpendicular line between $x$ and $v$: $\norm{z_x-x}\ge\norm{x-v}= \frac{\va{t_x(1)}}{\sqrt d}$.}
       \label{fig:illust_cvx_implies_A}
    \end{figure}
    
    \begin{figure}[H]
        \centering
        \includegraphics[width=\linewidth]{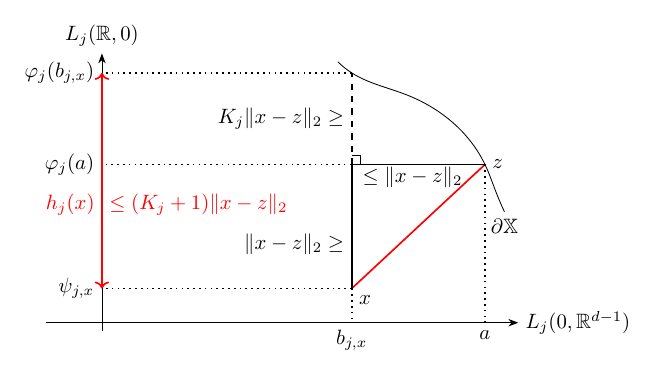}
        \caption{Illustration for the proof of Theorem~\ref{geometry_cases}-4. The distance $\norm{x-z}$ is lower bounded by $h_j(x):=\va{\varphi_j(b_{j,x})-\psi_{j,x}}$, up a constant factor. $h_j(x)$ is simply the distance between $x$ and a point on the graph having the same $d-1$ first coordinates $b_{j,x}$. The exponential function $x\mapsto\exp\left(-L\delta(x,\X^c)^d\right)$ will be then first integrated with respect to the last coordinate.}
        \label{fig:illust_c1manifold_implies_A}
 \end{figure}

\item Suppose that $\X$ is a compact convex subset of $\R^d$ with a non-empty interior. We show that $\X$ 
satisfies Condition \ref{cond:regA}. First we remind that the boundary of a convex subset has a vanishing
Lebesgue measure. For simplicity, we give a short proof of this fact. Without loss of generality, we assume that
$0\in\X^\circ$. If $x\in\partial\X$, then for any $\varepsilon\in(0,1),\,(1-\varepsilon)x\in\X^\circ$. Indeed,
$(1-\varepsilon)x$ is located in the interior of the convex cone $\{(1-\lambda)x+\lambda u\,:\,\lambda\in[0,1],
u\in B(0,r)\}$ which is contained in $\X$ as soon as $B(0,r)\subset\X$. We then deduce that $\partial\X\subset
(1-\varepsilon)^{-1}\X^\circ\backslash\X^\circ$ and $$\Leb(\partial\X)\leq\Leb((1-\varepsilon)^{-1}\X^\circ)-
\Leb(\X^\circ)=\left(\frac 1{(1-\epsilon)^d}-1\right)\Leb(\X^\circ).$$ Letting $\varepsilon\to 0$, we get the
result.

Now, let $x\in\X^\circ$. We denote by $z_x$ a point of $\partial\X$ that minimises the distance between $x$ and 
the compact set $\partial\X$. For a real number $y$, we denote by $s(y)$ the sign of $y,\,s(y):=1$ if $y\geq 0,\,
s(y):=-1$ if $y<0$. We also set $s_x(i):=\sign(e_i^\top z_x-e_i^\top x)$, where $e_1,\dots,e_d$ are the canonical 
basis vectors of $\R^d$.
Define $$t_x(i):=s_x(i)\inf\{t>0\,:\,x+ts_x(i)e_i\in\partial\X\}.$$ We will prove that $\underset{1\le i\le d}
\min\va{t_x(i)}\le\sqrt d\norm{x-z_x}$ (see Figure~\ref{fig:illust_cvx_implies_A}). 
Suppose that $\underset{1\le i\le d}\min\va{t_x(i)}\le\sqrt d\norm{x-z_x}$. We will obtain a contradiction by 
showing that $z_x$ can be only located in the interior of $\X$. Indeed, if $\lambda_1,\dots,\lambda_d$ are
non-negative real numbers such that $\sum_{i=1}^d\lambda_i=1$, then $\sum_{i=1}^d\lambda_iz_x^{(i)}=x+\sum_{i=1}^d
\lambda_it_x(i)e_i$ are elements of the convex set $\X$, where $z_x^{(i)}=x+t_x(i)e_i$.
Suppose that $z_x=x+\sum_{i=1}^d\beta_ie_i$, where $\beta_i$ has the same sign as $t_x(i)$ for $i=1,\dots,d$.
Setting for some $t>0$, $$\lambda_i:=\frac{t\beta_i}{t_x(i)}=\frac{t\va{\beta_i}}{\va{t_x(i)}},\,i=1,\dots,d,$$ 
the condition $\sum_{i=1}^d\lambda_i=1$ is equivalent to $$t=\left(\sum_{i=1}^d\va{\beta_i}/\va{t_x(i)}
\right)^{-1}.$$ Note that $t>1$ because
$$\sum_{i=1}^d\frac{\va{\beta_i}}{\va{t_x(i)}}<\frac{\sum_{i=1}^d\va{\beta_i}}{\sqrt d\norm{x-z_x}}=\frac{
\sum_{i=1}^d\va{\beta_i}}{\sqrt d\sqrt{\sum_{i=1}^d\beta_i^2}}\leq 1,$$
where the last inequality follows from the Cauchy-Schwarz inequality. We deduce that $x+t(z_x-x)\in\X$ for some
$t>1$ and then $z_x$ is the interior of $\X$, which contradicts the fact that it is a point on the boundary. 
We conclude that $\min_{1\le i\le d}\va{t_x(i)}\le\sqrt d\norm{x-z_x}$.

By the equivalence of the norms, there exists $\eta>0$ such that $$\norm{x-z_x}\geq\eta\underset{1\le i\le d}\min
\va{t_x(i)}.$$ But from its definition, we have the lower bound $$\va{t_x(i)}\geq g_i(x):=\inf\{\norm{x-z}\,:\,z
\in\partial\X,\,z_j=x_j,\,j\neq i\}.$$
Next for $i=1,\dots,d$, let $$I_i((x_j)_{j\neq i}):=\{z_i\in\R\,:\,z\in\X\text{ if }z_j=x_j,\,j\neq i\}$$ be the
$i$th section of the convex set $\X$ at point $x$. By convexity, these sections are closed intervals, that is
$$I_i((x_j)_{j\neq i})=\left[a(x_j)_{j\neq i},b(x_j)_{j\neq i}\right]$$ for some real numbers $a(x_j)_{j\neq i}<
b(x_j)_{j\neq i}$. We now have the bound $$\exp\left(-L\delta(x,\X^c)^d\right)\leq\sum_{i=1}^d\exp\left(-L\eta^d
g_i(x)^d\right).$$ Moreover, for $i=1,\dots,d$, we have \begin{align*}
\int_{a(x_j)_{j\neq i}}^{b(x_j)_{j\neq i}}\exp(-L\eta^dg_i(x)^d)\,\mathrm dx_i & =\int_{a(x_j)_{j\neq i}}^{b(x_j)_{
j\neq i}}\exp(-L\eta^d\min\{x_i-a(x_j)_{j\neq i}\;,\;b(x_j)_{j\neq i}-x_i)\}\,\mathrm dx_i \\ & =2L^{-1/d}\int_0^{
\left(b(x_j)_{j\neq i}-a(x_j)_{j\neq i}\right)L^{1/d}}\exp(-\eta^dv^d)\,\mathrm dv.   
\end{align*}

Assuming that $\X\subset[r,R]^d$ for some real numbers $r<R$, from the Fubini therorem, we then deduce that
\begin{align*}
\fbox{1-1} & :=L^{1/d}\int_\X\exp(-L\delta(x,\X^c)^d)\,\mathrm dQ(x) \\ & \leq L^{1/d}\qsup\sum_{i=1}^d\int_{x_j\in
[r,R],j\neq i}\left\{\int_{a(x_j)_{j\neq i}}^{b(x_j)_{j\neq i}}\exp(-L\eta^dg_i(x)^d)\,\mathrm dx_i\right\}\prod_{
j\neq i}\mathrm dx_j \\ & \leq 2d(R-r)^{d-1}\qsup\int_0^\infty\exp(-\eta^dv^d)\,\mathrm dv.
\end{align*}
The last upper bound is finite and the proof follows.

\item Suppose that $\X$ is the closure of a bounded open set whose boundary $\partial\X$ is a $(d-1)$-dimensional
submanifold of $\R^d$ of class $C^1$. We show that $\X$ satisfies Condition \ref{cond:reg2} by constructing the
truncated cone $\mathfrak C^1$ depicted in Figure~\ref{fig:illust_c1manifold_implies_X2}. We use the description 
of submanifolds of $\R^d$ through graphs; see for instance \citet[Theorem~1.21]{lafontaine2015introduction}: for 
all $z\in\partial\X$, there exist an open neighbourhood $V_z$ of $z$ in $\R^d$, an permutation of the coordinates
$L_z\in\mathcal O_d(\R)$, an open subset $A_z$ of $\R^{d-1}$, and a map $\varphi_z\in C^1(B_z,\R)$ such that
$V_z\cap\partial\X=\{L_z(a,\varphi_z(a))\,:\,a\in A_z\}$. 
If we reduce $V_z$ and $A_z$, we can assume that $V_z$ and $A_z$ are convex, that $V_z\subset L_z(A_z\times\R)$, 
and that the gradient $\nabla\varphi_z$ is bounded by $K_z\geq 0$ on $A_z$. By connectedness, we can show that 
$V_z\cap\X$ is either included in the subgraph or the supergraph of $\varphi_z$. We give a short proof below. \\
We partition $V_z\subset L_z(A_z\times\R)$ into the boundary $\partial\X\cap V_z$, the supergraph $V_z^+:=\{
L_z(a,y)\in V_z\,:\,y>\varphi_z(a)\}$, and the subgraph $V_z^-:=\{L_z(a,y)\in V_z\,:\,y<\varphi_z(a)\}$. Because
$\varphi_z$ is continuous, $V_z^+$ is a connected open subset of $\R^d$, which is covered by the two open sets
$\X^\circ$ and $(\bar\X)^c$, so that one of the two intersections $V_z^+\cap\X^\circ$ and $V_z^+\cap(\bar\X)^c$ is 
empty. The same applies to $V_z^-$. We cannot have $V_z^+\cap(\bar\X)^c=V_z^-\cap(\bar\X)^c=\emptyset$ since it 
would imply that $z\in\X^\circ$. Then we either have $V_z\cap\X^\circ\subset V_z^-$ or $V_z\cap\X^\circ\subset 
V_z^+$. Because $z\in\partial\X$ is in the closure of $\X^\circ$, reducing further $V_z$, we can assume that 
$V_z\cap\X^\circ$ is exactly either the subgraph or the supergraph of $\varphi_z$. 
\bigskip

Because $\partial\X$ is compact, there exist $z_1,\dots,z_k\in\partial\X$ for some $k\in\N$ such that $V_{z_1},
\dots,V_{z_k}$ is a subcover of $\partial\X$. In what follows, we simply write $V_i:=V_{z_i},\,A_i:=A_{z_i},\,
L_i:=L_{z_i},\,\varphi_i:=\varphi_{z_i}$, and $K_i:=K_{z_i}$. First, by compactness, we can find $\varepsilon_b>0$
such that for all $x\in\X\backslash\cup_{i=1}^kV_i,\,\delta(x,\partial\X)\geq\varepsilon_b$. We set $S:=\overline{
\cup_{z\in\partial\X}B(z,\varepsilon_b/2)}$ a compact subset of $\cup_{i=1}^kV_i$. Let $x\in\X$, if $x\not\in S$,
then $x$ is sufficiently in the interior of $\X$, so that $B(x,\varepsilon_b/2)\subset\X$. It implies that for all
$r\in[0;\varepsilon_b/2],\,\va{B(x,r)\cap\X}=\va{B(x,r)}$, and for $r\in[\varepsilon_b/2;\diam(\X)]$, we can write
$\va{B(x,r)\cap\X}\geq\frac{\va{B(x,\varepsilon_b/2)\cap\X}}{\va{B(x,\diam(\X))}}\times\va{B(x,\diam(\X))}\geq
\left(\frac{\varepsilon_b}{2\diam(\X)}\right)^d\va{B(x,r)}$. \\
We now consider the case where $x\in S\subset\cup_{i=1}^kV_i$ is close to the boundary. Then there exist $1\le 
i\le k$ and $\varepsilon_A>0$ such that $B(x,\varepsilon_A)\subset V_i$. By compactness of $S$, $\varepsilon_A$ 
can be chosen independently from $x$. We set $a_i(x)\in A_i$ and $y_i(x)\in\R$ such that $x=L_i(a_i(x),y_i(x))$. 
We will assume that is $V_i\cap\X$ is the subgraph of $\varphi_i$, the supergraph case being dealt with in the 
same way. \\
We now consider the point $z:=L_i(a_i(x),\varphi_i(a_i(x)))$ on the boundary $\partial\X$ that is right above $x$.
By the mean value inequality, for all $a\in A_i$, $\va{\varphi_i(a_i(x))-\varphi_i(a)}\leq K_i\norm{a_i(x)-a}$. 
This means that the slope of the boundary does not drop vertically around $z$. We construct the set $\mathfrak C^1$
depicted in Figure~\ref{fig:illust_c1manifold_implies_X2} as the intersection of the cone with vertex $z$,
directed downside or towards $x$, with slope $K_i$, and the ball $B(x,\varepsilon_A)$ that is included in $V_i$.
Formally, $$\mathfrak C^1:=\operatorname{Conv}\{L_i(a,\varphi_i(a_i(x))-K_i\norm{a_i(x)-a})\mid a\in A_i\}\cap
B(x,\varepsilon_A).$$ It is included in $V_i$ and in the subgraph of $\varphi_i$, and therefore it is included in
$\X$. We can then control the proportion of balls centered in $x$ that are inside $\X$ as in the convex case. If
$r\in[0;\varepsilon_A]$, then $$\va{B(x,r)\cap\X}\geq\va{B(x,r)\cap\mathfrak C^1}\geq c(d,K_i,\varepsilon_A,
\diam(\X))\va{B(x,r)}.$$ If $r\in[\varepsilon_A;\diam(\X)]$, then $\va{B(x,r)\cap\X}\geq c(d,K_i,\varepsilon_A,
\diam(\X))\left(\frac{\varepsilon_A}{\diam(\X)}\right)^d\va{B(x,r)}$. \\ In the end, we showed that Condition 
\ref{cond:reg2} was satisfied for $$c:=\left(\frac{\varepsilon_b}{2\diam(\X)}\right)^d\wedge\underset{1\le i\le k}
\min c(d,K_i,\varepsilon_A,\diam(\X))\times\left(\frac{\varepsilon_A}{\diam(\X)}\right)^d>0.$$

\bigskip

\item We keep the same notation as in the previous point and still consider a covering $\cup_{i=1}^kV_i$ of the 
boundary $\partial \X$ such that on each $V_i$, the boundary $\partial\X$ can be represented as a graph of a 
Lipschitz mapping $\varphi_i$. Now, let $x\in\X$ and $z\in\partial\X$, we want to lower bound the distance between 
$x$ and $z$. Without loss of generality, we assume that $V_i=B\left(z_i,\varepsilon_i\right)$ for some
$z_i\in\partial\X$ and $\varepsilon_i>0$. Suppose first that $x\notin\cup_{i=1}^kV_i$. Then, for $z\in\partial\X$
and $1\le i\le k$, $$\norm{x-z}\geq\norm{x-z_i}-\norm{z-z_i}\geq\varepsilon_i-\norm{z-z_i}.$$
We deduce that $$\norm{x-z}\geq\underset{1\leq i\leq k}\max\left\{\varepsilon_i-\norm{z-z_i}\right\}.$$
Note that the mapping $$z\mapsto g(z):=\underset{1\leq i\leq k}\max\left\{\varepsilon_i-\norm{z-z_i}\right\}$$
is continuous (as a maximum of a finite number of continuous mappings) and positive on the compact set 
$\partial \X$, we deduce that $\delta(x,\X^c)\geq\inf_{z\in\partial\X}g(z)>0$.

Next, suppose that for a non empty subset $J$ of $\ieu[k],\,x\in V_j$ for $j\in J$ and $x\notin V_\ell$ for
$\ell\notin J$. Suppose first that there is $j\in J$ such that $z\in V_j$, and set $z:=L_j(a,\varphi_j(a))$ for 
some $a\in A_j$ and $x=L_j(b_{j,x},\psi_{j,x})$ for some $(b_{j,x},\psi_{j,x})\in A_j\times\R$.
Our aim here is to lower bound the distance from $x$ to $z$ by its distance with respect to the point 
$L_j(b_{j,x},\phi_j(b_{j,x}))$ located on the graph of $\phi_j$. See Figure~\ref{fig:illust_c1manifold_implies_A} 
for an illustration. We have \begin{align*}
\va{\psi_{j,x}-\varphi_j(b_{j,x})} & \le\va{\psi_{j,x}-\varphi_j(a)}+\va{\varphi_j(a)-\varphi_j(b_{j,x})} \\ &
\le\norm{x-z}+K_j\norm{a-b_{j,x}} \\ & \le(K_j+1)\norm{x-z}.
\end{align*}
Now if $z$ is in the compact set $\partial\X\setminus\cup_{j\in J}V_j$, we still have $$\norm{x-z}\geq c_J:=
\underset{z\in\partial\X\setminus\cup_{j\in J}V_j}\inf\underset{\ell\notin J}\max\left\{\varepsilon_{\ell}-
\norm{z-z_\ell}\right\}>0,$$ as we obtained before when $J=\emptyset$. Note that $c_J$ coincides with 
$\underset{z\in\partial\X}\inf g(z)$ in the latter case.
Setting $\eta:=\underset{1\leq i\leq k}\min(1+K_i)^{-1}$, we conclude that $\delta(x,\partial\X)\geq\eta\min\{c_J\;,
\;h_J(x)\}$, where $$h_J(x):=\underset{j\in J}\min\va{L_j^{-1}(x)_d-\varphi_j\left(\pi_{d-1}\circ L_j^{-1}(z)
\right)},$$ $L_j^{-1}(x)_d$ denotes the last coordinates of the vector $L_j^{-1}(x)$, and $\pi_{d-1}(x)=
(x_1,\ldots,x_{d-1})$. Setting for $J\subset\ieu[k],\,V_J:=\cap_{j\in J}V_j\cap\cap_{\ell\in\ieu[k]\setminus J}
V_\ell^c$, the set $\left\{V_J:J\subset\ieu[k]\right\}$ is a partition of $\R^d$ and we obtain 
\begin{align*}
\fbox{1-2} & =\int_\X\exp(-L\,\delta(x,\X^c)^d)\,\mathrm dQ(x) \\ & \leq\sum_{J\subset\ieu[k]}\int_{B_J}\exp(-L
(\eta c_J)^d)\,\mathrm dQ(x)+\sum_{J\subset\ieu[k],J\neq\emptyset}\int_{B_J}\exp(-L(\eta h_J(x))^d)\,\mathrm dQ(x) 
\\ & \leq 2^k\left(CL^{-1/d}+\sum_{i=1}^k\int_{B_j}\exp\left(-L\eta^d\va{L_j^{-1}(x)_d-\varphi_j\left(\pi_{d-1}
\circ L_j^{-1}(x)\right)}^d\right)\,\mathrm dx\right) \\ & \leq C\left(L^{-1/d}+\sum_{j=1}^k\int_{L_j^{-1}\X}\exp(-L
\eta^d\va{x_d-\varphi_j(x_1,\ldots,x_{d-1})}^d)\;\mathrm dx\right) \\ & \leq C\left(L^{-1/d}+\int_{\R_+}\exp(-L
\eta^d y^d)\,\mathrm dy\right) \\ & \leq CL^{-1/d}.
\end{align*}
For the third and the fourth inequality given above, we first use a change of variables involving the orthogonal 
transformation $L_j$ and we then apply  Fubini's therorem using the fact that $L_j^{-1}\X$ and its sections are 
bounded sets and thus have a finite Lebesgue measure.

\item If $\X_1$ and $\X_2$ satisfy Condition \ref{cond:reg2}, then it follows directly from the definition that
$\X_1\cup\X_1$ satisfies Condition \ref{cond:reg2}.

\item Suppose that $\X_1$ and $\X_2$ satisfy Condition \ref{cond:regA}. We have the inequality, for all $x\in
\R^d$, $\delta(x,(\X_1\cup\X_2)^c)\geq\max\{\delta(x,\X_1^c),\delta(x,\X_2^c)\}$. Therefore if $$\mathbb L_\X:=
\underset{L>0}\sup\left\{L^{1/d}\int_\X\exp(-L\,\delta(x,\X^c)^d)\;\mathrm dQ(x)\right\},$$ then we have 
$\mathbb L_{\X_1\cup\X_2}\leq\mathbb L_{\X_1}+\mathbb L_{\X_2}<\infty$, and $\X_1\cup\X_2$ satisfies Condition
\ref{cond:regA}.
\end{enumerate}

\subsection{Proof of Proposition \ref{equivalence}}

Set $\delta(x)$ the distance between $x$ and the complement $\X^c$ of $\X$. Suppose first that 
$Q\left(A_{\epsilon}\right)=O(\epsilon)$. Note that $$Q\left(\X\cap\{\delta=0\}\right)=\underset{r\to\infty}\lim
Q\left(\X\cap\{\delta\leq 1/r\}\right)=0.$$
It is sufficient to show that $$\underset{L>0}\sup\,L^{1/d}\int_{\X\cap\{0<\delta\leq 1\}}\exp
\left(-L\delta(x)^d\right)\,\mathrm dQ(x)<\infty.$$
We have \begin{align*}
\fbox{1-1} & =L^{1/d}\int_{\X\cap\{0<\delta\leq 1\}}\exp\left(-L\delta(x)^d\right)\,\mathrm dQ(x) \\
& =L^{1/d}\sum_{r=0}^{\infty}\int_{\X\cap\{2^{-r-1}<\delta\leq 2^{-r}\}}\exp\left(-L\delta(x)^d
\right)\,\mathrm dQ(x) \\ & \leq L^{1/d}\sum_{r=0}^{\infty}Q\left(\left\{2^{-r-1}<\delta\leq 
2^{-r}\right\}\right)\exp\left(-L2^{-d(r+1)}\right) \\ & \leq CL^{1/d}\sum_{r=0}^{\infty}2^{-r}\exp\left(-L
2^{-d(r+1)}\right) \\ & \leq 4CL^{1/d} \sum_{r=0}^{\infty}\int_{2^{-r-2}}^{2^{-r-1}}\exp\left(-Ly^d\right)\,
\mathrm dy \\ & \leq 4C\int_0^\infty\exp\left(-y^d\right)\,\mathrm dy,
\end{align*}
which is finite. 

Conversely, suppose that condition \ref{cond:regA} is satisfied. Setting $L=\epsilon^{-d}$ for some $\epsilon>0$,
we have $$L^{1/d}\int_\X\exp\left(-L\delta(x)^d\right)\,\mathrm dQ(x)\geq L^{1/d}\exp\left(-L\epsilon^d
\right)Q(A_{\epsilon})=e^{-1}Q(A_{\epsilon})/\epsilon.$$
This shows that $Q(A_{\epsilon})=O(\epsilon)$ which is equivalent to the desired assertion.

\subsection{Proof of Theorem \ref{local_bias_control}}

Applying Taylor's formula to the regression function $g(x)=\E[h(X,Y)\mid X=x]$ yields $g(x)=\zeta(x,z)^\top G(x)+
R(x,z)$, where $G_j(x):=\frac{\partial_{\Psi(j)}g(x)}{\Psi(j)!}$ and $$R(x,z):=\frac 1{(l-1)!}\int_0^1(1-t)^{l-1}
(\nabla^lg(x+tz)-\nabla^lg(x))(z,\dots,z)\,\mathrm dt.$$
Then \begin{align*}
\hat\gamma & =\underset{\gamma\in\R^{K^*}}\argmin\sum_{i=1}^n[(G(x)-\gamma)^\top\zeta(x,X_i)+(h-g)(X_i,Y_i)+
R(x,X_i)]^2\1{\norm{X_i-x}\leq\ta(x)} \\ & =G(x)+M^{-1}(x)\sum_{i=1}^n(R(x,X_i)+(h-g)(X_i,Y_i))\zeta(x,X_i)
\,\1{\norm{X_i-x}\leq\ta(x)}. 
\end{align*}
Thus we have \begin{align*} 
B_n & =\int_\X(\E[e_1^\top\hat\gamma\mid X]-g(x))\,\mathrm dQ(x) \\ & =\int_\X\sum_{i=1}^nR(x,X_i)\,e_1^\top 
M^{-1}(x)\zeta(x,X_i)\1{\norm{X_i-x}\leq\ta(x)}\,\mathrm dQ(x), \\ B_n^2 & \leq\int_\X k\sum_{i=1}^nR(x,X_i)^2
\left(e_1^\top M^{-1}(x)\zeta(x,X_i)\1{\norm{X_i-x}\leq\ta(x)}\right)^2\,\mathrm dQ(x) \\ & \leq\frac{
\Lambda^2}{l!}k\int_\X\ta(x)^{2(l+\beta)}\,e_1^\top\left(\sum_{i=1}^nM^{-1}(x)\zeta(x,X_i)\zeta(x,X_i)^\top
M^{-1}(x)\1{\norm{X_i-x}\leq\ta(x)}\right)e_1\,\mathrm dQ(x) \\ & =\frac{\Lambda^2}{l!}k\int_\X\ta(x)^{2
(l+\beta)}\,e_1^\top M^{-1}(x)e_1\,\mathrm dQ(x) \\ \E[B_n^2] & \leq\frac{\Lambda^2}{l!}k\int_\X\E[\ta(x)^{2
(l+\beta)}\E[e_1^\top M^{-1}(x)e_1\mid\ta(x)]]\,\mathrm dQ(x) \\ & \leq C_\X\frac{\Lambda^2}{l!}\left(\frac kn
\right)^{\frac{2(l+\beta)}d},\quad\text{using the following key lemma}.
\end{align*}

\begin{lemma}\label{local_key_lemma}
Suppose that Assumptions \ref{cond:reg1} to \ref{cond:reg3} hold true, then there exists a constant $C_\X$ 
depending on $d,l$, and $P$ such that almost surely, for all $k\geq(2D+1)K^*+1$ and $Q$-almost all $x\in\X$, 
$$\E[e_1^\top M^{-1}(x)e_1\mid\ta(x)]\leq C_\X/k.$$
\end{lemma}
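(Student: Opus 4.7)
The plan is to reduce the problem to a scale-invariant form, express the $(1,1)$-entry of the inverse as the reciprocal of a Schur-complement infimum over a finite-dimensional polynomial space, and then combine a Remez-type polynomial lower bound with a matrix concentration argument.

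Conditionally on $\ta(x)=r>0$, set $U_j:=(X_{\hat i_j(x)}-x)/r\in\overline{B(0,1)}$ for $j=1,\dots,k$. Since $\zeta_j(x,X_{\hat i_j(x)})=r^{|\Psi(j)|}U_j^{\Psi(j)}$, we factor $M(x)=D_r\,\tilde M\,D_r$, with $D_r:=\operatorname{diag}\bigl(r^{|\Psi(j)|}\bigr)_{j=1}^{K^*}$ and $\tilde M:=\sum_{j=1}^k\tilde\zeta(U_j)\tilde\zeta(U_j)^\top$, where $\tilde\zeta(u):=(u^{\Psi(j)})_j$. The convention $\Psi(1)=(0,\dots,0)$ gives $(D_r)_{11}=1$ and $D_r^{-1}e_1=e_1$, so $e_1^\top M(x)^{-1}e_1=e_1^\top\tilde M^{-1}e_1$, independently of $r$. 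The variational form of the Schur complement then yields
\[
e_1^\top\tilde M^{-1}e_1=\frac 1S,\qquad S:=\inf\Bigl\{\sum_{j=1}^k P(U_j)^2:P\in\mathcal P_l,\ P(0)=1\Bigr\},
\]
where $\mathcal P_l$ is the (finite-dimensional) space of real polynomials on $\R^d$ of degree at most $l$. It therefore suffices to prove $\E[1/S\mid\ta(x)]\le C_\X/k$.

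Conditionally on $\ta(x)=r$, the $k-1$ interior rescaled neighbours are i.i.d.\ with density proportional to $p(x+r\,\cdot)$ on $\Omega_r:=\{u\in\overline{B(0,1)}:x+ru\in\X\}$; Assumption~\ref{cond:reg2} gives $|\Omega_r|\ge c\,\vab$ uniformly in $x$ and $r$, and Assumption~\ref{cond:reg3} bounds the density below by $\pinf$. A multivariate Remez (Brudny\u{\i}--Ganzburg) inequality, applied inside the finite-dimensional space $\mathcal P_l$, produces a constant $\lambda_*=\lambda_*(d,l,c,\pinf)>0$ such that the population matrix $\Sigma_r:=\E[\tilde\zeta(U)\tilde\zeta(U)^\top]$ has smallest eigenvalue at least $\lambda_*$ uniformly in $x,r$. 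Since $\|\tilde\zeta(U_j)\|$ is uniformly bounded by a constant $C(d,l)$, a matrix Chernoff inequality shows that for $k\ge(2D+1)K^*+1$ the event $\{\tilde M\succeq(k\lambda_*/2)I\}$ has probability at least $1-e^{-\alpha k}$ for some $\alpha>0$, on which $1/S\le 2/(\lambda_*k)$. On the complementary, exponentially small event, the same sample-size threshold guarantees---through a deterministic combinatorial argument along the lines of \citet{holzmann2024multivariate}---that $\tilde M$ remains invertible with at worst a polynomial-in-$k$ lower bound on $S$; its contribution to $\E[1/S\mid\ta(x)]$ is therefore $e^{-\alpha k}\cdot\operatorname{poly}(k)=o(1/k)$. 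Combining the two yields the claim.

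The main obstacle is the Remez step: as $r$ varies and $x$ approaches a re-entrant portion of $\partial\X$, the set $\Omega_r$ can degenerate in shape while only its Lebesgue measure is uniformly controlled by Assumption~\ref{cond:reg2}. The Brudny\u{\i}--Ganzburg inequality is precisely what translates this measure bound into a uniform lower bound on $L^2$-norms of all polynomials of degree at most $l$, thereby decoupling the probabilistic step from the precise geometry of $\X$ near its boundary.
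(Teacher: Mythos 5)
Your reduction is fine and coincides with the paper's starting point: rescaling by $\ta(x)$, using $\Psi(1)=(0,\dots,0)$ so that $e_1^\top M(x)^{-1}e_1=e_1^\top\tilde M^{-1}e_1=1/S$ with $S=\inf\{\sum_j P(U_j)^2:\deg P\le l,\ P(0)=1\}$, and the conditional i.i.d.\ structure of the rescaled interior neighbours (the paper's Lemma~\ref{density_V}). The treatment of the typical event is also plausible: with the density of $U$ bounded below on $\Omega_r$ (this uses \ref{cond:reg2}, \ref{cond:reg3} \emph{and} the upper bound $\psup$ on $p$, as in the paper), a Remez/Brudny\u{\i}--Ganzburg argument does give $\lambda_{\min}(\Sigma_r)\ge\lambda_*>0$ uniformly, and matrix Chernoff then yields $1/S\le 2/(\lambda_*k)$ outside an event of probability $e^{-\alpha k}$. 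That part is a legitimate, and arguably cleaner, alternative to the paper's handling of the typical case.

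The gap is in the last step. On the complementary event there is \emph{no} deterministic, polynomial-in-$k$ lower bound on $S$: even with $k\ge(2D+1)K^*+1$ points, the $U_j$ can lie arbitrarily close to the zero set of a degree-$l$ polynomial $P$ with $P(0)=1$, so $\tilde M$, while a.s.\ invertible, can have arbitrarily small smallest eigenvalue and $1/S$ is an unbounded random variable. Consequently $\E[(1/S)\1{\lambda_{\min}(\tilde M)\le k\lambda_*/2}\mid\ta(x)]$ cannot be bounded by $\Pb(\text{bad})\cdot\operatorname{poly}(k)$; you need a quantitative small-ball (anti-concentration) estimate for $S$ itself, and no ``deterministic combinatorial argument'' of this kind exists in \citet{holzmann2024multivariate} either --- their argument at this point is probabilistic. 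This is exactly the heart of the paper's proof: it shows $\Pb\mathopen{}\left(\inf_{\tilde P}\sum_{a=1}^{K^*}\tilde P(V_a)^2\le t^{-1}\mid\ta(x)\right)\le C\,t^{-1/(2D)}$ via the bounded conditional density of the rescaled neighbours and a root factorisation of $\det\Xi(w+s\theta)$ along rays; since the exponent $1/(2D)<1$ this tail alone is not integrable, the sample is split into $\nu=\floor{(k-1)/K^*}>2D$ independent blocks and the convolution bound of Lemma~\ref{anti_concentration} converts the weak per-block tail into an integrable one with the correct $1/k$ scaling. That is also the real role of the threshold $k\ge(2D+1)K^*+1$ (it forces $\nu/(2D)>1$), not an invertibility guarantee. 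To complete your route you would still have to supply such a small-ball bound for $S$ on the bad event --- essentially reproducing the paper's determinant/grouping machinery --- so as written the proposal does not prove the lemma.
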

\begin{proof} If $L=0$, then $M^{-1}(x)=1/k$, so now we suppose that $L\geq 1$. We follow the same proof structure
as in \citep{holzmann2024multivariate}. Fix $x\in\X$. Let $u$ be the first column of the random matrix $M^{-1}(x)$,
then $M(x)u=e_1$ and $e_1^\top M^{-1}(x)e_1=u_1=u^\top e_1=u^\top M(x)u$, so that
\begin{align*}
u_1 & =\sum_{i=1}^n\left(\sum_{j=1}^{K^*}u_j\,\zeta_j(x,X_i)\right)^2\1{\norm{X_i-x}\leq\ta(x)}, \\ 1/u_1 &
=\sum_{i=1}^n\left(\sum_{j=1}^{K^*}\frac{u_j}{u_1}\,\zeta_j(x,X_i)\right)^2\1{\norm{X_i-x}\leq\ta(x)} \\ &
\geq\underset{\tilde P\in P_1}\inf\sum_{i=1}^n\tilde P(x-X_i)^2\,\1{\norm{X_i-x}\leq\ta(x)} \\ & \geq\underset{
\tilde P\in P_1}\inf\sum_{i=1}^n\tilde P(x-X_i)^2\,\1{\norm{X_i-x}<\ta(x)},
\end{align*}
where we have set $P_1$ the set of polynomials in $d$ variables with total degree at most $L$ taking the value $1$ 
at $0$. Now write $\{i\in\ieu\mid\norm{X_i-x}<\ta(x)\}=\{i_1<\dots<i_{k-1}\}$ and for $a=1,\dots,k-1,\,V_a:=
\frac{x-X_{i_a}}{\ta(x)}$. It means that we perform a normalisation, changing $B(x,\ta(x))$ into $B(0,1)$. We can
therefore write \begin{align*}
1/u_1 & \geq\underset{\tilde P\in P_1}\inf\sum_{a=1}^{k-1}\tilde P(V_a)^2, \\ \E[u_1\mid\ta(x)] & \leq\int_{\R_+}
\Pb\left(\underset{\tilde P\in P_1}\inf\sum_{a=1}^{k-1}\tilde P(V_a)^2\leq t^{-1}\mid\ta(x)\right)\,\mathrm dt.   
\end{align*}

We want to divide the conditional sample $(V_a)_{a=1,\dots,k-1}$ into $\nu$ smaller samples all of size $K^*$. 
Let $\nu:=\floor{k-1/K^*}$, we can write \begin{align*}
\E[u_1\mid\ta(x)]\leq\int_{\R_+}\Pb\left(\sum_{j=0}^{\nu-1}\underset{\tilde P\in P_1}\inf\sum_{a=1}^{K^*}
\tilde P(V_{jK^*+a})^2\leq t^{-1}\mid\ta(x)\right)\,\mathrm dt.    
\end{align*}
We are now interested in bounding the quantity $$\Pb\left(\underset{\tilde P\in P_1}\inf\sum_{a=1}^{K^*}
\tilde P(V_a)^2\leq t^{-1}\mid\ta(x)\right).$$

As in \citet{holzmann2024multivariate}, for $w\in(\R^d)^{K^*}$, let $\Xi(w)$ be the matrix of size $K^*$ defined 
by $\Xi(w)_{a,j}:=w_a^{\Psi(j)}$. Then setting $V:=(V_a)_{a=1,\dots,K^*}\in(\R^d)^{K^*}$, we have \begin{align*}
\underset{\tilde P\in P_1}\inf\sum_{a=1}^{K^*}\tilde P(V_a)^2 & \geq\underset{\gamma\in\R^{K^*},\gamma_1=1}\inf
\norm{\Xi(V)\gamma}^2 \\ & \geq\underset{\gamma\in\R^{K^*},\norm\gamma\geq 1}\inf\norm{\Xi(V)\gamma}^2 \\ & 
\geq\la_{\min{}}(\Xi(V))^2 \\ & \geq\frac{\det(\Xi(V))^2}{\rho(\Xi(V))^{2(K^*-1)}} \\ & \geq\frac{\det(\Xi(V))^2}{
\norm{\Xi(V)}_F^{2(K^*-1)}},
\end{align*}
where $\la_{\min{}}$ denotes the smallest eigenvalue, $\rho$ the spectral radius and $\norm{\,\cdot\,}_F$ the
Frobenius norm. Thus, \begin{align*}
\norm{\Xi(V)}_F^2 & =\sum_{a,j=1}^{K^*}V_a^{2\Psi(j)} \\ & =\sum_{a=1}^{K^*}\sum_{l=0}^L\sum_{\underset{\va\la=l}{
\la\in\N^d}}V_a^{2\la} \\ & \leq\sum_{a=1}^{K^*}\sum_{l=0}^L\sum_{\underset{\va\la=l}{\la\in\N^d}}\binom l{\la_1,
\dots,\la_d}V_a^{2\la} \\ & =\sum_{a=1}^{K^*}\sum_{l=0}^L\norm{V_a}^{2l} \\ & \leq K^*(L+1).
\end{align*}

It yields that for all $t>0$, \begin{align*} 
\Pb\left(\underset{\tilde P\in P_1}\inf\sum_{a=1}^{K^*}\tilde P(V_a)^2\leq t^{-1}\mid\ta(x)\right) & \leq\Pb(\det(
\Xi(V))^2\leq\norm{\Xi(V)}_F^{2(K^*-1)}t^{-1}\mid\ta(x)) \\ & \leq\Pb(\va{\det(\Xi(V))}\leq t^{-1/2}(K^*(L+1))^{
(K^*-1)}\mid\ta(x)).
\end{align*}

Let $\varepsilon>0$, we are now interested in bounding the quantity $\Pb(\va{\det(\Xi(V))}\leq\varepsilon\mid
\ta(x))$. Let $\mu:=\underset{w\in B_d(0,1)^{K^*}}\max\va{\det\Xi(w)}>0$ and let $w$ be a point on which this
maximum is attained.
For $\theta\in S^{dK^*-1}$, we can apply the fundamental theorem of algebra to the polynomial $\det(\Xi(w+s\theta))
\in\C[s]$. Writing its roots $v_1^\theta,\dots,v_D^\theta$, we have $$\va{\det(\Xi(w+s\theta))}=\mu\prod_{i=1}^D
\va{1-s/v_i^\theta}.$$

If $s\ge 0$, then the distance between $s$ and the complex circle $(\va{\,\cdot\,}=\va{v_i^\theta})$ is reached 
at $\va{v_i^\theta}$ so that $$\va{\det(\Xi(w+s\theta))}\geq\mu\prod_{i=1}^D\va{1-\frac s{\va{v_i^\theta}}}.$$

Introduce $\Theta\in S^{dK^*-1}$ the direction of $[w,V]$, that is $\Theta_a=\frac{V_a-w_a}{\norm{V-w}},\,
a=1,\dots,K^*$ which is almost surely well-defined since $V$ has a density. We then have $V=w+\norm{V-w}\Theta$ 
and $$\va{\det(\Xi(V))}\geq\mu\prod_{i=1}^D\va{1-\frac{\norm{V-w}}{\va{v_i^\Theta}}}.$$

But $B_d(0,1)^{K^*}\subset B_{dK^*}(0,\sqrt{K^*})$, so that $\norm{V-w}\leq 2\sqrt{K^*}$ and $$\va{\det(\Xi(V))}
\geq\mu\prod_{i=1}^D\va{1-\frac{\norm{V-w}}{\va{v_i^\Theta}\wedge 2\sqrt{K^*}}},$$ which means we may replace
$\sva{v_i^\theta}$ by $\sva{v_i^\theta}\wedge 2\sqrt{K^*}$. Thus,\begin{align*} 
(\va{\det(\Xi(V))}\leq\varepsilon) & \subset\left(\prod_{i=1}^D\va{1-\frac{\norm{V-w}}{\va{v_i^\Theta}}}\leq
\varepsilon/\mu\right) \\ & \subset\bigcup_{i=1}^D\left(\va{1-\frac{\norm{V-w}}{\va{v_i^\Theta}}}\leq(\varepsilon/
\mu)^{1/D}\right) \\ & \subset\bigcup_{i=1}^D\left(1-(\varepsilon/\mu)^{1/D}\leq\frac{\norm{V-w}}{\va{v_i^\Theta}}
\leq 1+(\varepsilon/\mu)^{1/D}\right), \\ \Pb\left(\va{\det(\Xi(V))}\leq\epsilon\mid\ta(x)\right) & \leq\sum_{
i=1}^D\Pb(V-w\in\mathcal C_i(\varepsilon)\mid\ta(x)),
\end{align*}
where $$\mathcal C_i(\varepsilon):=\{r\theta:\,\theta\in S^{dK^*-1},\,0\vee(1-(\varepsilon/\mu)^{1/D})\sva{
v_i^\theta}\le r\le(1+(\varepsilon/\mu)^{1/D})\sva{v_i^\theta}\}\cap B_{dK^*}\left(0,2\sqrt{K^*}\right).$$

This is where the geometrical condition \ref{cond:reg2} plays a role. \citet{holzmann2024multivariate} worked
conditionally on the direction $\Theta$. It is justified by the fact that if we assume that the target support is
included in the interior of the source support $\X$, then there exists $\epsilon>0$ such that for all $z$ in the 
target support and all directions $\theta\in S^{d-1},[z-\epsilon\theta,z+\epsilon\theta]\subset\X$. This is not 
the case in our setting, which is why we have to let the direction free. It is only by averaging on all possible
directions with Condition \ref{cond:reg2} that we will get the same bound. First, we can upper bound the volume of
the set $\mathcal C_i(\varepsilon)$ by \begin{align*}
\va{\mathcal C_i(\varepsilon)} & =\int_0^{2\sqrt{K^*}}\left(\int_{S^{dK^*-1}}\1{r\in [(1-(\varepsilon/\mu)^{1/D})
\sva{v_i^\theta};(1+(\varepsilon/\mu)^{1/D})\sva{v_i^\theta}]}\,\mathrm d\s(\theta)\right)r^{dK^*-1}\,\mathrm dr 
\\ & \leq 2\left(2\sqrt{K^*}\right)^{dK^*-1}(\varepsilon/\mu)^{1/D}\int_{S^{dK^*-1}}\sva{v_i^\theta}\,
\mathrm d\sigma(\theta),    
\end{align*} 
with $\sva{v_i^\theta}\leq 2\sqrt{K^*}$. Therefore, using the conditional density from Lemma \ref{density_V}, we
obtain $$\Pb\left(\va{\det(\Xi(V))}\leq\varepsilon\mid\ta(x)\right)\leq\sum_{i=1}^D\va{\mathcal C_i(\varepsilon)}
\left(\frac{\psup\ta(x)^d}{\int_{\R^d}f^*(z)\,\mathrm dz}\right)^{K^*}\text{, where}$$ $$f^*(z):=\1{z\in B_d(0,1)}
\;p(x-z\ta(x))\,\ta(x)^d.$$ The geometric condition \ref{cond:reg2} now appears in the normalisation constant
$\int_{\R^d}f^*(z)\,\mathrm dz$. Using the lower bound from Lemma \ref{density_V}, we obtain $$\Pb\left(
\va{\det(\Xi(V))}\leq\varepsilon\mid\ta(x)\right)\leq C\,\varepsilon^{1/D},$$ where $C:=2D\left(\frac{(2
\sqrt{K^*})^d\psup}{c\pinf\vab}\right)^{K^*}\mu^{-1/D}\,\sigma(S^{dK^*-1})<\infty$.

We have proved that for all $t>0,\,\Pb\left(\underset{\tilde P\in P_1}\inf\sum_{a=1}^{K^*}\tilde P(V_a)^2\leq 
t^{-1}\mid\ta(x)\right)\leq C\,t^{-1/2D}$. Now recall that $\E[u_1\mid\ta(x)]\leq\int_{\R_+}\Pb\left(\sum_{j=0}^{
\nu-1}\underset{\tilde P\in P_1}\inf\sum_{a=1}^{K^*}\tilde P(V_{jK^*+a})^2\leq t^{-1}\mid\ta(x)\right)\,
\mathrm dt$, where $\nu=\floor{k-1/K^*}>2D$. To obtain the statement of Lemma \ref{local_key_lemma}, because we
work with variable $k$, we will need an anti-concentration result. Applying Lemma \ref{anti_concentration} with 
$Z_j:=\underset{\tilde P\in P_1}\inf\sum_{a=1}^{K^*}\tilde P(V_{jK^*+a})^2$ and $b:=1/2D$, we obtain, for all 
$C'>0$, $$\E[u_1\mid\ta(x)]\leq\frac{C'}\nu+\frac 1\nu\,\Gamma(b)\,C^\nu\,\frac{\Gamma(1+b)^{\nu-1}}{\Gamma(b\nu)}
\int_{C'/\nu}^\infty t^{-b\nu}\,\mathrm dt.$$
But since $b\nu>1$, \begin{align*}
C^\nu\,\frac{\Gamma(1+b)^\nu}{\Gamma(b\nu)}\int_{C'/\nu}^\infty t^{-b\nu}\,\mathrm dt & =\frac 1{b\nu-1}
\frac{(C\,\Gamma(1+b))^\nu}{\Gamma(b\nu)}\left(\frac\nu{C'}\right)^{b\nu-1} \\ & \leq\frac{C'C''\sqrt b}{(b\nu-1)
\sqrt{2\pi\nu}}\left(\frac{C\Gamma(1+b)e^bb^{-b}}{C'^b}\right)^\nu,
\end{align*}
using Stirling's approximation. If we choose $C'$ large enough, the above expression remains bounded as
$\nu\to\infty$. Since $\nu=\floor{k-1/K^*}$, we have shown that $\E[u_1\mid\ta(x)]\leq C_\X/k.$
\end{proof}

\bibliographystyle{plainnat} %
\bibliography{references}

\appendix

\begin{dftn}
Notation: \begin{itemize}
\item $P$ and $Q$ distributions on $\R^d$ with respective densities $p$ and $q$,
\item $\X$ a subset of $\R^d$ containing the supports of $P$ and $Q$,
\item $((X_1,Y_1),\dots,(X_n,Y_n))$ an $n$-sample of $P_{X,Y}=P_{Y\mid X}\cdot P$,
\item $((X_1^*,\cdot),\dots,(X_m^*,\cdot))$ an $m$-sample of $Q_{X,Y}=Q_{Y\mid X}\cdot Q$,
\item $h$ a measurable function from $\X\times\R$ to $\R$,
\item $g\colon x\mapsto\E[h(X,Y)\mid X=x]$,
\item $\1 A$ the indicator function of the event $A$,
\item $\norm\cdot$ the Euclidean norm on $\R^d$,
\item $\ik$ the index of the $k$th-nearest neighbour to $x$ among $(X_1,\dots,X_n)$,
\item $\ta(x)$ the $k$th-order statistic of $(\norm{X_i-x})_{i=1,\dots,n}$,
\item $A_k(z):=\{x\in\R^d\,:\,\norm{z-x}\leq\ta(x)\}$ the catchment area of a point $x$,
\item $M_k^*(x):=\sum_{j=1}^m\1{\norm{x-X_j^*}\leq\ta(X_j^*)}$ the matching estimator,
\item $\ieu$ the integer interval $i=1,\dots,n$,
\item $V^d$ the $\norm\cdot$-unit ball in $\R^d$,
\item $B(x,r)$ the closed ball for $\norm\cdot$ with center $x$ and radius $r$,
\item $S(x,r)$ the sphere for $\norm\cdot$ with center $x$ and radius $r$,
\item $S^{d-1}$ the $\norm\cdot$-unit sphere in $\R^d$,
\item $\s$ the surface measure on $S^{d-1}$,
\item $\va B$ the Lebesgue measure of a Borel set $B$ in $\R^d$,
\item $\diam(\X):=\sup\{\norm{y-x}\,:\,x,y\in\X\}$,
\item $\Gamma\colon s>0\mapsto\int_{\R_+}t^{s-1}e^{-t}\,\mathrm dt$ Euler's gamma function,
\item $\beta\colon(s,t)>0\mapsto\int_0^1u^{s-1}(1-u)^{t-1}\,\mathrm du=\frac{\Gamma(s)\Gamma(t)}{\Gamma(s+t)}$
Euler's beta function.
\end{itemize}
\end{dftn}

\bigskip

\begin{lemma}\label{order_statistics}
Let $Z_1,\dots,Z_n$ be iid real-valued random variables having density $f_Z$ with respect to the Lebesgue measure.
\begin{itemize}
\item Almost surely, there is a unique permutation $\hat\pi$ of $\ieu$ such that $Z_{\hat\pi(1)}<\dots<Z_{
\hat\pi(n)}$. From now on, we write $Z_{(i)}$ for $Z_{\hat\pi(i)}$.
\item The random variable $\hat\pi$ is uniformly distributed over the set of permutations of $\ieu$.
\item The random vector $(Z_{(1)},\dots,Z_{(n)})$ has density $$f(x_1,\dots,x_n)=n!\,f_Z(x_1)\dots f_Z(x_n)\,
\1{x_1<\dots<x_n}$$ with respect to the Lebesgue measure on $\R^n$.
\item The vector $(Z_{(1)},\dots,Z_{(n)})$ and the permutation $\hat\pi$ are independent.
\end{itemize} 
\end{lemma}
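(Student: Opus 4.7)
The plan is to treat the four items in order, since each subsequent statement builds on the previous one, and to exploit the exchangeability of $(Z_1,\ldots,Z_n)$ together with the absolute continuity of the common law.

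First, for the existence and uniqueness of $\hat\pi$, I would invoke absolute continuity: for $i\neq j$, Tonelli's theorem gives
\[
\Pb(Z_i=Z_j)=\int_{\R}\left(\int_{\R}\1{z=z'}\,f_Z(z')\,\mathrm dz'\right)f_Z(z)\,\mathrm dz=0,
\]
so a union bound over the finitely many pairs yields $\Pb(\exists i\neq j:\,Z_i=Z_j)=0$. On the complementary event, the values $Z_1,\ldots,Z_n$ are pairwise distinct and hence admit a unique strict rearrangement, which defines $\hat\pi$. This also makes the event $\{\hat\pi=\pi\}$ well-defined for each permutation $\pi$ of $\ieu$.

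Second, for the uniform distribution of $\hat\pi$, I would use exchangeability. Since the $Z_i$'s are i.i.d., the law of $(Z_1,\ldots,Z_n)$ is invariant under any permutation of coordinates, so for any permutation $\pi$,
\[
\Pb(\hat\pi=\pi)=\Pb(Z_{\pi(1)}<\cdots<Z_{\pi(n)})=\Pb(Z_1<\cdots<Z_n).
\]
Since the $n!$ events $\{\hat\pi=\pi\}$ partition (up to a null set) the sample space, each has probability $1/n!$.

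Third, to obtain the density of $(Z_{(1)},\ldots,Z_{(n)})$, I would decompose over $\hat\pi$ and change variables. For any bounded measurable $\phi\colon\R^n\to\R$,
\[
\E[\phi(Z_{(1)},\ldots,Z_{(n)})]=\sum_{\pi}\int_{\R^n}\phi(z_{\pi(1)},\ldots,z_{\pi(n)})\,\1{z_{\pi(1)}<\cdots<z_{\pi(n)}}\prod_{i=1}^n f_Z(z_i)\,\mathrm dz,
\]
and in each summand the change of variables $x_i:=z_{\pi(i)}$ (whose Jacobian has absolute value $1$) produces the same integral, giving the stated density $n!\,f_Z(x_1)\cdots f_Z(x_n)\,\1{x_1<\cdots<x_n}$.

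Finally, for independence of the order statistic vector and $\hat\pi$, the same change of variables applied to any Borel set $A\subset\R^n$ yields
\[
\Pb\bigl((Z_{(1)},\ldots,Z_{(n)})\in A,\,\hat\pi=\pi\bigr)=\int_{\R^n}\1_A(x_1,\ldots,x_n)\1{x_1<\cdots<x_n}\prod_{i=1}^n f_Z(x_i)\,\mathrm dx,
\]
which is independent of $\pi$; summing over $\pi$ and dividing by $n!$ factorizes the joint probability as the product $\Pb(\hat\pi=\pi)\,\Pb((Z_{(1)},\ldots,Z_{(n)})\in A)$, which is the desired independence. There is no real obstacle: the only subtle step is justifying that ties occur with probability zero so that $\hat\pi$ is an almost-surely well-defined random permutation, after which everything is a bookkeeping exercise using exchangeability and change of variables.
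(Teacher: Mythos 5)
Your proposal is correct and follows essentially the same route as the paper: ties are ruled out by absolute continuity, and the key computation is that $\Pb\bigl((Z_{(1)},\dots,Z_{(n)})\in A,\,\hat\pi=\pi\bigr)$ equals an integral not depending on $\pi$ (the paper obtains this via the distributional identity $(Z_{\pi(1)},\dots,Z_{\pi(n)})\overset{d}{=}(Z_1,\dots,Z_n)$, you via the equivalent explicit change of variables), from which uniformity of $\hat\pi$, the density of the order statistics, and independence all follow. No gaps.
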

\begin{proof}
For the first item, recall that because the variables $Z_i$ have density, the event of having two observations 
with the same value has probability zero. We will prove the last three items at once. Let $\varphi\colon\R\to\R$ 
be a bounded and measurable function and let $\pi$ be a permutation of $\ieu$, then \begin{align*}
\E[\varphi(Z_{(1)},\dots,Z_{(n)})\1{\hat\pi=\pi}] & =\E[\varphi(Z_{\pi(1)},\dots,Z_{\pi(n)})\,\1{Z_{\pi(1)}<\dots<
Z_{\pi(n)}}] \\ & =\int_{\R^n}\varphi(x)\1{x_1<\dots<x_n}f_Z(x_1)\dots f_Z(x_n)\,\mathrm dx,
\end{align*}
since the vector $(Z_{\pi(1)},\dots,Z_{\pi(n)})$ has the same distribution as the vector $(Z_1,\dots,Z_n)$. 
Because the last expression above does not depend on the permutation $\pi$, we deduce that $\hat\pi$ follows a
uniform distribution on the set of permutations of $\ieu$, that is $\Pb(\hat\pi=\pi)=\frac 1{n!}$, and the last 
two points are valid.
\end{proof}

\begin{lemma}\label{beta}
Define Euler's beta function $\beta\colon(s,t)>0\mapsto\int_0^1u^{s-1}(1-u)^{t-1}\,\mathrm du=\frac{\Gamma(s)
\Gamma(t)}{\Gamma(s+t)}$. Let $U_1^n,\dots,U_n^n$ be an $n$-sample of uniformly distributed random variables over
$[0,1]$ and let $1\le k\le n$, then the $k$th-order statistic of this sample has density with respect to the 
Lebesgue measure on $[0,1]$, $$u\mapsto\frac 1{\beta(k,n-k+1)}\,u^{k-1}(1-u)^{n-k}.$$

Let $I(x;s,t):=\frac 1{\beta(s,t)}\,\int_0^xu^{s-1}(1-u)^{t-1}\,\mathrm du$ be the regularised incomplete beta
function. Then for all $x\in[0,1]$, we have $$I(x;k,n-k+1)=\Pb(U_{(k)}^n\leq x)=\Pb(B_{n,x}\geq k),$$
where $B_{n,x}$ is a random variable following the binomial distribution with parameters $n$ and $x$.
\end{lemma}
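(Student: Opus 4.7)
The plan is to derive both assertions from Lemma \ref{order_statistics} combined with a short combinatorial observation. First, for the density of $U_{(k)}^n$, I would start from the joint density of all order statistics given in Lemma \ref{order_statistics}, namely $f(u_1,\dots,u_n) = n!\,\1{0<u_1<\dots<u_n<1}$, and then marginalise out every coordinate except the $k$-th. The $k-1$ left coordinates contribute $\int_{0<u_1<\dots<u_{k-1}<u} du_1\cdots du_{k-1} = u^{k-1}/(k-1)!$, while the $n-k$ right coordinates contribute $(1-u)^{n-k}/(n-k)!$ by a symmetric computation on the simplex $\{u<u_{k+1}<\dots<u_n<1\}$. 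Multiplying by $n!$ gives the claimed density, since $n!/[(k-1)!(n-k)!] = 1/\beta(k,n-k+1)$ by the standard identity $\beta(s,t)=\Gamma(s)\Gamma(t)/\Gamma(s+t)$ and $\Gamma(k)=(k-1)!$.

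Second, for the identity $\Pb(U_{(k)}^n \leq x) = \Pb(B_{n,x}\geq k)$, the key observation is that
\[
\{U_{(k)}^n \leq x\} = \left\{\Card\{i\in\ieu : U_i^n \leq x\} \geq k\right\},
\]
because the $k$-th smallest value lies below $x$ if and only if at least $k$ of the $n$ observations lie in $[0,x]$. The indicators $\1{U_i^n\leq x}$ are i.i.d.\ Bernoulli with parameter $x$, so their sum has the Binomial$(n,x)$ distribution, and the identity follows immediately.

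Finally, $I(x;k,n-k+1) = \Pb(U_{(k)}^n \leq x)$ is just the statement that the CDF of $U_{(k)}^n$ at $x$ is obtained by integrating the density of the first part from $0$ to $x$, which matches the definition of the regularised incomplete beta function $I(x;k,n-k+1) = \frac{1}{\beta(k,n-k+1)}\int_0^x u^{k-1}(1-u)^{n-k}\,\mathrm du$. There is no real obstacle here: this is a classical computation, and the only care needed is to correctly identify the two simplex volumes in the marginalisation step. The combinatorial identity between the binomial tail and the incomplete beta function is a byproduct of these two computations applied to the same event.
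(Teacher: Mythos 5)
Your proposal is correct and follows essentially the same route as the paper's own proof: both derive the density of $U_{(k)}^n$ by marginalising the joint order-statistics density from Lemma~\ref{order_statistics} (the two simplex volumes $u^{k-1}/(k-1)!$ and $(1-u)^{n-k}/(n-k)!$), obtain $I(x;k,n-k+1)=\Pb(U_{(k)}^n\leq x)$ by integrating, and get the binomial identity from the observation that $\{U_{(k)}^n\leq x\}$ coincides with the event that at least $k$ of the $U_i^n$ fall in $[0,x]$. No gaps.
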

\begin{proof}
Setting $x^{(\bar k)}:=x_1,\dots,x_{k-1},x_{k+1},\dots,x_n$, we know from Lemma \ref{order_statistics} that the
density of the $k$th-order statistic of $(U_i)_{1\le i\le n}$ is equal to \begin{align*}
f_{U_{(k)}^n}(u) & =\int_{[0,1]^{n-1}}n!\,\1{x_1<\dots<x_{k-1}<u<x_{k+1}<\dots<x_n}\,\mathrm dx^{(\bar k)} \\ &
=n!\,\frac{u^{k-1}}{(k-1)!}\,\frac{(1-u)^{n-k}}{(n-k)!} \\ & =\frac 1{\beta(k,n-k+1)}\,u^{k-1}(1-u)^{n-k}.
\end{align*}    
From this, we can deduce the first equality $$I(x;k,n-k+1)=\Pb(U_{(k)}^n\leq x).$$ Observing that the events
$(U_{(k)}^n\leq x)$ and $\left(\sum_{i=1}^n\1{U_i^n\leq x}\geq k\right)$ are the same gives the second equality.
\end{proof}

\begin{lemma}\label{moments_tau}
Suppose that Assumptions \ref{cond:reg1} to \ref{cond:reg3} hold true. Let $\Gamma\colon s>0\mapsto\int_0^\infty
t^{s-1}e^{-s}\,\mathrm dt$ denote Euler's gamma function and let $\floor\cdot$ denote the floor function. Let
$\la\ge 0$, then for all $x\in\X$, we have $$\E[\ta(x)^\la]\leq C_{\la,d,P}\,\left(\frac k{n+1}\right)^{\la/d},$$
where $C_{\la,d,P}:=2\,\Gamma(2+\floor{\la/d})\,(c\pinf\,\vab)^{-\la/d}$.
\end{lemma}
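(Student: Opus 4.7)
The plan is to reduce the moment $\E[\ta(x)^\la]$ to a moment of a uniform order statistic, for which a closed-form expression in terms of beta functions is available, and then to estimate the resulting ratio of gammas. First I introduce the CDF $F_x(r):=P(B(x,r))$ of the random distance $\norm{X_1-x}$. Under \ref{cond:reg3}, $F_x$ is continuous as the integral of the density $p$ over a ball whose Lebesgue measure varies continuously in $r$. Combining \ref{cond:reg2} and \ref{cond:reg3} gives the pointwise lower bound
\begin{equation*}
F_x(r)\;\ge\;\pinf\,\sva{B(x,r)\cap\X}\;\ge\;c\,\pinf\,\vab\,r^d\qquad\text{for all }0\le r\le\diam(\X).
\end{equation*}

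Next I apply Lemma~\ref{order_statistics} to the iid real random variables $Z_i:=\norm{X_i-x}$, whose common CDF is the continuous function $F_x$. The probability integral transform implicit in that lemma shows that $F_x(\ta(x))$ has the same law as $U_{(k)}$, the $k$th order statistic of $n$ iid uniforms on $[0,1]$. Since $P(\X)=1$ ensures $\ta(x)\le\diam(\X)$ almost surely, the pointwise lower bound on $F_x$ applies pathwise with $r=\ta(x)$, so for any non-decreasing $\phi\ge 0$, $\E[\phi(c\pinf\vab\,\ta(x)^d)]\le\E[\phi(F_x(\ta(x)))]=\E[\phi(U_{(k)})]$. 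Specialising to $\phi(t)=t^{\la/d}$ yields
\begin{equation*}
\E[\ta(x)^\la]\;\le\;(c\,\pinf\,\vab)^{-\la/d}\,\E[U_{(k)}^{\la/d}].
\end{equation*}

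The moment of $U_{(k)}$ is then computed from its density (Lemma~\ref{beta}):
\begin{equation*}
\E[U_{(k)}^{\la/d}]\;=\;\frac{\beta(k+\la/d,\,n-k+1)}{\beta(k,\,n-k+1)}\;=\;\frac{\Gamma(k+\la/d)\,\Gamma(n+1)}{\Gamma(k)\,\Gamma(n+1+\la/d)}.
\end{equation*}
To express this ratio in the form $(k/(n+1))^{\la/d}$ times an explicit constant, I reduce to an integer moment via H\"older's inequality. With $m:=\floor{\la/d}+1$, the facts that $U_{(k)}\in[0,1]$ and $\la/d\le m$ give $\E[U_{(k)}^{\la/d}]\le\E[U_{(k)}^m]^{(\la/d)/m}$. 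The integer moment telescopes,
\begin{equation*}
\E[U_{(k)}^m]\;=\;\prod_{i=0}^{m-1}\frac{k+i}{n+i+1}\;\le\;(n+1)^{-m}\prod_{i=0}^{m-1}(k+i),
\end{equation*}
and the elementary inequality $k+i\le(i+1)k$ (valid for $k\ge 1$) majorises the last product by $m!\,k^m=\Gamma(m+1)\,k^m$, delivering the announced convergence rate.

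The main obstacle is not conceptual but accounting: the H\"older reduction loses a factor $(m!)^{(\la/d)/m}\le\Gamma(\floor{\la/d}+2)$ and the bounds $k+i\le(i+1)k$ and $\sva{B(x,r)\cap\X}\ge c\vab\,r^d$ leave additional slack, which together are absorbed by the extra factor $2$ present in the stated prefactor $2\,\Gamma(2+\floor{\la/d})$. The case $\la=0$ is trivial, and the case $\floor{\la/d}=0$ (where $m=1$) reduces to the one-line computation $\E[U_{(k)}]=k/(n+1)$, anchoring the estimate.
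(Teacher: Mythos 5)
Your proof is correct and follows essentially the same route as the paper: both reduce $\E[\ta(x)^\la]$ to the $\la/d$-moment of the $k$th uniform order statistic via the lower bound $F_x(r)\ge c\,\pinf\,\vab\,r^d$ (you push $\ta(x)$ forward through $F_x$, the paper pulls $U_{(k)}$ back through the generalised inverse $F_x^{-1}$, which is equivalent), arriving at the same beta ratio $\beta(k+\la/d,n-k+1)/\beta(k,n-k+1)$. The only real difference is the final estimate: the paper invokes the gamma-ratio inequalities $\Gamma(a+s)/\Gamma(a)\le\Gamma(2+\floor{s})\,a^s$ and $\Gamma(a)/\Gamma(a+s)\le 2\,a^{-s}$ cited from \citet{portier2023scalable}, whereas you handle it self-containedly with Lyapunov's inequality and the telescoping integer moment, which in fact gives a slightly smaller constant, comfortably absorbed by the factor $2$ in $C_{\la,d,P}$.
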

\begin{proof}
The proof can be found in \citet{portier2023scalable}. We give a proof below for the sake of completeness.

Let $F_x(r):=\Pb(X\in B(x,r)),\,F_x^{-1}(u):=\inf\{r\in\R\,:\,F_x(r)\geq u\}$ its generalised inverse, and let 
$(U_1,\dots,U_n)$ be an $n$-sample of the uniform distribution on $[0,1]$. Then the two random vectors
$(\norm{X_1-x},\dots,\norm{X_n-x})$ and $(F_x^{-1}(U_1),\dots,F_x^{-1}(U_n))$ have the same distribution. 
Because the function $F_x^{-1}$ is non-decreasing, the two vectors $(\hat\tau_1(x),\dots,\hat\tau_n(x))$ and 
$(F_x^{-1}(U_{(1)}),\dots,F_x^{-1}(U_{(n)}))$ also have the same distribution. In particular, by Lemma 
\ref{order_statistics}, we can write $$\E[\ta(x)^\la]=\frac 1{\beta(k,n-k+1)}\int_0^1(F_x^{-1}(u))^\la\,u^{k-1}
(1-u)^{n-k}\,\mathrm du.$$

However, using Assumptions \ref{cond:reg2} and \ref{cond:reg3}, we know that for all $0\le r\le\diam(\X)$, we have 
$F_x(r)\geq c\pinf\,\vab\,r^d$, so that $F_x^{-1}(u)\leq\left(\frac u{c\pinf\,\vab}\right)^{1/d}$, which also 
holds for $u>c\pinf\,\vab\,\diam(\X)^d$. It means that \begin{align*}
\E[\ta(x)^\la] & \leq\frac 1{\beta(k,n-k+1)}(c\pinf\,\vab)^{-\la/d}\beta(k+\la/d,n-k+1) \\ & \leq(c\pinf\,\vab)^{
-\la/d}\frac{\Gamma(n+1)}{\Gamma(n+\la/d+1)}\frac{\Gamma(k+\la/d)}{\Gamma(k)} \\ & \leq (c\pinf\,\vab)^{
-\la/d}\frac 2{(n+1)^{\la/d}}\Gamma(2+\floor{\la/d})k^{\la/d},
\end{align*}
using the inequalities $\frac{\Gamma(a+s)}{\Gamma(a)}\leq\Gamma(2+\floor s)\,a^s$ and $\frac{\Gamma(a)}{
\Gamma(a+s)}\leq 2\,a^{-s}$, as shown in \citet{portier2023scalable}.    
\end{proof}

\begin{lemma}\label{tail_tau}
Suppose that Assumptions \ref{cond:reg1} to \ref{cond:reg3} hold true. Let $a>0$, then for all $x\in\X$,
$$\Pb(\ta(x)>a)\leq e^{1/4}\exp\left(-\frac nk\,\frac{c\pinf\,\vab}8\,a^d\right).$$
\end{lemma}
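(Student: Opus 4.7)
The plan is to reduce the inequality to a binomial lower-tail bound and then apply a Chernoff inequality with a carefully tuned tilt parameter. The event $\{\ta(x) > a\}$ occurs exactly when fewer than $k$ of the observations $X_1,\dots,X_n$ fall in $B(x,a)$. Hence, setting $B:=\sum_{i=1}^n\1{\norm{X_i-x}\leq a}$ and $p:=P(B(x,a)\cap\X)$, the variable $B$ follows a Binomial$(n,p)$ distribution and $\Pb(\ta(x)>a)=\Pb(B\leq k-1)$. The case $a>\diam(\X)$ is trivial since the probability vanishes, so I assume $a\leq\diam(\X)$; Assumptions \ref{cond:reg2}--\ref{cond:reg3} then yield $p\geq\alpha$ with $\alpha:=c\pinf\vab a^d$, which is the quantity that must appear in the exponent of the target bound.

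Next I would apply the exponential Markov inequality: for any $t>0$,
\[
\Pb(B\leq k-1)\leq e^{t(k-1)}\E[e^{-tB}]=e^{t(k-1)}(1-p(1-e^{-t}))^n\leq\exp\bigl(t(k-1)-np(1-e^{-t})\bigr),
\]
using $1-u\leq e^{-u}$. Choosing $t:=1/(4k)$ ensures $e^{t(k-1)}\leq e^{tk}=e^{1/4}$, matching the prefactor in the statement. Combined with the elementary Taylor bound $1-e^{-t}\geq t(1-t/2)$ (valid for all $t\geq 0$), which for $t=1/(4k)\leq 1/4$ gives $1-e^{-t}\geq 7/(32k)$, and with $p\geq\alpha$, the exponent satisfies $np(1-e^{-t})\geq 7n\alpha/(32k)\geq n\alpha/(8k)$, so
\[
\Pb(\ta(x)>a)\leq e^{1/4}\exp\!\left(-\frac{n\alpha}{8k}\right)=e^{1/4}\exp\!\left(-\frac{n}{k}\,\frac{c\pinf\vab}{8}\,a^d\right),
\]
which is the desired inequality.

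There is no substantive obstacle; the only small subtlety is the choice $t=1/(4k)$, calibrated so that the two pieces of the Chernoff exponent split into exactly the constants $e^{1/4}$ and $1/8$ prescribed in the statement. An alternative dichotomy proof---handling $n\alpha\leq 2k$ trivially (in that regime, $e^{1/4}\exp(-n\alpha/(8k))\geq 1$) and using the standard multiplicative Chernoff bound $\exp(-n\alpha/8)$ when $n\alpha>2k$---would also work, and in fact yields a slightly tighter constant, but the unified tilting argument above produces the $e^{1/4}$ prefactor directly and is more elegant.
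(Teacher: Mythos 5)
Your proof is correct. The high-level reduction is the same as the paper's: both identify $\{\ta(x)>a\}$ with the lower-tail event $\{B\le k-1\}$ for $B\sim\mathrm{Bin}(n,F_x(a))$ and then feed in the lower bound $F_x(a)\ge c\pinf\,\vab\,a^d$ coming from Assumptions \ref{cond:reg2}--\ref{cond:reg3} (the paper routes this identification through uniform order statistics and the incomplete beta function of Lemma~\ref{beta}, whereas you state the counting identity directly, which is a harmless shortcut). Where you differ is in the binomial tail estimate itself: the paper invokes the multiplicative Chernoff bound $\Pb(B\le(1-\delta)\E[B])\le\exp(-\delta^2\E[B]/2)$ with $\delta=1/2$ and then splits into the cases $k\le\E[B]/2$ and $k\ge\E[B]/2$, the second case being handled by the trivial bound $1\le e^{1/4}\exp(-\E[B]/(8k))$ --- this is exactly the ``alternative dichotomy'' you mention at the end. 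Your unified tilting argument with $t=1/(4k)$, together with $1-e^{-t}\ge t(1-t/2)$ and $e^{t(k-1)}\le e^{1/4}$, delivers the same constants in one pass without the case split and without citing an external concentration inequality; the paper's version is slightly shorter on computation because it reuses a standard quoted bound, and its dichotomy step is recycled later in the proof of Lemma~\ref{censored_tau}. All steps you use check out, including the estimate $np(1-e^{-t})\ge 7n\alpha/(32k)\ge n\alpha/(8k)$ and the trivial case $a>\diam(\X)$.
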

\begin{proof}
Let $(U_1^n,\dots,U_n^n)$ be an $n$-sample of the uniform distribution on $[0,1]$, then applying Lemma \ref{beta},
we know that $$\Pb(\ta(x)>a)=\Pb(U_{(k)}^n>F_x(a))=\Pb(B\leq k-1),$$ where $B$ has a binomial distribution with
parameters $n$ and $F_x(a)$, with $F_x(a)$ being defined in the proof of Lemma \ref{moments_tau}.
We know from concentration results on binomial random variables, as can be seen in 
\citet{boucheron2013concentration}, that for all $\delta\in(0,1)$, we have the Chernoff bound 
$$\Pb(B\leq(1-\delta)\E[B])\leq\exp\left(-\delta^2\,\frac{\E[B]}2\right).$$
With $\delta:=1/2$, we find $\Pb(B\leq\E[B]/2)\leq\exp(-\frac{\E[B]}8)$. It means that if $t\leq\E[B]/2$, then we
can write $\Pb(B\leq t)\leq\exp(-\frac{\E[B]}8)$, while if $t\geq\E[B]/2$, we simply write $\Pb(B\leq t)\leq 1\leq
e^{1/4}\exp(-\frac{\E[B]}{8t})$.
In either case, we showed that for all $t\ge 1$, $$\Pb(B\leq t)\leq e^{1/4}\exp\left(-\frac{\E[B]}{8t}\right).$$
In our context, it means that \begin{align*}
\Pb(B\leq k-1) & \leq\Pb(B\leq k) \\ & \leq e^{1/4}\exp\left(-\frac n{8k}\,F_x(a)\right) \\ & \leq e^{1/4}\exp
\left(-\frac nk\,\frac{c\pinf\,\vab}8\,a^d\right).   
\end{align*}
\end{proof}

\begin{lemma}\label{censored_tau}
Suppose that Assumptions \ref{cond:reg1} to \ref{cond:reg3} hold true. Let $\la\ge 0$ and $a>0$, then for all
$x\in\X$, $$\E[\ta(x)^\la\,\1{\ta(x)>a}]\leq C_{\la,d,P}\,\left(\frac k{n+1}\right)^{\la/d}\exp\left(-\frac{n+1}k
\,L_{\la,d,P}\,a^d\right),$$ where $C_{\la,d,P}:=2e^{1/4}\,\Gamma(2+\floor{\la/d})\,(c\pinf\,\vab)^{-\la/d}$ and
$L_{\la,d,P}:=\frac{c\pinf\,\vab}{8(1+\floor{\la/d})}$.
\end{lemma}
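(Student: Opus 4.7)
My plan is to adapt the argument of Lemma \ref{moments_tau}, incorporating the restriction $\{\ta(x) > a\}$ via the layer-cake formula together with the tail bound of Lemma \ref{tail_tau}.

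First I would apply Fubini's theorem to write
\[
\E[\ta(x)^\lambda \1{\ta(x) > a}] = a^\lambda \Pb(\ta(x) > a) + \int_a^\infty \lambda t^{\lambda - 1} \Pb(\ta(x) > t)\, \mathrm dt,
\]
and then use Lemma \ref{tail_tau} to bound $\Pb(\ta(x) > t) \leq e^{1/4} \exp(-\alpha t^d)$ for all $t \ge 0$, where $\alpha := \frac{n}{k}\,\frac{c\pinf\vab}{8}$. The change of variables $s = \alpha t^d$ reduces the integral to an upper incomplete gamma integral:
\[
\int_a^\infty \lambda t^{\lambda-1} e^{-\alpha t^d}\, \mathrm dt = \frac{\lambda}{d}\,\alpha^{-\lambda/d} \int_{\alpha a^d}^{\infty} s^{\lambda/d - 1}\,e^{-s}\, \mathrm ds,
\]
while the boundary term becomes $a^\lambda e^{-\alpha a^d} = \alpha^{-\lambda/d}(\alpha a^d)^{\lambda/d} e^{-\alpha a^d}$.

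Next I would extract exponential decay in $a^d$ via the elementary splitting: for $s \geq u$ and $\theta \in (0,1)$, $e^{-s} = e^{-s\theta} e^{-s(1-\theta)} \leq e^{-u\theta}\, e^{-s(1-\theta)}$, which after integration gives $\int_u^\infty s^{\lambda/d - 1} e^{-s}\, \mathrm ds \leq \Gamma(\lambda/d)\,(1-\theta)^{-\lambda/d} e^{-u\theta}$. Similarly, $(\alpha a^d)^{\lambda/d} e^{-\alpha a^d} \leq (\lambda/(d(1-\theta)))^{\lambda/d} e^{-\lambda/d}\, e^{-\theta \alpha a^d}$ by maximising $y^{\lambda/d} e^{-y(1-\theta)}$. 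Choosing $\theta$ of order $1/(1 + \floor{\lambda/d})$ matches the prescribed rate $L_{\lambda,d,P}$ in the exponent, and the identity $\frac{\lambda}{d}\Gamma(\lambda/d) = \Gamma(\lambda/d + 1) \leq \Gamma(2 + \floor{\lambda/d})$ together with $\alpha^{-\lambda/d} = 8^{\lambda/d}(c\pinf\vab)^{-\lambda/d}(k/n)^{\lambda/d}$ produces the prefactor $C_{\lambda,d,P}$ after absorbing the factor $(n+1)/n$ into the constant.

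Alternatively, and probably cleanest for matching the exact constant $\Gamma(2+\floor{\lambda/d})$, I could mimic the proof of Lemma \ref{moments_tau} directly via the beta representation: from $\ta(x) \stackrel{d}{=} F_x^{-1}(U_{(k)}^n)$ and $F_x^{-1}(u) \leq (u/(c\pinf\vab))^{1/d}$,
\[
\E[\ta(x)^\lambda \1{\ta(x) > a}] \leq \frac{(c\pinf\vab)^{-\lambda/d}}{\beta(k,n-k+1)} \int_{F_x(a)}^1 u^{k+\lambda/d - 1}(1-u)^{n-k}\, \mathrm du,
\]
and then bound the truncated beta integral with $(1-u)^{n-k} \leq e^{-(n-k)u}$, reducing again to an upper incomplete gamma integral to which the above splitting argument applies. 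The factor $\Gamma(2 + \floor{\lambda/d})$ then emerges from the ratio $\Gamma(k + \lambda/d)/\Gamma(k) \leq \Gamma(2 + \floor{\lambda/d})\, k^{\lambda/d}$ already used in Lemma \ref{moments_tau}.

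The main obstacle is the careful bookkeeping of constants, specifically reconciling the $n/k$ rate in Lemma \ref{tail_tau}'s exponent with the claimed $(n+1)/k$ rate in Lemma \ref{censored_tau}, and picking $\theta$ so that both the boundary term and the integral contribute compatible multiplicative constants. The choice $\theta = 1/(1 + \floor{\lambda/d})$ is the natural one, and the slight discrepancy between $n$ and $n+1$ can be absorbed into the overall constant by the inequality $(n+1)/n \leq 2$, consistent with the factor $2$ already appearing in $C_{\lambda,d,P}$.
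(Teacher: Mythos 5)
There is a genuine gap here: both of your routes discard the exact beta/binomial structure in favour of a lossy exponential bound (the Chernoff tail of Lemma~\ref{tail_tau} in the layer-cake route, the substitution $(1-u)^{n-k}\le e^{-(n-k)u}$ in the beta route) and then try to recover the exponential factor by the splitting $e^{-s}\le e^{-\theta u}e^{-(1-\theta)s}$, and this cannot deliver the stated inequality. In the layer-cake route, when $0<\la<d$ your prescription $\theta=1/(1+\floor{\la/d})$ means $\theta=1$, for which $(1-\theta)^{-\la/d}$ and the maximisation of $y^{\la/d}e^{-(1-\theta)y}$ blow up; for any feasible $\theta<1$ the achieved exponent is $\theta\,\frac nk\frac{c\pinf\vab}8a^d$, and its deficit relative to $\frac{n+1}k\frac{c\pinf\vab}8a^d$ grows linearly in $n$, so it cannot be absorbed into a constant --- the remark ``$(n+1)/n\le 2$'' does not address this, since the discrepancy sits inside an exponential multiplied by $n$, not as a multiplicative factor (when absorption is possible at all, for $\la\ge d$, it needs the extra observation that $a\le\diam(\X)$ may be assumed and $c\pinf\vab\,\diam(\X)^d\le F_x(\diam(\X))=1$, which you do not give). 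In fact your very first estimate is already weaker than the claim: for $d=1$, $\la=0.9$, $k=1$, $P$ uniform on $[0,1]$, $a=1/2$, the boundary term $a^\la e^{1/4}e^{-\frac nk\frac{c\pinf\vab}8a^d}$ exceeds the stated right-hand side by a factor of order $(n+1)^{\la/d}$, so no downstream manipulation can close the gap; what this route proves is a bound of the same shape with strictly worse constants (note also that $\alpha^{-\la/d}=8^{\la/d}(c\pinf\vab)^{-\la/d}(k/n)^{\la/d}$ carries an extra $8^{\la/d}$ absent from $C_{\la,d,P}$).

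Your beta-representation variant starts exactly as the paper does, but the completion is not uniform in $k$: after $(1-u)^{n-k}\le e^{-(n-k)u}$ and a fixed-$\theta$ split, the incomplete-gamma bound gives $\Gamma(k+\la/d)\big((1-\theta)(n-k)\big)^{-(k+\la/d)}$, and dividing by $\beta(k,n-k+1)$ leaves the factors $(1-\theta)^{-(k+\la/d)}$ and $\prod_{j=0}^{k-1}\frac{n-j}{n-k}$, both of which can be exponentially large in $k$ (the lemma must hold for all $1\le k\le n$ with constants free of $k$, and it is used in the paper with growing $k$); the ratio $\Gamma(k+\la/d)/\Gamma(k)$ you invoke never arises in this computation. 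The paper's proof avoids all of this by keeping the beta integral intact: it writes the truncated integral as $\beta(k+\la/d,n-k+1)\bigl(1-I(F_x(a);k+\la/d,n-k+1)\bigr)$, proves that $s\mapsto I(\alpha;s,t)$ is non-increasing so that $\la/d$ can be replaced by the integer $\floor{\la/d}+1$, identifies $1-I$ with a binomial tail $\Pb(B<k+\floor{\la/d}+1)$ for $B$ binomial with $n+\floor{\la/d}+1$ trials (Lemma~\ref{beta}), and applies the Chernoff bound there; the beta-function ratio then yields the prefactor $2\Gamma(2+\floor{\la/d})(k/(n+1))^{\la/d}$ exactly as in Lemma~\ref{moments_tau}, multiplied cleanly by the exponential factor. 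That decoupling of the polynomial prefactor from the exponential decay is the idea missing from your proposal.
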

\begin{proof}
Exactly as in the proof of Lemma \ref{moments_tau}, we can write \begin{align*}
\E[\ta(x)^\la\,\1{\ta(x)>a}] & =\frac 1{\beta(k,n-k+1)}\int_{F_x(a)}^1(F_x^{-1}(u))^\la\,u^{k-1}(1-u)^{n-k}\,
\mathrm du \\ & \leq\frac 1{\beta(k,n-k+1)}(c\pinf\,\vab)^{-\la/d}\int_{F_x(a)}^1u^{k+\la/d-1}(1-u)^{n-k}\,
\mathrm du,
\end{align*}
where $$\int_{F_x(a)}^1u^{k+\la/d-1}(1-u)^{n-k}\,\mathrm du=\beta(k+\la/d,n-k+1)(1-I(F_x(a);k+\la/d,n-k+1)).$$
Now \begin{align*}
\beta(s,t)^2\partial_sI(\alpha;s,t) & =\int_{[0,1]^2}\1{u\leq \alpha}\,u^{s-1}(1-u)^{t-1}v^{s-1}(1-v)^{t-1}
(\log\,u-\log\,v)\,\mathrm d(u,v) \\ & =\int_{[0,1]^2}\1{u\leq\alpha\leq v}\,u^{s-1}(1-u)^{t-1}v^{s-1}(1-v)^{t-1}
(\log\,u-\log\,v)\,\mathrm d(u,v) \\ & \leq 0.    
\end{align*}
In the second equality above, we used the fact that for an integrable function $h:[0,1]^2\to\R$ such that 
$h(u,v)=-h(v,u)$ for each $(u,v)\in[0,1]^2$, we have $\int_{[0,1]^2}\1{u,v\leq\alpha}h(u,v)\,\mathrm d(u,v)=0$.
It means that the function $s\mapsto I(\alpha;s,t)$ is non-increasing, so that $$1-I(F_x(a);k+\la/d,n-k+1)\leq 1-
I(F_x(a);k+\floor{\la/d}+1,n-k+1).$$ Then, because $k+\floor{\la/d}+1$ is a positive integer, applying Lemma
\ref{beta} yields $$1-I(F_x(a);k+\floor{\la/d}+1,n-k+1)\leq\Pb(B<k+\floor{\la/d}+1),$$ 
where $B$ has a binomial distribution with parameters $n+\floor{\la/d}+1$ and $F_x(a)$.
Thanks to the binomial inequality proved in Lemma \ref{tail_tau}, we get \begin{align*}
\Pb(B\leq k+\floor{\la/d}) & \leq e^{1/4}\exp\left(-\frac{n+\floor{\la/d}+1}{8(k+\floor{\la/d})}\;F_x(a)\right) 
\\ & \leq e^{1/4}\exp\left(-\frac{n+1}k\;\frac{c\pinf\,\vab}{8(1+\floor{\la/d})}\;a^d\right).   
\end{align*}
In then end, we proved that \begin{align*}
\E[\ta(x)^\la\,\1{\ta(x)>a}] & \leq\frac{\beta(k+\la/d,n-k+1)}{\beta(k,n-k+1)}(c\pinf\,\vab)^{-\la/d}e^{1/4}\exp
\left(-\frac{n+1}k\;\frac{c\pinf\,\vab}{8(1+\floor{\la/d})}\;a^d\right), 
\end{align*}
where $$\frac{\beta(k+\la/d,n-k+1)}{\beta(k,n-k+1)}\leq 2\Gamma(2+\floor{\la/d})\,\left(\frac k{n+1}\right)^{
\la/d},$$ as we have seen in the proof of Lemma \ref{moments_tau}.
\end{proof}

\begin{lemma}\label{pair_ind_distribution}
Let $x,y\in\X,\,1\le\ell,\ell'\le n$ with $\ell+\ell'\leq n$, and let $\varphi_1,\varphi_2$ be measurable bounded
functions from $\R^d$ to $\R$, then setting $\Phi:=\E[\varphi_1(X_{\ik[\ell]}-x)\varphi_2(X_{\hat i_{\ell'}(y)}-y)
\1{\tal+\hat\tau_{\ell'}(y)<\norm{y-x}}]$, we have \begin{multline*}
\Phi=C_{n,\ell,\ell'}\int_{\R_+^2}\1{r_1+r_2<\norm{y-x}}F_x(r_1)^{\ell-1}F_y(r_2)^{\ell'-1}(1-F_x(r_1)-F_y(r_2))^{
n-\ell-\ell'} \\ \times(r_1r_2)^{d-1}P_x(\varphi_1,r_1)P_y(\varphi_2,r_2)\,\mathrm d(r_1,r_2),
\end{multline*}
where we have set $C_{n,\ell,\ell'}:=n(n-1)\binom{n-2}{\ell-1}\binom{n-\ell-1}{\ell'-1}$, $P_a(\varphi,r):=\int_{
S^{d-1}}\varphi(r\theta)p(a+r\theta)\,\mathrm d\s(\theta)$, and $F_a(r):=P(B(a,r))$.
\end{lemma}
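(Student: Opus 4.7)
}

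The plan is to decompose the event $\{\hat\tau_\ell(x)+\hat\tau_{\ell'}(y)<\norm{y-x}\}$ into the disjoint union, over ordered pairs $(i,j)\in\ieu[n]^2$ with $i\neq j$, of the events $E_{i,j}:=\{\hat i_\ell(x)=i,\,\hat i_{\ell'}(y)=j,\,\norm{X_i-x}+\norm{X_j-y}<\norm{y-x}\}$. Note that on the event $\{\hat\tau_\ell(x)+\hat\tau_{\ell'}(y)<\norm{y-x}\}$ the two balls $B(x,\hat\tau_\ell(x))$ and $B(y,\hat\tau_{\ell'}(y))$ are disjoint, so the indices $\hat i_\ell(x)$ and $\hat i_{\ell'}(y)$ are automatically distinct, justifying the restriction to $i\neq j$. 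Because the sample is i.i.d., the $n(n-1)$ summands are equal, so it suffices to compute the contribution from $(i,j)=(1,2)$ and multiply by $n(n-1)$.

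Next, I would condition on the positions $X_1=x+r_1\theta_1$ and $X_2=y+r_2\theta_2$ with $r_1+r_2<\norm{y-x}$, so that $B(x,r_1)$ and $B(y,r_2)$ are disjoint. On the event $E_{1,2}$, among the remaining $n-2$ observations $X_3,\dots,X_n$ (which are independent of $(X_1,X_2)$), exactly $\ell-1$ must lie in $B(x,r_1)$, exactly $\ell'-1$ must lie in $B(y,r_2)$, and the remaining $n-\ell-\ell'$ must lie outside both balls; the disjointness of the balls makes these three regions mutually exclusive. The conditional probability is therefore given by the trinomial
\begin{equation*}
\frac{(n-2)!}{(\ell-1)!(\ell'-1)!(n-\ell-\ell')!}\,F_x(r_1)^{\ell-1}\,F_y(r_2)^{\ell'-1}\,(1-F_x(r_1)-F_y(r_2))^{n-\ell-\ell'},
\end{equation*}
and the product of the first two multinomial factors with $n(n-1)$ rearranges exactly as $C_{n,\ell,\ell'}=n(n-1)\binom{n-2}{\ell-1}\binom{n-\ell-1}{\ell'-1}$.

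Combining these pieces with the densities $p(x+r_1\theta_1)$ and $p(y+r_2\theta_2)$ of $X_1$ and $X_2$ and with the integrand $\varphi_1(X_1-x)\varphi_2(X_2-y)$, the expectation $\Phi$ becomes an integral over $(x_1,x_2)\in\R^d\times\R^d$. Passing to spherical coordinates $X_1-x=r_1\theta_1$, $X_2-y=r_2\theta_2$ with $(r_1,\theta_1,r_2,\theta_2)\in\R_+\times S^{d-1}\times\R_+\times S^{d-1}$ introduces the Jacobians $r_1^{d-1}r_2^{d-1}$; the integrations in $\theta_1$ and $\theta_2$ factorise and produce exactly the kernels $P_x(\varphi_1,r_1)$ and $P_y(\varphi_2,r_2)$ defined in the statement. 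This yields the claimed identity.

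The calculation is essentially routine, so the main obstacle is just bookkeeping: one has to verify carefully that the indicator $\1{r_1+r_2<\norm{y-x}}$ is exactly what is needed to guarantee the disjointness of the two balls (and hence the trinomial structure and the distinctness of $i$ and $j$), and that the combinatorial prefactors collapse to $C_{n,\ell,\ell'}$.
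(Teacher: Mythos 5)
Your proposal is correct and follows essentially the same route as the paper: both decompose the event according to which observations realize the two nearest neighbours, use the indicator $\1{r_1+r_2<\norm{y-x}}$ to make the balls $B(x,r_1)$ and $B(y,r_2)$ disjoint (so the remaining $n-2$ points follow a trinomial allocation with cell probabilities $F_x(r_1)$, $F_y(r_2)$, $1-F_x(r_1)-F_y(r_2)$), and conclude by the change of variables and polar coordinates producing $P_x(\varphi_1,r_1)P_y(\varphi_2,r_2)$. The only difference is bookkeeping: the paper enumerates the index sets $A_1,A_2$ and the pair $\{i_1,i_2\}$ explicitly and counts $C_{n,\ell,\ell'}$ disjoint events, whereas you reduce to the pair $(1,2)$ by exchangeability and recover the same factor since $n(n-1)$ times the multinomial coefficient equals $C_{n,\ell,\ell'}$.
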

\begin{proof}
We can show that the event $\mathcal D:=(\tal+\hat\tau_{\ell'}(y)<\norm{y-x})$ occurs if and only if there exists
a unique couple of integers $(i_1,i_2)\in\ieu^2$ with $i_1\neq i_2$ and a unique couple of subsets $A_1,A_2
\subset\ieu\backslash\{i_1,i_2\}$ with $\Card(A_1)=\ell-1,\,\Card(A_2)=\ell'-1$, and $A_1\cap A_2=\emptyset$ such 
that the event $\mathcal C_{A_1,A_2,i_1,i_2}$, defined by the following three conditions, occurs:
\begin{itemize}
\item (i) $\norm{X_{i_1}-x}+\norm{X_{i_2}-y}<\norm{y-x}$,
\item (ii) for all $(a_1,a_2)\in A_1\times A_2,\,\norm{X_{a_1}-x}<\norm{X_{i_1}-x},\,\norm{X_{a_2}-y}<\norm{
X_{i_2}-y}$, and 
\item (iii) for all $a\not\in A_1\cup A_2\cup\{i_1,i_2\},\,\norm{X_a-x}>\norm{X_{i_1}-x}$ and $\norm{X_a-y}>
\norm{X_{i_2}-y}$.
\end{itemize}
We can see $\mathcal D\subseteq\cup_{(i_1,i_2,A_1;A_2)}\mathcal C_{A_1, A_2, i_1, i_2}$ by letting $i_1$ be the 
index of the $\ell$-th nearest neighbour and $A_1$ be the set of indexes of the $\ell-1$ nearest neighbours of $x$;
and similarly $i_2$ be the index of the $\ell'$-th nearest neighbour and $A_2$ be the set of indexes of the
$\ell'-1$ nearest neighbours of $y$.
To see $\mathcal C_{A_1,A_2,i_1,i_2}\subseteq\mathcal D$, we can show that $a\in A_1\sqcup\{i_1\}\iff\norm{
X_a-x}\le\norm{X_{i_1}-x}$ and $a\in A_2\sqcup\{i_2\}\iff\norm{X_a-y}\le\norm{X_{i_2}-y}$ using Conditions (i-iii)
and the triangular inequality. $A_1$ thus characterizes the $\ell-1$ nearest neighbours of $x$ and $\norm{X_{i_1}
-y}=\tal$. Likewise, $A_2$ is the set of the $\ell'-1$ nearest neighbours of $y$ and $\norm{X_{i_2}-y}=
\hat\tau_{\ell'}(y)$. By Condition (i), we get the condition of $\mathcal D$. Here, $(A_1,A_2,\{i_1, i_2\})$
satisfying Conditions (i-iii) is unique because of the uniqueness of those nearest neighbours.

One can then write the event $\mathcal D$ as a finite union of the disjoint events $\mathcal C_{A_1,A_2,i_1,i_2}$.
The number of all possible triplets $(A_1,A_2,\{i_1,i_2\})$ is given by $C_{n,\ell,\ell'}$.
Computing the restriction of the expectation to one of these events, applying the change of variables $z_1:=
X_{i_1}-x$ and $z_2:=X_{i_2}-y$, we get the expression $$\Phi=C_{n,\ell,\ell'}\int\1{\norm{z_1}+\norm{z_2}<
\norm{y-x}}\varphi_1(z_1)\varphi_2(z_2)f(z_1,z_2)\,\mathrm d(z_1,z_2),\text{ where}$$ \begin{multline*}
f(z_1,z_2):=p(x+z_1)p(y+z_2)P(B(x,\norm{z_1}))^{\ell-1}P(B(y,\norm{z_2}))^{\ell'-1} \\ \times P(B(x,\norm{z_1})^c
\cap B(y,\norm{z_2})^c)^{n-\ell-\ell'}.
\end{multline*}
Further switching to polar coordinates yields the result.
\end{proof}

\begin{coro}\label{ind_cond}
In the context of Lemma \ref{pair_ind_distribution}, setting this time $$\Phi_c:=\E[\varphi_1(X_{\ik[\ell]}-x)
\varphi_2(X_{\hat i_{\ell'}(y)}-y)\mid\tal,\hat\tau_{\ell'}(y)]\,\1{\tal+\hat\tau_{\ell'}(y)<\norm{y-x}},$$ we 
have the conditional independence $$\Phi_c=\E[\varphi_1(X_{\ik[\ell]}-x)\mid\tal]\,\E[\varphi_2(X_{\hat i_{
\ell'}(y)}-y)\mid\hat\tau_{\ell'}(y)]\,\1{\tal+\hat\tau_{\ell'}(y)<\norm{y-x}},\text{ where}$$ 
$$\E[\varphi_1(X_{\ik[\ell]}-x)\mid\tal=r]=\frac 1{\int_{S^{d-1}}p(x+r\theta)\,\mathrm d\s(\theta)}\,\int_{S^{d-1}}
\varphi_1(r\theta)p(x+r\theta)\,\mathrm d\s(\theta),$$ and similarly for $y$.
\end{coro}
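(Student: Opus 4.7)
The plan is to read the corollary off the joint density already implicit in the proof of Lemma \ref{pair_ind_distribution}. In polar coordinates $(r_j,\theta_j)$ for the two displacements $z_1:=X_{\ik[\ell]}-x$ and $z_2:=X_{\hat i_{\ell'}(y)}-y$, that proof integrates $\varphi_1(r_1\theta_1)\varphi_2(r_2\theta_2)$ against a density that is the product of a radius-only factor and the factored angular piece $p(x+r_1\theta_1)p(y+r_2\theta_2)$. This factorisation of the angular variables is the entire content of the corollary; what remains is a careful Bayes-rule computation.

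To carry it out, I would apply Lemma \ref{pair_ind_distribution} with the modified integrands $\varphi_j(z)\psi_j(\norm{z})$, for arbitrary bounded $\psi_j\colon\R_+\to\R$, using $\tal=\norm{X_{\ik[\ell]}-x}$ and the identity $P_x(\varphi_j\psi_j,r)=\psi_j(r)P_x(\varphi_j,r)$ to pull the $\psi_j$'s outside of the integrals provided by the lemma. Doing this once with general $\varphi_1,\varphi_2$ and once with $\varphi_1\equiv\varphi_2\equiv 1$, and then taking the ratio and invoking the arbitrariness of $(\psi_1,\psi_2)$, identifies the conditional expectation on the non-overlap event $\{\tal+\hat\tau_{\ell'}(y)<\norm{y-x}\}$ as the product $P_x(\varphi_1,\tal)P_y(\varphi_2,\hat\tau_{\ell'}(y))/[P_x(1,\tal)P_y(1,\hat\tau_{\ell'}(y))]$, which is exactly the product of the two sphere-integral quotients appearing in the statement.

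The final step is to identify each such quotient with the genuine marginal conditional expectation $\E[\varphi_1(X_{\ik[\ell]}-x)\mid\tal=r]$, and similarly for $y$. This follows either by repeating the same disintegration against the one-sample density given in Lemma \ref{single_distribution}, or by specialising the previous calculation to the degenerate case with no $y$-neighbour. Multiplying by the indicator $\1{\tal+\hat\tau_{\ell'}(y)<\norm{y-x}}$, which is measurable with respect to $(\tal,\hat\tau_{\ell'}(y))$ and thus commutes with the conditioning, delivers the corollary. No real obstacle is expected; the only subtlety is to notice that, because the indicator depends only on the radii, the conditional expectation evaluated on the non-overlap event really does return the unrestricted single-point conditional expectations of each factor, so no normalising correction is needed.
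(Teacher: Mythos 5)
Your proposal is correct and follows essentially the same route as the paper: the paper's proof also reads the corollary off the polar-coordinate expression of Lemma \ref{pair_ind_distribution}, observing that the only non-radial factor is $P_x(\varphi_1,r_1)P_y(\varphi_2,r_2)$, which after normalisation gives both the conditional independence and the stated angular density. Your test-function/ratio argument and the appeal to Lemma \ref{single_distribution} for the marginal conditional expectation merely make explicit the disintegration that the paper states in one line.
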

\begin{proof}
We use the expression with polar coordinates from Lemma \ref{pair_ind_distribution}, where the only quantity 
that is not radial is $P_x(\varphi_1,r_1)P_y(\varphi_2,r_2)$, hence the conditional independence and the
distribution after normalisation.
\end{proof}

\begin{lemma}\label{single_distribution}
Let $x\in\X$ and $1\le\ell\le k$, then the random vector $X_{\ik[\ell]}-x$ has density $f_\ell$ with respect 
to the Lebesgue measure of $\R^d$, where, setting $F_x(r)=P(B(x,r))$, we have 
$$f_\ell(z)=n\binom{n-1}{\ell-1}p(x+z)F_x(\norm z)^{\ell-1}(1-F_x(\norm z))^{n-\ell}.$$
\end{lemma}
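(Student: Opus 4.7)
The plan is to compute the expectation $\E[\varphi(X_{\ik[\ell]}-x)]$ for an arbitrary bounded measurable test function $\varphi\colon\R^d\to\R$ and recognise the result as the integral of $\varphi(z)$ against $f_\ell(z)$. Since $P$ is absolutely continuous (Assumption \ref{cond:reg3}), ties among the distances $\norm{X_i-x}$ occur with probability zero, so the $\ell$th nearest neighbour is almost surely unique and the index $\ik[\ell]$ is well defined.

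First, I would use exchangeability of the i.i.d.\ sample $(X_1,\dots,X_n)$ to write
\[
\E[\varphi(X_{\ik[\ell]}-x)]=\sum_{i=1}^n\E\!\lr[]{\varphi(X_i-x)\,\1{X_i\text{ is the }\ell\text{th NN of }x}}=n\,\E\!\lr[]{\varphi(X_1-x)\,\1{X_1\text{ is the }\ell\text{th NN of }x}}.
\]
The event that $X_1$ is the $\ell$th nearest neighbour of $x$ is equivalent to saying that exactly $\ell-1$ of the remaining $X_2,\dots,X_n$ lie in the open ball $B(x,\norm{X_1-x})$ and the other $n-\ell$ lie in its complement (again using that ties have probability zero so open vs.\ closed does not matter).

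Next, I would condition on $X_1$. Given $X_1$, the indicators $\1{X_j\in B(x,\norm{X_1-x})}$ for $j=2,\dots,n$ are i.i.d.\ Bernoulli with parameter $F_x(\norm{X_1-x})$, where $F_x(r):=P(B(x,r))$. Therefore
\[
\Pb\!\lr(){X_1\text{ is the }\ell\text{th NN of }x\mid X_1}=\binom{n-1}{\ell-1}F_x(\norm{X_1-x})^{\ell-1}\mtlr(){1-F_x(\norm{X_1-x})}^{n-\ell}.
\]
Taking expectation and applying the change of variables $z:=u-x$ in the integral against the density $p$ of $X_1$ yields
\[
\E[\varphi(X_{\ik[\ell]}-x)]=\int_{\R^d}\varphi(z)\,n\binom{n-1}{\ell-1}p(x+z)\,F_x(\norm z)^{\ell-1}(1-F_x(\norm z))^{n-\ell}\,\mathrm dz,
\]
which identifies $f_\ell$ as the density of $X_{\ik[\ell]}-x$ with respect to the Lebesgue measure on $\R^d$.

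There is essentially no hard step here, the argument being a standard order-statistic computation; the only point that deserves a brief sentence is the verification that $P$ being absolutely continuous prevents ties and ensures both the uniqueness of $\ik[\ell]$ and the equivalence between the strict and non-strict inclusion events used to count the Bernoulli trials.
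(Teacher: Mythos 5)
Your proof is correct and is essentially the argument the paper has in mind: the paper omits the details and simply refers to the counting argument of Lemma~\ref{pair_ind_distribution}, of which your exchangeability-plus-conditional-binomial computation is the natural single-point analogue (summing over the admissible index sets $A_1$ of the $\ell-1$ closer points is exactly your $\binom{n-1}{\ell-1}F_x^{\ell-1}(1-F_x)^{n-\ell}$ factor). Your remark that absolute continuity of $P$ rules out ties, makes $\ik[\ell]$ well defined, and lets one ignore the open/closed ball distinction is the right point to flag.
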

\begin{proof}
One can show that for a bounded and measurable mapping $\varphi\colon\R^d\to\R$, $$\E\left[\varphi\left(X_{ 
\ik[\ell]}-x\right)\right]=\int_\X\varphi(z)f_\ell(z)\,\mathrm dx,$$ using some arguments very similar to that 
used for proving Lemma \ref{pair_ind_distribution}. Details are omitted.
\end{proof}

\begin{lemma}\label{negative_correlation}
Let $A$ and $B$ be disjoint Borel sets of $\X$, and $0\le\ell,\ell'\le n$, then we have the negative correlation
$$\Pb\left(\sum_{i=1}^n\mathbf 1_A(X_i)\leq\ell,\,\sum_{i=1}^n\mathbf 1_B(X_i)\leq\ell'\right)\leq\Pb\left(
\sum_{i=1}^n\mathbf 1_A(X_i)\leq\ell\right)\,\Pb\left(\sum_{i=1}^n\mathbf 1_B(X_i)\leq\ell'\right).$$
\end{lemma}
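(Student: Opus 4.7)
The plan is to reduce the joint upper-tail event to a covariance inequality for two monotone functions of a single variable via conditioning on the count in $A$. Set $N_A := \sum_{i=1}^n \mathbf{1}_A(X_i)$ and $N_B := \sum_{i=1}^n \mathbf{1}_B(X_i)$. Since $A \cap B = \emptyset$, the triple $(N_A, N_B, n - N_A - N_B)$ follows a multinomial law. Conditioning on $\{N_A = j\}$ freezes $j$ of the $n$ observations in $A$ and leaves the remaining $n - j$ conditionally i.i.d.\ with distribution $\Pb(\cdot \mid X_1 \in A^c)$, so
\[
N_B \mid N_A = j \sim \operatorname{Bin}\bigl(n - j, q\bigr), \qquad q := \frac{P(B)}{1 - P(A)},
\]
provided $P(A) < 1$; the case $P(A) = 1$ is trivial since then $N_B = 0$ almost surely and the claim reduces to $\Pb(N_A \leq \ell) \leq \Pb(N_A \leq \ell)$.

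Next I would establish the monotonicity of $g(j) := \Pb(N_B \leq \ell' \mid N_A = j)$ in $j$. Since $\operatorname{Bin}(m, q)$ is stochastically increasing in $m$, the tail probability $\Pb(\operatorname{Bin}(m, q) \leq \ell')$ is non-increasing in $m$; because $m = n - j$ is non-increasing in $j$, we conclude that $g$ is non-decreasing in $j$. The function $f(j) := \mathbf{1}_{j \leq \ell}$ is trivially non-increasing. I would then apply the classical covariance inequality (Chebyshev's sum inequality): with $N_A'$ an independent copy of $N_A$,
\[
2\operatorname{Cov}\bigl(f(N_A), g(N_A)\bigr) = \E\bigl[(f(N_A) - f(N_A'))(g(N_A) - g(N_A'))\bigr] \leq 0,
\]
since $f$ and $g$ have opposite monotonicities and the integrand is pointwise non-positive.

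To conclude, the tower property yields $\Pb(N_A \leq \ell, N_B \leq \ell') = \E[f(N_A) g(N_A)]$ and $\Pb(N_B \leq \ell') = \E[g(N_A)]$, whence
\[
\Pb(N_A \leq \ell, N_B \leq \ell') \leq \E[f(N_A)]\,\E[g(N_A)] = \Pb(N_A \leq \ell)\,\Pb(N_B \leq \ell').
\]
The only delicate point is the monotonicity of $g$, which rests on the elementary stochastic dominance $\operatorname{Bin}(m, q) \leq_{\mathrm{st}} \operatorname{Bin}(m+1, q)$ (easily seen by coupling with an independent Bernoulli$(q)$ trial). Everything else is a routine application of conditioning and of the covariance-sign inequality for comonotone vs.\ antimonotone functions, so I do not anticipate a substantive obstacle.
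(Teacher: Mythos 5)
Your proof is correct, and it takes a genuinely different route from the paper. The paper argues by induction on $n$: it conditions on where the last observation $X_n$ falls ($A$, $B$, or neither), derives linear recursions for the joint probability $I^{(n)}_{\ell,\ell'}$ and the marginals $A^{(n)}_\ell$, $B^{(n)}_{\ell'}$, and reduces the induction step to the sign of the product $(A^{(n-1)}_\ell-A^{(n-1)}_{\ell-1})(B^{(n-1)}_{\ell'}-B^{(n-1)}_{\ell'-1})\ge 0$. You instead exploit the exact multinomial structure of $(N_A,N_B)$: conditionally on $N_A=j$, $N_B\sim\operatorname{Bin}(n-j,\,P(B)/(1-P(A)))$, so $g(j)=\Pb(N_B\le\ell'\mid N_A=j)$ is non-decreasing in $j$ by the elementary coupling $\operatorname{Bin}(m+1,q)\stackrel{d}{=}\operatorname{Bin}(m,q)+\operatorname{Bernoulli}(q)$, while $f(j)=\1{j\le\ell}$ is non-increasing; the one-dimensional Chebyshev covariance inequality for functions of opposite monotonicity then gives $\E[f(N_A)g(N_A)]\le\E[f(N_A)]\,\E[g(N_A)]$, which is exactly the claim after the tower property. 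All steps check out, including the degenerate cases $P(A)=1$ and $P(A)+P(B)=1$. What each approach buys: yours is more conceptual, makes visible the underlying mechanism (negative association of multinomial counts via stochastic monotonicity of the conditional law), and extends immediately to general monotone functionals of $N_A$ and $N_B$ rather than just tail indicators; the paper's induction is more elementary and self-contained, needing nothing beyond the law of total probability and a sign check, at the cost of being less transparent about why the inequality holds.
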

\begin{proof}
We prove the result by induction on the number of observations $n$. Assuming that $\sum_{i=1}^0(\cdot)=0$, the
result is trivially true when $n=0$. We define the events \begin{align*}
I_{\ell,\ell'}^{(n)} & :=\left(\sum_{i=1}^n\mathbf 1_A(X_i)\leq\ell,\,\sum_{i=1}^n\mathbf 1_B(X_i)\le\ell'\right),
\\ A_\ell^{(n)} & :=\left(\sum_{i=1}^n\mathbf 1_A(X_i)\leq\ell\right),\quad\text{and} \\ B_{\ell'}^{(n)} & :=
\left(\sum_{i=1}^n\mathbf 1_B(X_i)\leq\ell'\right),
\end{align*}
as well as the two quantities $a:=P(A)$ and $b:=P(B)$. To apply our induction hypothesis, we apply the law of 
total probability to the random variable $X_n$. It yields the three equations \begin{align*}
I_{\ell,\ell'}^{(n)} & =aI_{\ell-1,\ell'}^{(n-1)}+bI_{\ell,\ell'-1}^{(n-1)}+(1-a-b)I_{\ell,\ell'}^{(n-1)}, \\
A_\ell^{(n)} & =aA_{\ell-1}^{(n-1)}+(1-a)A_\ell^{(n-1)},\quad\text{and} \\ B_{\ell'}^{(n)} & =bB_{\ell'-1}^{(n-1)}+
(1-b)B_{\ell'}^{(n-1)}.
\end{align*}
Therefore, \begin{multline*}
A_\ell^{(n)}B_{\ell'}^{(n)}=abA_{\ell-1}^{(n-1)}B_{\ell'-1}^{(n-1)}+a(1-b)A_{\ell-1}^{(n-1)}B_{\ell'}^{(n-1)}+
b(1-a)A_\ell^{(n-1)}B_{\ell'-1}^{(n-1)} \\ +(1-a)(1-b)A_\ell^{(n-1)}B_{\ell'}^{(n-1)}.    
\end{multline*}
We recognise the terms $aA_{\ell-1}^{(n-1)}B_{\ell'}^{(n-1)},\,bA_\ell^{(n-1)}B_{\ell'-1}^{(n-1)}$, and 
$(1-a-b)A_\ell^{(n-1)}B_{\ell'}^{(n-1)}$, so that applying the induction hypothesis, we find that 
\begin{align*}
A_\ell^{(n)}B_{\ell'}^{(n)}-I_{\ell,\ell'}^{(n)} & \geq ab\,(A_{\ell-1}^{(n-1)}B_{\ell'-1}^{(n-1)}-A_{\ell-1}^{
(n-1)}B_{\ell'}^{(n-1)}-A_\ell^{(n-1)}B_{\ell'-1}^{(n-1)}+A_\ell^{(n-1)}B_{\ell'}^{(n-1)}) \\ & =ab(A_\ell^{
(n-1)}-A_{\ell-1}^{(n-1)})(B_{\ell'}^{(n-1)}-B_{\ell'-1}^{(n-1)}) \\ & \geq 0,   
\end{align*}
which concludes the proof by induction.
\end{proof}

\begin{coro}\label{negative_correlation_NN}
Let $x,y\in\X,\,1\le\ell,\ell'\le k$ and let $\chi_x,\chi_y\colon\R_+\to\R_+$ be non-decreasing functions.
Then $$\E[\chi_x(\tal)\,\chi_y(\hat\tau_{\ell'}(y))\,\1{\tal+\hat\tau_{\ell'}(y)<\norm{y-x}}]\leq\E[\chi_x(\tal)]
\,\E[\chi_y(\hat\tau_{\ell'}(y))].$$
\end{coro}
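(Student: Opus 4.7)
}

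My plan is to pass from the probability inequality of Lemma \ref{negative_correlation} to an inequality for non-decreasing functions by a layer-cake argument combined with Fubini. The first step is to encode the nearest-neighbour radii in terms of counts of points in balls, which is the link to Lemma \ref{negative_correlation}. Concretely, for any $r,s\ge 0$, we have $\{\tal>r\}=\{\sum_{i=1}^n\mathbf 1_{B(x,r)}(X_i)\le\ell-1\}$ and $\{\hat\tau_{\ell'}(y)>s\}=\{\sum_{i=1}^n\mathbf 1_{B(y,s)}(X_i)\le\ell'-1\}$. If additionally $r+s<\norm{y-x}$, then the balls $B(x,r)$ and $B(y,s)$ are disjoint, so Lemma \ref{negative_correlation} (with $A=B(x,r)$, $B=B(y,s)$, and the parameters $\ell-1,\ell'-1$) yields
\[
\Pb\mtlr(){\tal>r,\;\hat\tau_{\ell'}(y)>s}\le\Pb\mtlr(){\tal>r}\,\Pb\mtlr(){\hat\tau_{\ell'}(y)>s}.
\]

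The second step is a layer-cake representation. Writing $\bar\chi_x(r):=\chi_x(r)-\chi_x(0)\ge 0$, monotonicity gives $\bar\chi_x(\tal)=\int_0^\infty\mathbf 1\{r<\tal\}\,\mathrm d\chi_x(r)$ as a Lebesgue--Stieltjes integral, and similarly for $\bar\chi_y$. Multiplying the two and applying Fubini,
\[
\E\mtlr[]{\bar\chi_x(\tal)\bar\chi_y(\hat\tau_{\ell'}(y))\,\1{\tal+\hat\tau_{\ell'}(y)<\norm{y-x}}}=\int_0^\infty\!\int_0^\infty\Pb\mtlr(){\tal>r,\hat\tau_{\ell'}(y)>s,\tal+\hat\tau_{\ell'}(y)<\norm{y-x}}\,\mathrm d\chi_x(r)\mathrm d\chi_y(s).
\]
The crucial observation is that for $r+s\ge\norm{y-x}$ the event on the right is empty, so only pairs $(r,s)$ with $r+s<\norm{y-x}$ contribute, and for such pairs we can drop the last constraint and invoke the inequality from the first step. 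Undoing the layer cake then gives $\E[\bar\chi_x(\tal)\bar\chi_y(\hat\tau_{\ell'}(y))\,\1{\cdot}]\le\E[\bar\chi_x(\tal)]\,\E[\bar\chi_y(\hat\tau_{\ell'}(y))]$.

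The final step is to reinsert the constants $\chi_x(0)$ and $\chi_y(0)$. Expanding $\chi_x\chi_y=(\chi_x(0)+\bar\chi_x)(\chi_y(0)+\bar\chi_y)$, each of the four resulting terms can be bounded by dropping the indicator $\1{\cdot}\le 1$ (using non-negativity of $\chi_x,\chi_y$), except the last one which is bounded by the layer-cake argument; summing them recombines into $\E[\chi_x(\tal)]\,\E[\chi_y(\hat\tau_{\ell'}(y))]$. The main point requiring care is the Lebesgue--Stieltjes integration for general non-decreasing $\chi_x,\chi_y$; a clean way around this is to prove the inequality first for step functions $\chi_x=\sum_j a_j\1{r_j<\cdot}$ with $a_j\ge 0$, where the statement reduces to a finite sum of instances of Lemma \ref{negative_correlation}, and then to obtain the general case by monotone approximation.
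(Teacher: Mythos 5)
Your proposal is correct and follows essentially the same route as the paper: both reduce the statement to Lemma~\ref{negative_correlation} applied to the disjoint balls $B(x,r)$ and $B(y,s)$, after a layer-cake/Fubini step in which the constraint $\tal+\hat\tau_{\ell'}(y)<\norm{y-x}$ forces the contributing radii to satisfy $r+s<\norm{y-x}$, so the indicator can be dropped and the tail probabilities factorized. The only difference is cosmetic: you slice by the radii via Lebesgue--Stieltjes measures $\mathrm d\chi_x,\mathrm d\chi_y$ (or step-function approximation, which cleanly handles the constants $\chi_x(0),\chi_y(0)$ and continuity issues), whereas the paper slices by the function values $t,s$ and translates back to radii through the generalised inverses $\chi_x^+,\chi_y^+$.
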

\begin{proof}
Let $R_x,\,R_y\ge 0$ such that $R_x+R_y<\norm{y-x}$, then the event $(\tal>R_x,\,\hat\tau_{\ell'}(y)>R_y)$
coincides with the event $\left(\sum_{i=1}^n\1{X_i\in B(x,R_x)}\leq\ell-1,\,\sum_{i=1}^n\1{X_i\in B(y,R_y)}\leq
\ell'-1\right)$. But because $R_x+R_y<\norm{y-x}$, the two Borel sets $B(x,R_x)$ and $B(y,R_y)$ are disjoint, and
we may apply Lemma \ref{negative_correlation}, which yields
$$\Pb(\tal>R_x,\,\hat\tau_{\ell'}(y)>R_y)\leq\Pb(\tal>R_x)\,\Pb(\hat\tau_{\ell'}(y)>R_y).$$
Now, we can write the expectation of nonnegative random variables thanks to tail distribution functions as 
follows. Let $\chi^+$ denote the right-continuous generalised inverse of a non-decreasing function $\chi$, then
\begin{align*}
\fbox{1-1} & :=\E[\chi_x(\tal)\,\chi_y(\hat\tau_{\ell'}(y))\,\1{\tal+\hat\tau_{\ell'}(y)<\norm{y-x}}] \\ & 
=\int_{\R_+^2}\Pb(\chi_x(\tal)>t,\,\chi_y(\hat\tau_{\ell'}(y))>s,\,\tal+\hat\tau_{\ell'}(y)<\norm{y-x})\,
\mathrm d(t,s) \\ & \leq\int_{\R_+^2}\1{\chi_x^+(t)+\chi_y^+(s)<\norm{y-x}}\Pb(\tal>\chi_x^+(t),\,\hat\tau_{\ell'}
(y)>\chi_y^+(s))\,\mathrm d(t,s) \\ & \leq\int_{\R_+^2}\Pb(\tal>\chi_x^+(t))\,\Pb(\hat\tau_{\ell'}(y)>\chi_y^+(s))
\,\mathrm d(t,s) \\ & =\E[\chi_x(\tal)]\,\E[\chi_y(\hat\tau_{\ell'}(y))].   
\end{align*}
\end{proof}

\begin{lemma}\label{density_V}
Suppose that Assumptions \ref{cond:reg1} to \ref{cond:reg3} hold true, and let $\psi:(\R^d)^{k-1}\to\R$ be a
measurable and bounded mapping, invariant with respect to any permutation of its coordinates. Then with the
notation of Lemma \ref{local_key_lemma}, conditionally on $\ta(x)$ and the set of indexes $I_{k-1}(x)$ of the 
$k-1$ nearest neighbours of $x$, we have the expression 
$$\E\left[\psi\left(V_1,\ldots,V_{k-1}\right)\mid\ta(x),I_{k-1}(x)\right]=\int_{B(0,1)^{k-1}}\psi(z_1,\ldots,z_{k-1})
\prod_{i=1}^{k-1}\frac{f_x(z_i)}{F_x(\tal)}\,\mathrm dz_1\cdots\,\mathrm dz_{k-1},$$
where $$f_x(z)=\1{x\in B(0,1)}\;p(x-z\ta(x))\,\ta(x)^d,$$
and $F_x(r):=P(B(x,z))$ denotes the cdf of the variable $\norm{X_1-x}$.
Furthermore, the normalisation constant is lower bounded by 
$$F_x\left(\ta(x)\right)=\int_{\R^d}f_x(z)\,\mathrm dz\geq c\pinf\vab\,\ta(x)^d.$$
\end{lemma}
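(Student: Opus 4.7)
\medskip

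\noindent\textbf{Proof proposal for Lemma \ref{density_V}.}
The plan is to derive the conditional law of the rescaled nearest neighbours by direct computation of the joint density, exploiting the exchangeability of $(X_1,\ldots,X_n)$ and the symmetry of $\psi$ under permutations of its arguments. The key observation is that, given $\ta(x)=r$ and $I_{k-1}(x)=A$ with $|A|=k-1$, the observations $(X_i)_{i\in A}$ are forced to lie in the open ball $B(x,r)$ while the remaining $n-k+1$ observations must lie outside that ball (one of them realising the $k$th order statistic on $S(x,r)$). The assumption that $P$ is absolutely continuous eliminates ties and makes each event well defined up to a set of measure zero.

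The steps I would carry out are as follows. First, by exchangeability, reduce to $A=\{1,\ldots,k-1\}$ and, by symmetry among the indices outside $A$, further condition on the event that the $k$th nearest neighbour has index $k$. Second, write the unconditional expectation
\[
\E\!\left[\psi(V_1,\ldots,V_{k-1})\,\mathbf 1_{\ta(x)\in B}\,\mathbf 1_{I_{k-1}(x)=\{1,\ldots,k-1\}, \hat i_k(x)=k}\right]
\]
as an iterated integral: integrate $(X_{k+1},\ldots,X_n)$ first, producing a factor $(1-F_x(r))^{n-k}$ with $r=\norm{X_k-x}$; then condition on the radius $r=\norm{X_k-x}$ using polar coordinates to extract the infinitesimal contribution of the order statistic; finally integrate $(X_1,\ldots,X_{k-1})$ against $\prod_{i=1}^{k-1} p(X_i)\mathbf 1_{\norm{X_i-x}<r}$. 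Third, perform the change of variables $z_i:=(x-X_i)/r$ for $i=1,\ldots,k-1$, whose Jacobian contributes $r^d$ per coordinate, turning each inner factor into the function $f_x(z_i)=\mathbf 1_{z_i\in B(0,1)}\,p(x-r z_i)\,r^d$. Fourth, identify the same change of variables applied to the event $I_{k-1}(x)=\{1,\ldots,k-1\}, \hat i_k(x)=k$ itself (i.e. the case $\psi\equiv 1$) as producing the normaliser $F_x(r)^{k-1}$, since $\int_{\R^d} f_x(z)\,\mathrm dz = F_x(r)$. Dividing, and using the symmetry of $\psi$ to absorb the arbitrary choice of ordering within $A$, yields the stated conditional representation.

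For the lower bound on the normalising constant, I would combine Assumption \ref{cond:reg2} and \ref{cond:reg3} directly: since $\X$ is the support of $P$ and $p\ge \pinf$ on $\X$,
\[
F_x(\ta(x))=\int_{B(x,\ta(x))\cap\X} p(y)\,\mathrm dy\ge \pinf\,\va{B(x,\ta(x))\cap\X}\ge c\,\pinf\,\vab\,\ta(x)^d,
\]
which is the claimed inequality; the identity $F_x(\ta(x))=\int_{\R^d} f_x(z)\,\mathrm dz$ follows from the same change of variables $z=(x-y)/\ta(x)$.

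The main technical obstacle will be the conditioning itself: $\ta(x)$ is an absolutely continuous random variable while $I_{k-1}(x)$ is discrete, so the ``conditional expectation'' must be interpreted through a disintegration formula. The cleanest way is to test against an arbitrary bounded measurable function of $(\ta(x), I_{k-1}(x))$ and verify the identity after integration, which reduces to the unconditional Fubini computation sketched above; the rest is bookkeeping of combinatorial factors that cancel between numerator and denominator thanks to the symmetry assumption on $\psi$.
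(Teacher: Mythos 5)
Your proposal is correct and follows essentially the same route as the paper: both identify the conditional law by a Fubini/disintegration computation over the choice of the index realising the $k$th order statistic, integrate the $k-1$ inner points conditionally on that radius via the change of variables $z_i=(x-X_i)/\ta(x)$, and recognise $F_x(\ta(x))^{k-1}$ as the normaliser, while the lower bound $F_x(\ta(x))\ge c\pinf\vab\,\ta(x)^d$ is obtained exactly as you do, from Assumptions \ref{cond:reg2} and \ref{cond:reg3}. The only (harmless) difference is bookkeeping: the paper never writes the radial law of $\ta(x)$, instead replacing the indicator product $\prod_{j\in I}\1{Z_j<Z_{i_k}}$ by its conditional expectation $F_x(Z_{i_k})^{k-1}$ inside the expectation and recombining, whereas you integrate the radius explicitly in polar coordinates and divide by the $\psi\equiv 1$ case.
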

\begin{proof}
For conciseness of notation, we set $Z_i:=\norm{X_i-x}$ for $1\leq i\leq n$. Let $I:=\{i_1,\ldots,i_{k-1}\}$ be a
subset of $\ieu$ with cardinality $k-1$. Let $\check g:\R_+\to\R$ and $\psi:\R^{d(k-1)}\to\R$ be two
measurable and bounded mappings, with $\psi$ being invariant with respect to any permutation of its coordinates. 
We also denote for $t>0$, $$\overline\psi(t):=\int_{B(0,1)^{k-1}}\psi(z_1,\ldots,z_{k-1})\prod_{i=1}^{k-1}p(x-
tz_i)t^{d(k-1)}\,\mathrm dz_1\cdots\mathrm dz_{k-1}.$$
Finally, we denote by $F_x$ the cdf of the variable $Z_1$. Using the independence properties between 
$X_1,\dots,X_n$, we have \begin{eqnarray*}
&& \E\left[\psi\left(V_1,\ldots,V_{k-1}\right) \check g\left(\ta(x)\right)\1{I_{k-1}(x)=I}\right] \\
&=& \sum_{i_k\notin I}\E\left[\psi\left(\frac{X_{i_1}-x}{Z_k},\ldots,\frac{X_{i_{k-1}}-x}{Z_k}\right)\prod_{
j\in I}\1{Z_j<Z_{i_k}}\check g\left(Z_{i_k}\right)\prod_{\ell\notin I\cup\{i_k\}}\1{Z_\ell>Z_{i_k}}\right] \\
&=& \sum_{i_k\notin I}\E\left[\overline\psi\left(Z_{i_k}\right)\check g\left(Z_{i_k}\right)\prod_{\ell\notin I\cup
\{i_k\}}\1{Z_\ell>Z_{i_k}}\right] \\
&=& \sum_{i_k\notin I}\E\left[\frac{\overline\psi\left(Z_{i_k}\right)}{F_x\left(Z_{i_k}\right)^{k-1}}\check g
\left(Z_{i_k}\right)\prod_{j\in I}\1{Z_j<Z_{i_k}}\prod_{\ell\notin I\cup\{i_k\}}\1{Z_\ell>Z_{i_k}}\right] \\
&=& \E\left[\frac{\overline\psi\left(\ta(x)\right)}{F_z\left(\ta(x)\right)^{k-1}}\check g\left(\ta(x)\right)
\1{I_{k-1}(x)=I}\right].
\end{eqnarray*}
From the definition of the conditional expectation, we deduce that $$\E\left[\psi\left(V_1,\ldots,V_{k-1}\right)
\mid\ta(x),I_{k-1}(x)\right]=\frac{\overline\psi\left(\ta(x)\right)}{F_x\left(\ta(x)\right)^{k-1}},$$
which is the required expression.
\end{proof}

\begin{lemma}\label{anti_concentration}
Let $(Z_j)_{1\le j\le\nu}$ be independent positive random variables such that there exist $C>0$ and $b\in(0,1)$
such that for all $1\le j\le\nu$ and all $\varepsilon>0,\,\Pb(Z_j\leq\varepsilon)\leq C\varepsilon^b$. Then for 
all $\varepsilon>0$, we have $$\Pb\left(\sum_{j=1}^\nu Z_j\leq\varepsilon\right)\leq\frac 1\nu\,\Gamma(b)\,\frac{
\Gamma(1+b)^{\nu-1}}{\Gamma(b\nu)}\left(C\varepsilon^b\right)^\nu.$$
\end{lemma}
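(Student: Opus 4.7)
}
The plan is to proceed by induction on $\nu$, using the tail hypothesis $\Pb(Z_j\le\varepsilon)\le C\varepsilon^b$ together with a single integration by parts at each step to reduce the estimate to a Beta integral.

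The base case $\nu=1$ is immediate: the bound $\Pb(Z_1\le\varepsilon)\le C\varepsilon^b$ coincides with the right-hand side since $\Gamma(1+b)^{0}=1$ and $\Gamma(b)/\Gamma(b)=1$. For the inductive step, assume the claim at index $\nu-1$ and condition on $Z_\nu$. By independence and the fact that $\sum_{j<\nu}Z_j\ge 0$,
\[
\Pb\!\left(\sum_{j=1}^\nu Z_j\le\varepsilon\right)=\E\!\left[\Pb\!\left(\sum_{j=1}^{\nu-1}Z_j\le\varepsilon-Z_\nu\,\Big\vert\,Z_\nu\right)\1{Z_\nu\le\varepsilon}\right].
\]
Applying the inductive hypothesis to the conditional probability (with $\varepsilon$ replaced by $(\varepsilon-Z_\nu)_+$) reduces the problem to controlling the truncated moment $\E[(\varepsilon-Z_\nu)_+^{b(\nu-1)}]$.

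The core computation is then to bound this moment via integration by parts against the distribution function $F_\nu$ of $Z_\nu$, which satisfies $F_\nu(z)\le Cz^b$ by assumption. The boundary terms vanish (at $z=\varepsilon$ the weight is zero, at $z=0$ the CDF is zero), leaving
\[
\E[(\varepsilon-Z_\nu)_+^{b(\nu-1)}]=b(\nu-1)\int_0^\varepsilon(\varepsilon-z)^{b(\nu-1)-1}F_\nu(z)\,\mathrm dz\le b(\nu-1)\,C\,\varepsilon^{b\nu}\int_0^1(1-u)^{b(\nu-1)-1}u^b\,\mathrm du,
\]
after the rescaling $z=\varepsilon u$. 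The remaining integral is the Beta function $B(b+1,b(\nu-1))=\Gamma(b+1)\Gamma(b(\nu-1))/\Gamma(b\nu+1)$.

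Plugging this back into the inductive bound and simplifying using the identities $\Gamma(b+1)=b\,\Gamma(b)$ cancelling against the leading factor $b(\nu-1)$, together with $\Gamma(b\nu+1)=b\nu\,\Gamma(b\nu)$ absorbing the factor $1/(\nu-1)$ into $1/\nu$, yields exactly the claimed constant $\frac{1}{\nu}\Gamma(b)\Gamma(1+b)^{\nu-1}/\Gamma(b\nu)$ together with $(C\varepsilon^b)^\nu$. There is no real obstacle here beyond careful bookkeeping of the $\Gamma$-function identities; the proof is purely an induction combining independence, integration by parts, and a Beta-function evaluation.
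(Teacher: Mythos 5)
Your proof is correct and follows essentially the same route as the paper: induction on $\nu$, conditioning on the last variable, and reducing the truncated moment $\E[(\varepsilon-Z_\nu)_+^{b(\nu-1)}]$ to a Beta integral via the tail bound $\Pb(Z_\nu\le z)\le C z^b$, with the $\Gamma$-identities $\Gamma(1+b)=b\,\Gamma(b)$ and $\Gamma(b\nu+1)=b\nu\,\Gamma(b\nu)$ giving exactly the stated constant. The only cosmetic differences are that you convert the moment by integration by parts against the CDF (the paper uses a tail-probability representation followed by a change of variables) and that you carry the closed-form constant through the induction, whereas the paper keeps it as a product of Beta factors and simplifies at the end.
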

\begin{proof}
We prove by induction on $\nu$ that $$\Pb\left(\sum_{j=1}^\nu Z_j\leq\varepsilon\right)\leq K_\nu\left(C
\varepsilon^b\right)^\nu=b^{\nu-1}\,(\nu-1)!\left(\prod_{j=1}^{\nu-1}\beta(1+b,bj)\right)\left(C\varepsilon^b
\right)^\nu.$$ For $\nu=1$, the statement writes $P(Z_1\leq\varepsilon)\leq C\varepsilon^b$, which is true 
by hypothesis. Suppose that the result is true for $\nu$ random variables, we can write \begin{align*}
\Pb\left(\sum_{j=1}^{\nu+1}Z_j\leq\varepsilon\right) & =\E\left[\Pb\left(\sum_{j=1}^\nu Z_j\leq\varepsilon-Z_{
\nu+1}\mid Z_{\nu+1}\right)\1{Z_{\nu+1}\leq\varepsilon}\right] \\ & \leq K_\nu C^\nu\E[(\varepsilon-Z_{\nu+1})^{
b\nu}\1{Z_{\nu+1}\leq\varepsilon}] \\ & =K_\nu C^\nu\int_{\R_+}\Pb(Z_{\nu+1}\leq\varepsilon,\,\varepsilon-Z_{
\nu+1}>s^{1/b\nu})\,\mathrm ds \\ & \leq K_\nu C^{\nu+1}\int_0^{\varepsilon^{b\nu}}(\varepsilon-s^{1/b\nu})^b\,
\mathrm ds \\ & =K_\nu  C^{\nu+1}\int_0^\varepsilon(\varepsilon-s)^b s^{-1+b\nu}\,b\nu\,\mathrm ds \\ & =K_\nu
C^{\nu+1}\,b\nu\,\varepsilon^{b(\nu+1)}\int_0^\varepsilon\left(1-\frac s\varepsilon\right)^b\left(
\frac s\varepsilon\right)^{-1+b\nu}\,\frac{\mathrm ds}\varepsilon \\ & =K_\nu\,b\nu\,\beta(1+b,b\nu)\;\left(C
\varepsilon^b\right)^{\nu+1} \\ & =K_{\nu+1}\left(C\varepsilon^b\right)^{\nu+1},
\end{align*}
which concludes the induction. To obtain the expression in the lemma's statement, observe that \begin{align*}
K_\nu & =b^{\nu-1}\,(\nu-1)!\prod_{j=1}^{\nu-1}\frac{\Gamma(1+b)\Gamma(bj)}{\Gamma(1+b(j+1))}\\ & =b^{\nu-1}\,
(\nu-1)!\,\Gamma(1+b)^{\nu-1}\frac{\Gamma(b)}{\Gamma(b\nu)}\prod_{j=1}^{\nu-1}\frac 1{b(j+1)} \\ & =\frac 1\nu
\,\Gamma(1+b)^{\nu-1}\frac{\Gamma(b)}{\Gamma(b\nu)}.
\end{align*}
\end{proof}

\newcommand{\ellmax}{\overline{\ell}}
\newcommand{\ellmin}{\underline{\ell}}

\begin{lemma}\label{lem:negligible_integral_simple_simplified}
Let $\kappa\in[0,\infty)$. Suppose that $k^2=O(n)$ and $\kappa=O(k)$. Then, for all $\ell\in\ieu[k]$, denoting 
$L_k=1-(2k)^{-1}$ and $C_{\ell,n}:=n\binom{n-1}{\ell-1}$, we have
\[
C_{\ell,n}\int_{\R_+}(p(x)\,\vab\,r^d)^{\ell-1}r^\kappa\exp(-nL_kp(x)\,\vab\,r^d)^{n-\ell}\,r^{d+1}\,\mathrm dr=
O(k^{\frac{2+\kappa}d}n^{-\frac{2+\kappa}d}).
\]
\end{lemma}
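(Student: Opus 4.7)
The idea is a straightforward change of variables that reduces the integral to an incomplete Gamma integral and then to control the resulting ratio of Gamma functions uniformly in $\ell \in \ieu[k]$. (I will read the exponent as $\exp(-n L_k p(x)\vab r^d)$, treating the ``${}^{n-\ell}$'' as a typographical artefact, since the exponent is already $n$ and $L_k$ is the correction absorbing the factor $(n-\ell)/n$.)

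First, I would collect all the powers of $r$ into a single one: the integrand equals
$(p(x)\vab)^{\ell-1} r^{d\ell + \kappa + 1} \exp(-n L_k p(x)\vab r^d)$,
and then substitute $u = n L_k p(x) \vab r^d$, so $r^{d\ell + \kappa + 1}\,\mathrm{d}r = \frac{1}{d}(n L_k p(x)\vab)^{-\ell-(\kappa+2)/d} u^{\ell + (\kappa+2)/d - 1}\,\mathrm{d}u$. The radial integral then becomes exactly
\[
\frac{1}{d}\,\frac{\Gamma(\ell + (\kappa+2)/d)}{(n L_k p(x)\vab)^{\ell + (\kappa+2)/d}}.
\]

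Next, I would multiply by $C_{\ell,n}(p(x)\vab)^{\ell-1}$ and collect powers. Using the elementary bound $C_{\ell,n} = n\binom{n-1}{\ell-1} \le n^\ell/\Gamma(\ell)$, the whole quantity is dominated by
\[
\frac{1}{d\,(p(x)\vab)^{1 + (\kappa+2)/d}}\cdot\frac{1}{L_k^{\ell + (\kappa+2)/d}}\cdot\frac{\Gamma(\ell + (\kappa+2)/d)}{\Gamma(\ell)}\cdot n^{-(\kappa+2)/d}.
\]
The prefactor $(p(x)\vab)^{-1-(\kappa+2)/d}$ is bounded by a constant since $p \ge \pinf > 0$ by \ref{cond:reg3}. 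For the ratio of Gamma functions, I would invoke the standard estimate (as used in the proof of Lemma~\ref{moments_tau}) that $\Gamma(a+s)/\Gamma(a) \le \Gamma(2 + \floor{s})\,a^s$, which yields $\Gamma(\ell + (\kappa+2)/d)/\Gamma(\ell) \le C_{d,\kappa}\,\ell^{(\kappa+2)/d} \le C_{d,\kappa}\,k^{(\kappa+2)/d}$ since $\ell \le k$.

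The remaining step, which is the only mildly delicate one, is to bound $L_k^{-\ell - (\kappa+2)/d}$ uniformly in $\ell \le k$ and in $n$. Here I would use both hypotheses: since $\kappa = O(k)$ and $\ell \le k$, the exponent $\ell + (\kappa+2)/d$ is $O(k)$, and
\[
L_k^{-\ell - (\kappa+2)/d} = \Bigl(1 - \frac{1}{2k}\Bigr)^{-O(k)}
\]
is bounded by a constant as $k \to \infty$ (it converges to a power of $e^{1/2}$); the condition $k^2 = O(n)$ is what makes $L_k$ well defined and bounded away from $0$ for the range of $\ell$ considered. Combining the three bounded factors with $k^{(\kappa+2)/d} n^{-(\kappa+2)/d}$ gives the announced rate. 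The only real obstacle is the uniform control of $L_k^{-O(k)}$, which is handled by the explicit $(1 - 1/(2k))^k \to e^{-1/2}$ limit.
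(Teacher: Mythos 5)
Your proof is correct and follows essentially the same route as the paper's: the same reading of the exponent as $\exp(-nL_kp(x)\,\vab\,r^d)$, the bound $C_{\ell,n}\le n^\ell/\Gamma(\ell)$, the change of variables $u=nL_kp(x)\,\vab\,r^d$, the estimate $\Gamma(\ell+(2+\kappa)/d)/\Gamma(\ell)=O(\ell^{(2+\kappa)/d})$, and the uniform control of $L_k^{-\ell-(2+\kappa)/d}$ via $(1-1/(2k))^{k}\to e^{-1/2}$. Your only slight inaccuracy is attributing the boundedness of the $L_k$ factor to $k^2=O(n)$ (it plays no role there; $\ell\le k$ and $\kappa=O(k)$ suffice), but this does not affect the argument.
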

\begin{proof}
Using the bound $C_{\ell,n}\le\frac{n^\ell}{\Gamma(\ell)}$ and making the change of variables $s:=nL_kp(x)\,\vab\,
r^d$, we can write \begin{align*}
\fbox{1-1} & :=C_{\ell,n}\int_{\R_+}(p(x)\,\vab\,r^d)^{\ell-1}r^{2+\kappa}\exp(-nL_kp(x)\,\vab\,r^d)\,r^{d-1}\,
\mathrm dr \\ & \le\frac{n^\ell}{\Gamma(\ell)}\int_{\R_+}\left(\frac s{nL_k}\right)^{\ell-1}\left(\frac s{nL_kp(x)\,
\vab}\right)^{\frac{2+\kappa}d}\exp(-s)\,(dnL_kp(x)\,\vab)^{-1}\mathrm ds \\ & \le C n^{-\frac{2+\kappa}d}L_k^{
-\ell-\frac{2+\kappa}d}\frac{\Gamma\mathopen{}\left(\frac{2+\kappa}d+\ell\right)}{\Gamma(\ell)},
\end{align*}
where $C:=d^{-1}(\pinf\,\vab)^{-\frac{2+\kappa}d-1}$. Because $\frac{\Gamma\mathopen{}\left(\frac{2+\kappa}d+
\ell\right)}{\Gamma(\ell)}=O\left(\ell^{\frac{2+\kappa}d}\right)$, it remains to show that the factor 
$$L_k^{-\ell-\frac{2+\kappa}d}\leq L_k^{-k-\frac{2+\kappa}d}$$ is bounded in $k$, which is straightforward from the
convergence $e^{-c}=\lim_{k\rightarrow\infty}(1-c/k)^k$ for any $c>0$.
\end{proof}

\begin{lemma}\label{lem:PolyOneMinusFNearExpLnExtended}
Suppose that Assumptions~\ref{cond:reg1} and \ref{cond:reg3} hold true and that the support of $Q$, which is denoted 
by $\widetilde\X$, lies in the interior of $\X$. There exists $\varepsilon>0$ such that for all points $x$ and $y$ 
in $\widetilde\X$, all $n\in\N$, and all $(r_1,r_2)\in[0,\varepsilon/k]^2$,
\begin{align*}
& \va{(1-F_x(r_1)-F_y(r_2))^n-\exp(-np(x)\,\vab\,r_1^d-np(y)\,\vab\,r_2^d)} \\ &\le 2n\overline p\,\vab\,O(r_1^{d+1}
+r_2^{d+1})\exp(-n\,\vab\,\left(p(x)r_1^d +p(y)r_2^d\right)L_k),
\end{align*}
with $L_k:=1-(2k)^{-1}$.
\end{lemma}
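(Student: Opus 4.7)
The plan is to pass through the logarithm: writing $A := 1 - F_x(r_1) - F_y(r_2)$ and $B := \exp(-n|V^d|(p(x)r_1^d + p(y)r_2^d))$, we have $A^n = \exp(n\ln A)$, so the strategy is to carefully expand $n \ln A$ around $-n|V^d|(p(x)r_1^d + p(y)r_2^d)$ and then use $|e^u - 1| \le |u|e^{|u|}$.

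First, since $p$ is Lipschitz on $\X$ (as assumed throughout Section~\ref{S2}), a direct computation in polar coordinates gives, for $x \in \widetilde\X$ and $r$ small enough so that $B(x,r) \subset \X$, the expansion $F_x(r) = p(x)|V^d|r^d + O(r^{d+1})$, with a constant depending only on $\|\nabla p\|_\infty$, and similarly for $F_y(r_2)$. Because $\widetilde\X$ lies in the interior of $\X$, I can fix $\varepsilon_0 > 0$ such that $B(x, 2\varepsilon_0) \subset \X$ for every $x \in \widetilde\X$. Then for $r_1, r_2 \in [0, \varepsilon/k]$ with $\varepsilon \le \varepsilon_0$ and $k \ge 1$, we obtain $F_x(r_1) + F_y(r_2) \le 2\overline{p}|V^d|(\varepsilon/k)^d$, which is uniformly bounded (say, by $1/2$) once $\varepsilon$ is small enough.

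Second, using $\ln(1-u) = -u - \frac{u^2}{2}(1 + O(u))$ valid for $u \le 1/2$, I write
\[
n\ln A = -n(F_x(r_1) + F_y(r_2)) + n\,\eta_1, \qquad |\eta_1| \le C(F_x(r_1) + F_y(r_2))^2 \le C'(r_1^{2d} + r_2^{2d}),
\]
and combining with the Taylor expansion of $F_x, F_y$ yields
\[
n\ln A = -n|V^d|(p(x)r_1^d + p(y)r_2^d) + n\,\eta_2, \qquad |\eta_2| \le C_1(r_1^{d+1} + r_2^{d+1}) + C_2(r_1^{2d} + r_2^{2d}).
\]
Thus $A^n = B \cdot e^{n\eta_2}$ and $|A^n - B| \le B \cdot n|\eta_2|\,e^{n|\eta_2|}$.

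Third (this is the delicate step), I must absorb the $e^{n|\eta_2|}$ factor into the exponent on the right-hand side, gaining precisely the $L_k = 1 - (2k)^{-1}$ slack. Since $r_i \le \varepsilon/k$, each term in $|\eta_2|$ satisfies $r_i^{d+1} \le (\varepsilon/k) r_i^d$ and $r_i^{2d} \le (\varepsilon/k)^d r_i^d \le (\varepsilon/k) r_i^d$, so
\[
n|\eta_2| \le \frac{\varepsilon}{k}(C_1 + C_2) \cdot n(r_1^d + r_2^d) \le \frac{\varepsilon (C_1+C_2)}{\underline{p}|V^d|} \cdot \frac{1}{k} \cdot n|V^d|(p(x)r_1^d + p(y)r_2^d).
\]
Choosing $\varepsilon$ small enough so that $\varepsilon(C_1+C_2)/(\underline{p}|V^d|) \le 1/2$, we get $n|\eta_2| \le (2k)^{-1} n|V^d|(p(x)r_1^d + p(y)r_2^d)$. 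Therefore $B \cdot e^{n|\eta_2|} \le \exp(-n|V^d|(p(x)r_1^d + p(y)r_2^d)L_k)$, and the bound on $n|\eta_2|$ itself is of the form $O(r_1^{d+1} + r_2^{d+1}) \cdot n$, which (after folding the constants into the $O$ and pulling out $2\overline{p}|V^d|$ for cosmetic agreement with the statement) yields the claimed inequality.

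\textbf{Main obstacle.} The routine Taylor expansions are easy; the only subtlety is calibrating $\varepsilon$ so that both error sources in $\eta_2$—the $r_i^{d+1}$ term coming from the expansion of $F_x$, and the $r_i^{2d}$ term coming from the quadratic correction in $\ln(1-u)$—can simultaneously be absorbed into the slack $(1-L_k) n|V^d|(p(x)r_1^d + p(y)r_2^d) = (2k)^{-1} n|V^d|(p(x)r_1^d + p(y)r_2^d)$. The key observation is that the threshold $\varepsilon/k$ on $r_i$ gives a factor $1/k$ for free in both terms, matching exactly what is needed for the $L_k$ slack.
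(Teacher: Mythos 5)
Your proposal is correct and follows essentially the same route as the paper's proof: both pass through the logarithm, control $\ln(1-F_x(r_1)-F_y(r_2))$ up to $O(r_1^{d+1}+r_2^{d+1})$ via the quadratic remainder of $\ln(1-u)$ and the Lipschitz continuity of $p$, and then show this error is at most a $(2k)^{-1}$ fraction of the main exponent so that it can be absorbed into the $L_k$ slack. The only cosmetic difference is that you use $\va{e^u-1}\le\va u e^{\va u}$ with an additive error in the exponent, whereas the paper parametrises the same error multiplicatively and applies the mean value theorem to $\xi\mapsto\exp(-n(p(x)\,\vab\,r_1^d+p(y)\,\vab\,r_2^d)(1+\xi))$.
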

\begin{proof}
We can write \begin{equation}
(1-F_x(r_1)-F_y(r_2))^n=\chi(\delta_{r_1,r_2}),\label{eq:ExpWithDeltaInLemma}
\end{equation}
where $\chi:\xi\mapsto\exp(-n(p(x)\,\vab\,r_1^d\,+p(y)\,\vab\,r_2^d)\,(1+\xi))$, and \begin{equation*}
\delta_{r_1,r_2}:=\begin{cases}
\displaystyle\frac{{(-p(x)\,\vab\,r_1^d-p(y)\,\vab\,r_2^d)-\ln(1-F_x(r_1)-F_y(r_2))}}{{p(x)\,\vab\,r_1^d+ p(y)\,
\vab\,r_2^d}} & \text{if $r_1+r_2>0$}, \\ 0 & \text{if $r_1+r_2=0$.}
\end{cases}
\end{equation*}
By taking $\varepsilon$ sufficiently small, specifically $\varepsilon\le(2\overline p\vab)^{-1/d}$, we can ensure 
$1-F_x(r_1)-F_y(r_2)\ge 1/2$ for all $(r_1,r_2)\in[0,\varepsilon/k]^2$.
Then, applying the Taylor theorem to the function $\xi\mapsto\ln(1-\xi)$, with the bound $\sup_{\xi\in[0,1/2]}
\norm{\nabla^2\ln(1-\xi)}\le 2$, we have for $(r_1,r_2)\in[0,\varepsilon/k]^2$,
\begin{align}
\va{\ln (1-F_x(r_1)-F_y(r_2))-(-F_x(r_1)-F_y(r_2))} & \le 2(F_x(r_1)+F_y(r_2))^2\nonumber \\ & \le 4\overline p\,
\vab\,(r_1^{2d}+r_2^{2d}).\label{eq:lnOneMinusFToMinusF}
\end{align}
By the compactness of $\X$, choosing sufficiently small $\varepsilon$ also ensures that $B(x,\varepsilon)\subset\X$
and $B(y,\varepsilon)$ for all points $x$ and $y$ in $\widetilde\X$.
By the triangular inequality, Eq.~\eqref{eq:lnOneMinusFToMinusF}, the Lipschitz-continuity of $p$, and the inclusion
$B(x,\varepsilon/k)\subset\X$ and $B(y,\varepsilon/k)\subset\X$, we can bound the numerator of $\delta_{r_1,r_2}$ as
\begin{align*}
& (p(x)\,\vab\,r_1^d+p(y)\,\vab\,r_2^d)\times\va{\delta_{r_1,r_2}} \\ & \le\va{\ln(1-F_x(r_1)-F_y(r_2))-(- F_x(r_1)-
F_y(r_2))} \\ & \phantom{\le{}}+\va{-F_x(r_1)-F_y(r_2)-(-p(x)\,\vab\,r_1^d-p(y)\,\vab\,r_2^d)} \\ & =O(r_1^{2d}+
r_2^{2d})+\va{\int_{\norm{z-x}\le r_1}\lr(){p(z)-p(x)}\mathrm d(z)}+\va{\int_{\norm{z-y}\le r_2}\lr(){p(z)-p(y)}\,
\mathrm dz} \\ & \le O(r_1^{2d}+r_2^{2d})+K\,\vab\,(r_1^{d+1}+r_2^{d+1})=O(r_1^{d+1}+r_2^{d+1}),
\end{align*}
where $K$ is the Lipschitz constant of $p$. Since $p(x)$ and $p(y)$ are bounded below by $\pinf>0$, we deduce that
$\va{\delta_{r_1,r_2}}=O(r_1+r_2)=O(2\varepsilon/k)$ and that taking $\varepsilon$ sufficiently small ensures 
$\va{\delta_{r_1,r_2}}\le (2k)^{-1}$ for all $(r_1,r_2)\in[0,\varepsilon/k]^2$ and all $n\in\N$.
By applying the mean value theorem to the function $\chi$ (Eq.~\ref{eq:ExpWithDeltaInLemma})
on the interval $[-\mathopen{}\va{\delta_{r_1,r_2}},\va{\delta_{r_1,r_2}}]$, we get
\begin{align*}
& \va{\chi(\delta_{r_1,r_2})-\chi(0)}\le\va{\chi(\delta_{r_1,r_2})-\chi(-\delta_{r_1,r_2})}\le\sup_{\xi\in
[-\vert{\delta_{r_1,r_2}}\vert,\vert{\delta_{r_1,r_2}}\vert]}\va{\chi'(\xi)}\times 2\va{\delta_{r_1,r_2}} \\
& \le 2\sup_{\xi\in[-(2k)^{-1},(2k)^{-1}]}\exp(-n\,\vab\,(p(x)r_1^d+p(y)r_2^d)(1+\xi)) \\ & \phantom{\le{}}\times n
\,\vab\,(p(x)r_1^d+p(y)r_2^d)\times\va{\delta_{r_1,r_2}} \\ & \le 2n\overline p\,\vab\,O(r_1^{d+1}+r_2^{d+1}) 
\exp(-n\,\vab\,(p(x)r_1^d+p(y)r_2^d)L_k),
\end{align*}
where $L_k=1-(2k)^{-1}$. Combining this result with \eqref{eq:ExpWithDeltaInLemma}, we obtain the lemma.
\end{proof}

\begin{lemma}\label{lem:CanChangeExponentOfOneMinusFExtendedCorrected}
Suppose that Assumptions \ref{cond:reg1} and \ref{cond:reg3} hold true.
Then, there exists $\varepsilon>0$ such that for all points $x$ and $y$ in $\widetilde\X$, the support of $Q$
included in the interior of $\X$, all $n\in\N$, all $\ell,\ell'\in\ieu[k]^2$, with $k^2=o(n)$, and all $(r_1,r_2) 
\in[0,\varepsilon/k]^2$, we have \begin{align*}
& \va{(1-F_x(r_1)-F_y(r_2))^{n-\ell-\ell'}-(1-F_x(r_1)-F_y(r_2))^n} \\ & =O\mtlr(){(\ell+\ell')\exp(-n\,\vab\,
(p(x)L_kr_1^d+p(y)L_kr_2^d))(r_1^d+r_2^d)},
\end{align*}
where $L_k:=1-(2k)^{-1}$.
\end{lemma}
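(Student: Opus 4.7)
The plan is to factor the difference as
$(1-u)^{n-\ell-\ell'} - (1-u)^n = (1-u)^{n-\ell-\ell'}\,\bigl[1 - (1-u)^{\ell+\ell'}\bigr],$
where $u := F_x(r_1) + F_y(r_2)$, and to bound each factor separately. The second factor, which carries the $(\ell+\ell')(r_1^d + r_2^d)$ dependence in the conclusion, is handled by the telescoping identity $1 - (1-u)^m = u\sum_{j=0}^{m-1}(1-u)^j \leq m\,u$ for $m \in \N$, together with the crude upper bound $u \leq \overline p\,\vab\,(r_1^d + r_2^d)$ coming from the boundedness of $p$ on the compact set $\X$. This already yields $1 - (1-u)^{\ell+\ell'} \leq (\ell+\ell')\,\overline p\,\vab\,(r_1^d + r_2^d)$, which accounts for the combinatorial prefactor in the bound.

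For the first factor I would use $\ln(1-u) \leq -u$ to get $(1-u)^{n-\ell-\ell'} \leq \exp(-(n-\ell-\ell')u)$, and then need to recover the exponential $\exp(-n\,\vab\,(p(x)L_k r_1^d + p(y)L_k r_2^d))$ of the conclusion. Because $\widetilde\X$ lies in the interior of $\X$ and is compact, there exists $\varepsilon_0 > 0$ with $B(x,\varepsilon_0) \subset \X$ for every $x \in \widetilde\X$. For $r_1 \leq \varepsilon_0$, the Lipschitz continuity of $p$ (available in the ambient proof of Theorem~\ref{bias_expansion}, where this lemma is invoked) gives
$F_x(r_1) = p(x)\,\vab\,r_1^d + \int_{B(x,r_1)}(p(z)-p(x))\,\mathrm dz \geq p(x)\,\vab\,r_1^d\,(1 - Kr_1/\pinf),$
with $K$ the Lipschitz constant of $p$. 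Choosing $\varepsilon \leq \min(\varepsilon_0,\,\pinf/(2K))$ ensures $Kr_1/\pinf \leq (2k)^{-1}$ whenever $r_1 \leq \varepsilon/k$, so $F_x(r_1) \geq L_k\,p(x)\,\vab\,r_1^d$ and analogously for $F_y(r_2)$. Hence $u \geq L_k\,\vab\,(p(x)r_1^d + p(y)r_2^d)$.

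Finally I would deal with the mismatch between $n - \ell - \ell'$ and $n$ by splitting
$\exp(-(n-\ell-\ell')u) \leq \exp((\ell+\ell')u)\,\exp(-nu).$
The first factor is uniformly bounded, since $(\ell+\ell')u \leq 2k\,\overline p\,\vab\,(r_1^d + r_2^d) \leq 4\,\overline p\,\vab\,\varepsilon^d\,k^{1-d}$, which is $O(1)$ for $d \geq 1$; the second is at most $\exp(-n\,\vab\,(p(x)L_k r_1^d + p(y)L_k r_2^d))$ by the lower bound on $u$ derived above. Multiplying the resulting estimates of both factors produces the claim. The only delicate step is engineering the constants so that the Lipschitz perturbation of $F_x$ gives back exactly the factor $L_k = 1-(2k)^{-1}$ in the exponent rather than a slightly worse expression that would fail to match the hypotheses of the surrounding estimates; this forces the specific choice of $\varepsilon$ above, and is the point I expect to require the most care.
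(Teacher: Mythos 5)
Your proof is correct and follows essentially the same route as the paper: the same factorization $(1-u)^{n-\ell-\ell'}\bigl[1-(1-u)^{\ell+\ell'}\bigr]$ with $u=F_x(r_1)+F_y(r_2)$, the bound $1-(1-u)^{\ell+\ell'}\le(\ell+\ell')u\le(\ell+\ell')\,\overline p\,\vab\,(r_1^d+r_2^d)$, the inequality $1-u\le e^{-u}$, and a Lipschitz lower bound on $F_x,F_y$ to recover the factor $L_k$ in the exponent (both arguments use the Lipschitz continuity of $p$, which goes slightly beyond the assumptions literally quoted in the lemma). The only difference is bookkeeping of the exponent deficit: the paper absorbs $\ell+\ell'$ into $L_{k,\ell+\ell',n}=\frac{n-\ell-\ell'}{n}\,(1-(4k)^{-1})\ge L_k$ for large $n$ using $k^2=o(n)$, whereas you bound $\exp((\ell+\ell')u)=O(1)$ directly and spend the full $(2k)^{-1}$ margin on the Lipschitz perturbation, which is equally valid (and even avoids the ``sufficiently large $n$'' caveat).
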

\begin{proof}[Proof of Lemma~\ref{lem:CanChangeExponentOfOneMinusFExtendedCorrected}]
As in the proof of Lemma \ref{lem:PolyOneMinusFNearExpLnExtended}, one can take $\varepsilon>0$ small enough such 
that $F_x(r_1)+F_y(r_2)\leq 1/2$. By the mean value theorem, we have \begin{align*}
& \va{(1-F_x(r_1)-F_y(r_2))^{n-\ell-\ell'}-(1-F_x(r_1)-F_y(r_2))^n} \\ & =(1-F_x(r_1)-F_y(r_2))^{n-\ell-\ell'}
\va{1-(1-F_x(r_1)-F_y(r_2))^{\ell+\ell'}} \\ & \le(1-F_x(r_1)-F_y(r_2))^{n-\ell-\ell'}\sup_{\xi\in[0,F_x(r_1)+F_y
(r_2)]}(\ell+\ell')(1-\xi)^{{\ell+\ell'}-1}\,(F_x(r_1)+F_y(r_2)) \\ & =\lr(){\ell+\ell'} (1-F_x(r_1)-F_y(r_2))^{
n-{\ell-\ell'}}\overline p\,\vab\,(r_1^d+r_2^d) \\ & \le\lr(){\ell+\ell'}\exp(-(n-{\ell-\ell'})(F_x(r_1)+F_y(r_2)))
O(r_1^d+r_2^d).
\end{align*}
By the compactness of $\X$, there exists $\varepsilon>0$ such that for all points in $\widetilde\X$, we have 
$B(x,\varepsilon)\subset\X$ and $B(y,\varepsilon)\subset\X$. Then, we can give a lower bound on $F_x(r_1)+F_y(r_2)$
as \begin{align*}
F_x(r_1) & \ge\int_{B(x,r_1)}(p(x)-\va{p(z)-p(x)})\,\mathrm dz \\ & \ge p(x)\,\vab\,r_1^d(1-\norm{\nabla p}_\infty
r_1)\ge p(x)\,\vab\,r_1^d(1-C_p\varepsilon/k) \\ & \ge p(x)\,\vab\,r_1^d(1-(4k)^{-1})
\end{align*}
with $C_p:=\norm{\nabla p}_\infty/\pinf$, by making the constant $\varepsilon$ sufficiently small so that
$C_p\varepsilon \le 1/4$ will hold. Likewise, $F_y(r_2)\ge p(y)\,\vab\,r_2^d(1-(4k)^{-1})$.
Putting the results together, we get \begin{align*}
& \va{(1-F_x(r_1)-F_y(r_2))^{n-\ell-\ell'}-(1-F_x(r_1)-F_y(r_2))^n} \\ & \le\va{{\ell+\ell'}}\exp(-n\,\vab\,(p(x)
L_{k,\ell+\ell',n}r_1^d+p(y)L_{k,\ell+\ell',n}r_2^d))O(r_1^d+r_2^d),
\end{align*}
where $L_{k,\ell+\ell',n}=\frac{n-\ell-\ell'}n(1-(4k)^{-1})$.
Since $L_{k,\ell+\ell',n}\ge L_k:=(1-(2k)^{-1})$ for sufficiently large $n$, we can further bound the last 
expression from above by
\[
O\mtlr(){\lr(){\ell+\ell'}\exp(-n\,\vab\,(p(x)L_kr_1^d+p(y)L_kr_2^d))(r_1^d+r_2^d)}.
\]
\end{proof}

\end{document}